\documentclass[12pt]{amsart}

\usepackage{amsmath, amssymb, amscd, newlfont}

%
%
\setlength{\oddsidemargin}{0in}
\setlength{\evensidemargin}{0in}
\setlength{\textwidth}{6.5in}
\setlength{\textheight}{8.5in}

\newcommand{\bbQ}{{\mathbb{Q}}}

\newcommand{\bbP}{{\mathbb{P}}}

\newcommand{\bbR}{{\mathbb{R}}}
\newcommand{\bbZ}{{\mathbb{Z}}}

\newcommand{\ord}{{\mathrm{ord}}}

\newcommand{\supp}{{\mathrm{supp}}}

\numberwithin{equation}{section}

\newtheorem{Lem}[equation]{Lemma}

\newtheorem{Thm}[equation] {Theorem}
\newtheorem{Cor}[equation]{Corollary}


\title
[Models  of $X_0(N)$]
{Integral Models   of  $X_0(N)$ and Their Degrees}

\author{Goran Mui\'c }
\address{
Department of Mathematics, 
University of Zagreb,
Bijeni\v cka 30, 10000 Zagreb,
Croatia}
 \email{gmuic@math.hr}

\begin{document}
\begin{abstract}
In this paper we compute the degree of a curve which is the image of a mapping 
$z\longmapsto (f(z): g(z): h(z))$ constructed 
out of three linearly independent modular forms of the same even weight $\ge 4$ into $\mathbb P^2$. 
We  prove that in most cases this map is a birational equivalence and defined over $\mathbb Z$. 
We use this to construct models of $X_0(N)$,
$N\ge 2$,  using modular forms in $M_{12}(\Gamma_0(N))$ with integral $q$--expansion. The models have degree equal to 
$\psi(N)$ (a classical Dedekind psi function). When genus is at least one, we show the existence of 
models constructed using cuspidal forms in $S_4(\Gamma_0(N))$ of degree $\le \psi(N)/3$
and in $S_6(\Gamma_0(11))$ of degree $4$.
As an example of a different kind, we compute the  formula for the total degree i.e., the degree considered as a 
polynomial 
of two (independent) variables of the classical modular polynomial (or the degree of the canonical model of 
$X_0(N)$).
\end{abstract}
\subjclass[2000]{11F11, 11F23}
\keywords{modular forms,  modular curves, birational equivalence, modular polynomial}
\maketitle

\maketitle
\section{Introduction}

We start this paper by a classical example in order to motivate further results.
Let $N\ge 2$. The modular curves $X_0(N)$ have canonical plane models constructed by Hauptmoduln $j$ \cite{shi}. 
More precisely, its function field over $\mathbb C$ is generated by $j$ and 
$j(N\cdot )$. The classical modular polynomial $\Phi_N\in \mathbb Z[x, y]$ is 
the minimal polynomial of $j(N\cdot )$ over $\mathbb C(j)$. It is symmetric in $x$ and $y$ i.e.,
$\Phi_N(x, y)=\Phi_N(y, x)$ and it has a degree in $x$ or $y$ equal to the Dedekind psi function
$$
\psi(N)=N\prod_{p |N} \left(1+ \frac{1}{p}\right).
$$
It is also irreducible as a polynomial in $x$ as an element of 
$\mathbb C(y)[x]$ or in $y$  as an element of 
$\mathbb C(x)[y] $. But then it is irreducible in $\mathbb C[x, y]$. 
This polynomial is rather mysterious and difficult to compute \cite{BKS}.  
A classical result is a formula for the degree of $\Phi_N(x, x)$ (see \cite{cox}, Proposition 13.8):
$$
2\sum_{\substack{k|N\\N\ge k> \sqrt{N}}}\frac{k}{(k, N/k)}\varphi((k, N/k))+\varphi(\sqrt{N}),
$$
where where $\varphi$ is the Euler function, and $(\ , \ )$ denotes the greatest common 
divisor. We let $\varphi(\sqrt{N})=0$ if $N$ is not a perfect square.

As a simple consequence of our general result (see Theorem \ref{ithm-2} below), we prove the following formula for the degree 
of $\Phi_N$ considered as a polynomial of two variables $x$ and $y$:

\begin{Thm}\label{ithm} Let $N\ge 2$. Then, the total degree of $\Phi_N(x, y)$ is equal to the degree of 
 $\Phi_N(x, x)$.
\end{Thm}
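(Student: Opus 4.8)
The plan is to reduce the asserted equality, by elementary projective geometry, to the single statement that the projective modular curve misses one point at infinity, and then to verify that by a local computation at the cusps of $X_0(N)$. Let $d$ denote the total degree of $\Phi_N$ and let $C=\{\Phi_N^h=0\}\subset\bbP^2$ be the projective plane curve defined by the homogenization; then $\deg C=d$, and $C$ is the closure of the image of $z\longmapsto(j(z):j(Nz):1)$ (this is the model whose degree Theorem~\ref{ithm-2} computes). Since $j$ and $j(N\cdot)$ generate the function field of $X_0(N)$, the induced map $X_0(N)\to C$ is birational, i.e.\ the normalization, and since $\Phi_N$ is irreducible of degree $\psi(N)\ge3$ in each variable it is not divisible by $x-y$, so the diagonal line $L=\{X=Y\}$ is not a component of $C$. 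By B\'ezout, $\sum_{P\in C\cap L}i_P(C,L)=d$. The affine points of $C\cap L$ are the $(x_0,x_0)$ with $\Phi_N(x_0,x_0)=0$, and for each of them $i_{(x_0,x_0)}(C,L)=\ord_{x_0}\Phi_N(x,x)$ (an intersection multiplicity with a line is the order of vanishing of the restricted equation); summing over $x_0$ these contribute exactly $\deg\Phi_N(x,x)$. Hence
$$d=\deg\Phi_N(x,x)+i_{[1:1:0]}(C,L),$$
so the theorem is \emph{equivalent} to the assertion that $C$ does not pass through $[1:1:0]$.

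Next I would determine $C\cap\{Z=0\}$. The function $j$ has a pole at every cusp of $X_0(N)$, and $j(Nz)=j(-1/(Nz))$, being $j$ composed with the Fricke involution, likewise has a pole at every cusp, while both are finite away from the cusps; therefore the points of $C$ on the line at infinity are precisely the images of the cusps. At a cusp $s$ of denominator $c\mid N$ the pole order of $j$ is the width $h_c=N/(c^2,N)$ and that of $j(Nz)$ is $h_{N/c}$, so $s$ maps to $[1:0:0]$ if $h_c>h_{N/c}$, to $[0:1:0]$ if $h_c<h_{N/c}$, and to $[c_1:c_2:0]$ if $h_c=h_{N/c}$, where $c_1,c_2$ are the leading Laurent coefficients of $j$ and $j(Nz)$ at $s$. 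A short comparison of $p$-adic valuations shows that $h_c=h_{N/c}$ forces $N$ to be a perfect square and $c=\sqrt N$ (and then $h_c=1$). Thus the only cusps that could conceivably map to $[1:1:0]$ are the $\varphi(\sqrt N)$ cusps $s=a/\sqrt N$.

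For such an $s$ choose $\sigma_s=\left(\begin{smallmatrix}a&b\\ \sqrt N&d\end{smallmatrix}\right)\in\mathrm{SL}_2(\bbZ)$, so that in the local parameter $q_s=e^{2\pi i w}$ at $s$ one has $j(z)=j(\sigma_s w)=j(w)=q_s^{-1}+744+\cdots$, whence $c_1=1$; and a direct matrix manipulation gives $N\sigma_s(w)=\gamma(w+d/\sqrt N)$ for some $\gamma\in\mathrm{SL}_2(\bbZ)$, so $j(Nz)=j(w+d/\sqrt N)=e^{-2\pi i d/\sqrt N}q_s^{-1}+744+\cdots$, whence $c_2=e^{-2\pi i d/\sqrt N}$. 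Since $ad-b\sqrt N=1$ forces $(d,\sqrt N)=1$ and $\sqrt N\ge2$, we get $c_2\ne1=c_1$, so $s$ maps to $[1:c_2:0]\ne[1:1:0]$. Hence no cusp of $X_0(N)$ maps to $[1:1:0]$, so $[1:1:0]\notin C$, and the displayed identity gives $\deg\Phi_N(x,x)=d$, the total degree of $\Phi_N$. The main obstacle is exactly this last step: at every cusp of denominator $\ne\sqrt N$ the image is read off from a crude pole-order comparison, but at the cusps of denominator $\sqrt N$ the two functions $j$ and $j(N\cdot)$ have poles of the same order and one must genuinely track the leading coefficients and check they differ; everything else is formal.
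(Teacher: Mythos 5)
Your proof is correct, but it follows a genuinely different route from the paper's. The paper treats Theorem \ref{ithm} as a corollary of its general degree formula (Theorem \ref{ithm-2}): it applies that formula to the weight--$24$ model $(\Delta\Delta(N\cdot):E_4^3\Delta(N\cdot):E_4^3(N\cdot)\Delta)$, evaluates the correction term $\sum_{\mathfrak a}\min(\cdots)$ from the explicit divisors of $\Delta$, $\Delta(N\cdot)$, $E_4^3$, $E_4^3(N\cdot)$ at the cusps and elliptic points, and then matches the resulting closed expression against the classical formula for $\deg\Phi_N(x,x)$ quoted from Cox. You instead bypass both the general theorem and Cox's formula: B\'ezout against the diagonal line $L=\{X=Y\}$ converts the claim into the single geometric assertion $[1:1:0]\notin C$, and you verify that by locating $C\cap\{Z=0\}$ at the images of the cusps and comparing pole orders (and, at the $\varphi(\sqrt N)$ cusps of denominator $\sqrt N$ where the pole orders of $j$ and $j(N\cdot)$ coincide, the leading coefficients $1$ and $e^{-2\pi i d/\sqrt N}\ne 1$). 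I checked the local computation: the pole orders $N/(c^2,N)$ and $c^2/(c^2,N)$ agree only for $N=c^2$, and the leading coefficient of $j(N\cdot)$ at $a/\sqrt N$ is indeed a nontrivial $\sqrt N$-th root of unity since $(d,\sqrt N)=1$. What each approach buys: yours is self-contained and more elementary (it needs neither the intersection-theoretic degree formula nor the known value of $\deg\Phi_N(x,x)$, and in fact, combined with the paper's computation of the total degree, it would \emph{reprove} Cox's Proposition 13.8); the paper's version is weaker as a standalone argument but serves its purpose of illustrating how Theorem \ref{ithm-2} produces such degree identities mechanically from divisor data.
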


Having in mind the shapes of polynomials $\Phi_N(x, y)$ (say, computed using MAGMA computer system), this result is not 
very surprising. It is quite likely that classical methods \cite{cox} would give the proof of Theorem \ref{ithm} 
also. Our approach to this example is from a very general theorem (see Theorem \ref{ithm-2} below) which can be used in 
many other cases as we explain in the paper. It would also be interesting to apply the approach to other known cases. 

\vskip.2in
Let us explain the proof of Theorem \ref{ithm}.  Let 
$$
E_4(z)=1+240 \sum_{n=1}^\infty \sigma_3(n)q^n$$
 be the usual Eisenstein series, 
and let 
$$
\Delta(z)=q+\sum_{n=2}\tau(n)q^n
$$
 be the Ramanujan delta function. Then,   $j=E^3_4/\Delta$. 
Since  $E^3_4(N\cdot ), \Delta (N\cdot ) \in M_{12}(\Gamma_0(N))$,
the fact that the function field over $\mathbb C$ is generated by  $j$ and 
$j(N\cdot )$ means that the holomorphic map

\begin{equation}\label{mmap}
\mathfrak a_z\mapsto (1: j(z): j(Nz))=
(\Delta(z)\Delta(Nz): E^3_4(z)\Delta(Nz): E^3_4(Nz) \Delta(z))
\end{equation}
is a birational uniformization by $S_{24}(\Gamma_0(N))$.  The image, say $\cal C$, is an irreducible 
projective curve (Lemma  \ref{p2-1}). Since $\Phi_n$ is irreducible in $\mathbb C[x, y]$, the  homogenization
$$
x_0^{\deg{\Phi_N}}\Phi(x_1/x_0, x_2/x_0)
$$
is an irreducible homogeneous polynomial of total degree $\deg{\Phi_N}$. Its locus is $\cal C$.  
By definition, the degree of $\cal C$ is the total degree of this irreducible polynomial. 
 A classical geometric interpretation of the degree of 
$\cal C$  is the number of intersection of $\cal C$ with a generic line $l\subset \mathbb P^2$. 
We use this fact along with the following general Theorem \ref{ithm-2} to prove  Theorem \ref{ithm}.

We start more generally from any Fuchsian group of the first kind $\Gamma$ (see \cite{Miyake}).
For an even integer $m\ge 4$,  we denote the space cuspidal forms by $S_m(\Gamma)$ and the space of all 
modular forms by $M_m(\Gamma)$  of weight $m$. Let $g(\Gamma)$ be the genus of $\mathfrak R_\Gamma$. 

We select three linearly independent modular forms $f, g$, and $h$ in $M_m(\Gamma)$, and  
construct the holomorphic map 
$\mathfrak R_\Gamma\longrightarrow \Bbb P^{2}$ given by
$$
\mathfrak a_z\longmapsto (f(z):  g(z): h(z)).
$$
The image is an irreducible projective plane curve   which we denote by 
$\cal C(f, g, h)$ of degree $\le \dim M_m(\Gamma)+g(\Gamma)-1$. 
The details about this map can be found in the proof of 
Lemma  \ref{p2-1}. If the curve $\cal C(f, g, h)$ has no singularities, 
then it is itself a compact Riemann surface, and one can define the degree $\deg{(\varphi)}$ of the map $\varphi$ 
as usual (a number of preimages of a point counted with multiplicities). But $\cal C(f, g, h)$ is rarely non-singular, 
and one needs to modify this standard definition (in the theory of compact Riemann surfaces) 
in order get the correct definition of $\deg{(\varphi)}$. This can be extracted from the general intersection theory 
\cite{fulton} (and it is known since 19th century in many forms, 
analytic and algebraic, proved to be equivalent by Fulton), 
but we want to restrict ourselves to more elementary tools so  we supply  short and simple direct 
argument (see Lemma \ref{p2-3}) for the benefit of the reader who  might not be very versed in the Intersection theory. 
Here, $\deg{(\varphi)}$ is the number of  
preimages of a non--singular point  counted with multiplicities. 
This definition is the one that behaves well as we 
explain in the proof of the main theorem  (see Lemma \ref{p2-6}). 
For $f\in M_m(\Gamma)$, $f\neq 0$, we attach an integral effective divisor  
$\mathfrak c'_{f}$ by subtracting from its rational effective divisor $\text{div}{(f)}$ necessary contribution at 
elliptic points (see Lemma \ref{prelim-1} (vi)). The degree of this divisor is $\dim M_m(\Gamma)+ g(\Gamma)-1$. 
In addition, if $f\in S_m(\Gamma)$, then we  subtract necessary contribution from $\mathfrak c'_{f}$ at cusps, and get a 
divisor  $\mathfrak c_{f}$ (see (\ref{2div})) which has degree  $\dim S_m(\Gamma)+ g(\Gamma)-1$.
The first main result of the paper is the following theorem (see Theorem \ref{p2-120}).

\begin{Thm}\label{ithm-2} Assume that  $m\ge 4$ is an even integer such that 
$\dim M_m(\Gamma)\ge 3$. Let $f, g, h\in M_m(\Gamma)$ be linearly independent.
Then, we have the following:
$$
\deg{(\varphi)}\cdot \deg{C(f, g, h)}=\dim M_m(\Gamma) + g(\Gamma) -1 - \sum_{\mathfrak a\in \mathfrak R_\Gamma} 
\min{\left(\mathfrak c'_{f}(\mathfrak a),  \mathfrak c'_{g}(\mathfrak a), 
\mathfrak c'_{h}(\mathfrak a) \right)}.
$$
Moreover, if $f, g, h\in S_m(\Gamma)$, then 
$$
\deg{(\varphi)}\cdot \deg{C(f, g, h)}=\dim S_m(\Gamma) + g(\Gamma) -1 - \sum_{\mathfrak a\in \mathfrak R_\Gamma} 
\min{\left(\mathfrak c_{f}(\mathfrak a),  \mathfrak c_{g}(\mathfrak a), 
\mathfrak c_{h}(\mathfrak a) \right)}.
$$
In particular, if $\varphi$ is birational, then $\deg{(\varphi)}=1$ (since it is generically injective), and we have 
in either case a simple formula for the  degree of $\deg{C(f, g, h)}$ in terms of $f, g$, and $h$.
\end{Thm}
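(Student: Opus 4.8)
The plan is to evaluate the degree of the pullback divisor $\varphi^{*}\ell$ on $\mathfrak R_\Gamma$, where $\ell\subset\mathbb P^2$ is a generic line, in two different ways and then compare. I would first recall from Lemma~\ref{p2-1} and Lemma~\ref{prelim-1}(vi) that $\varphi$ is the morphism $\mathfrak R_\Gamma\to C(f,g,h)$ obtained from $(f:g:h)$ by dividing out the common zeros, and that for every nonzero $\phi\in M_m(\Gamma)$ the divisor $\mathfrak c'_{\phi}$ is an effective integral divisor on the compact Riemann surface $\mathfrak R_\Gamma$ whose degree $\dim M_m(\Gamma)+g(\Gamma)-1$ does not depend on $\phi$. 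Put
$$
B=\sum_{\mathfrak a\in\mathfrak R_\Gamma}\min\left(\mathfrak c'_{f}(\mathfrak a),\,\mathfrak c'_{g}(\mathfrak a),\,\mathfrak c'_{h}(\mathfrak a)\right)\cdot\mathfrak a .
$$
The local point to check is that $B$ is exactly the fixed part of the linear system spanned by $f,g,h$: over a point $\mathfrak a$ choose a coordinate $\tau$ at a point of $\mathbb H$ above $\mathfrak a$ and trivialize weight-$m$ forms there; then the germs of $f,g,h$ have vanishing orders of the form $e\,\mathfrak c'_{\bullet}(\mathfrak a)+r$, with $e$ the ramification index over $\mathfrak a$ and $r=r(\mathfrak a,m)$ a residue class depending only on $\mathfrak a$ and $m$ — this is precisely what the elliptic correction built into $\mathfrak c'$ accomplishes. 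Hence $\varphi=(f:g:h)=(\tau^{-\mu}f:\tau^{-\mu}g:\tau^{-\mu}h)$ with $\mu=e\min(\mathfrak c'_f,\mathfrak c'_g,\mathfrak c'_h)(\mathfrak a)+r$, and at least one of the new germs is a unit at $\mathfrak a$. For $\ell\colon\alpha x_0+\beta x_1+\gamma x_2=0$ with $(\alpha,\beta,\gamma)\ne 0$, the form $\phi=\alpha f+\beta g+\gamma h$ is a nonzero element of $M_m(\Gamma)$ by linear independence, and pulling back the equation of $\ell$ through this local description gives the global identity $\varphi^{*}\ell=\mathfrak c'_{\phi}-B$; taking degrees,
$$
\deg\left(\varphi^{*}\ell\right)=\dim M_m(\Gamma)+g(\Gamma)-1-\sum_{\mathfrak a\in\mathfrak R_\Gamma}\min\left(\mathfrak c'_{f}(\mathfrak a),\,\mathfrak c'_{g}(\mathfrak a),\,\mathfrak c'_{h}(\mathfrak a)\right),
$$
which is the claimed right-hand side.

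Next I would match $\deg(\varphi^{*}\ell)$ with $\deg(\varphi)\cdot\deg C(f,g,h)$. For generic $\ell$ the intersection $C(f,g,h)\cap\ell$ consists of $\deg C(f,g,h)$ distinct non-singular points of $C(f,g,h)$ and avoids the finitely many exceptional points (singularities of $C(f,g,h)$, images of the ramification of $\varphi$); then by the definition of $\deg(\varphi)$ as the number of preimages of a non-singular point counted with multiplicities, together with the good behaviour of that count — Lemmas~\ref{p2-3} and~\ref{p2-6} — each of those $\deg C(f,g,h)$ points contributes exactly $\deg(\varphi)$ to $\deg(\varphi^{*}\ell)$. (Equivalently, $\varphi$ factors through the normalization of $C(f,g,h)$ and $\deg(\varphi^{*}\mathcal{O}(1))=\deg(\varphi)\cdot\deg C(f,g,h)$ for formal reasons independent of $\ell$; I would invoke whichever form these lemmas make available.) Comparing the two evaluations of $\deg(\varphi^{*}\ell)$ yields the first formula.

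For $f,g,h\in S_m(\Gamma)$ the argument runs the same way with $\mathfrak c'$ replaced by $\mathfrak c$. Now $\phi=\alpha f+\beta g+\gamma h\in S_m(\Gamma)$, and $\mathfrak c'_{\psi}-\mathfrak c_{\psi}=\mathfrak d$ is one and the same effective cusp divisor for every nonzero cusp form $\psi$; since subtracting $\mathfrak d$ from each of $\mathfrak c'_f,\mathfrak c'_g,\mathfrak c'_h$ commutes with taking the minimum, $B=\mathfrak d+\sum_{\mathfrak a}\min(\mathfrak c_f,\mathfrak c_g,\mathfrak c_h)(\mathfrak a)\cdot\mathfrak a$, and therefore $\varphi^{*}\ell=\mathfrak c'_{\phi}-B=\mathfrak c_{\phi}-\sum_{\mathfrak a}\min(\mathfrak c_f,\mathfrak c_g,\mathfrak c_h)(\mathfrak a)\cdot\mathfrak a$. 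Taking degrees and using $\deg\mathfrak c_{\phi}=\dim S_m(\Gamma)+g(\Gamma)-1$ gives the second formula. Finally, a birational $\varphi$ is generically injective, hence of degree one, so in that case both identities become direct formulas for $\deg C(f,g,h)$.

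The main obstacle is the second step: turning ``divisor of zeros of $\alpha f+\beta g+\gamma h$ with its fixed part removed'' into ``$\deg(\varphi)\cdot\deg C(f,g,h)$'' in an elementary yet rigorous way, i.e.\ controlling the intersection multiplicities of a generic line with a possibly highly singular plane curve and matching them point-by-point above $C(f,g,h)\cap\ell$ with the local preimage counts of $\varphi$. This is exactly what Lemmas~\ref{p2-3} and~\ref{p2-6} are designed to supply, so once they are in hand the proof reduces to the local computation of $\varphi^{*}\ell$ above, whose only delicate feature is the elliptic-point bookkeeping that motivates using $\mathfrak c'$ rather than $\text{div}$.
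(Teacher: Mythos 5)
Your proposal is correct and follows essentially the same route as the paper: the identity $\varphi^{*}\ell=\mathfrak c'_{\alpha f+\beta g+\gamma h}-B$ is exactly the paper's computation of the divisor $\mathfrak d_k$ (with $k=h$) in (\ref{p2-40})--(\ref{p2-42}), and the matching of $\deg(\varphi^{*}\ell)$ with $\deg(\varphi)\cdot\deg C(f,g,h)$ via a generic line meeting the curve in $\deg C(f,g,h)$ distinct smooth points is precisely what Lemmas \ref{p2-3} and \ref{p2-6} establish (the latter by the resultant/discriminant argument). The reduction of the cuspidal case by subtracting the fixed cusp divisor, which commutes with taking minima, is also the paper's argument via (\ref{2div}).
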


The proof of the theorem uses fine points of the theory of compact Riemann surfaces and algebraic curves. 
Preliminaries about divisors attached to cuspidal 
modular forms are stated in Section \ref{prelim}. The proof of the theorem is given in Section \ref{p2} in a 
series of Lemmas. In Section \ref{appmod} we prove Theorem \ref{ithm} using Theorem \ref{ithm-2}.

As we see, the canonical model of $X_0(N)$ is obtained using cusp forms of weight $24$. 
In Section \ref{gen}, we show that 
this model is just one in the series of birational models constructed using cusp forms (this is not essential). 
To state the main result of Section \ref{gen} (see Theorem \ref{p2-thm}), we introduce some notation. 

Let $m\ge 4$ be an even integer and let $W\subset M_m(\Gamma)$ be a non--zero linear subspace. 
Then,  we say that  $W$ generates the field of rational functions $\mathbb C(\mathfrak R_\Gamma)$ if $\dim W\ge 2$, and 
there exists  a basis
$f_0, \ldots, f_{s-1}$ of $W$,  such that the the holomorphic map  $\mathfrak R_\Gamma\longrightarrow \bbP^{s-1}$
given by $\mathfrak a_z\mapsto \left(f_0(z): \cdots : f_{s-1}(z)\right)$ is birational. Clearly, this notion does not 
depend on the choice of the basis used. It is also obvious that this is equivalent with the fact that
$\mathbb C(\mathfrak R_\Gamma)$ is generated over $\mathbb C$ with the quotients $f_i/f_0$, $1\le i\le s-1$.
We need one more defintion. We say that $W\not\subset S_m(\Gamma)$ (resp., $W\subset S_m(\Gamma)$)
separates the points on $\mathfrak R_\Gamma$ if for each $\mathfrak a\in 
\mathfrak R_\Gamma$ there exists $f\in W$, $f\ne 0$, such that $\mathfrak c'_f(\mathfrak a)=0$
(resp., $\mathfrak c_f(\mathfrak a)=0$). The geometric meaning of 
this assumption is that the complete linear system attached to the divisor of above holomorphic map into $\bbP^{s-1}$
has no base points (see the proof of Lemma \ref{prelim-8-cor}). Now, we are ready to state the main result of 
Section \ref{gen} (see Theorem \ref{p2-thm}).

\begin{Thm}\label{ithm-3}
Assume that  $m\ge 4$ is an even integer. Let $W\subset M_m(\Gamma)$, $\dim W\ge 3$,  be a subspace which generates
 the field of rational 
functions $\mathbb C(\mathfrak R_\Gamma)$, and separates the points on $\mathfrak R_\Gamma$. For example, if 
$\dim S_m(\Gamma) \ge \max{(g(\Gamma)+2, 3)}$, then we can take $W=S_m(\Gamma)$. Let $f, g \in W$ be linearly independent.
Then there exists  a non--empty Zariski open set $\cal U\subset W$ such that for any 
$h\in \cal U$ we have the following:
\begin{itemize}
\item[(i)] $\mathfrak R_\Gamma$ is birationally equivalent to  $\cal C(f, g, h)$, and 
\item[(ii)]  $\cal C(f, g, h)$ has degree equal to $\dim M_m(\Gamma)+g(\Gamma)-1$ (resp., 
$\dim S_m(\Gamma)+g(\Gamma)-1$) if $W\not\subset S_m(\Gamma)$ (resp., $W\subset S_m(\Gamma)$).
\end{itemize}
\end{Thm}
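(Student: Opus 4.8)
The plan is to deduce both assertions from Theorem \ref{ithm-2} by showing that for $h$ in a suitable Zariski open set the correction term
$\sum_{\mathfrak a}\min(\mathfrak c'_f(\mathfrak a),\mathfrak c'_g(\mathfrak a),\mathfrak c'_h(\mathfrak a))$ vanishes and that $\varphi$ is birational, so that $\deg(\varphi)=1$. First I would treat the vanishing of the correction term. Because $W$ separates the points on $\mathfrak R_\Gamma$, for each $\mathfrak a$ there is some $f_{\mathfrak a}\in W$ with $\mathfrak c'_{f_{\mathfrak a}}(\mathfrak a)=0$; equivalently the condition $\mathfrak c'_h(\mathfrak a)=0$ cuts out a dense Zariski open subset $\calU_{\mathfrak a}\subset W$ (it is the complement of the hyperplane of $h$ that vanish to the requisite order at $\mathfrak a$, using Lemma \ref{prelim-1}(vi) to translate between $\mathrm{div}(h)$ and $\mathfrak c'_h$). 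A priori $\mathfrak R_\Gamma$ is infinite, so I cannot just intersect all the $\calU_{\mathfrak a}$; instead I would argue that the set of $\mathfrak a$ where $\min(\mathfrak c'_f(\mathfrak a),\mathfrak c'_g(\mathfrak a))>0$ is finite (it is contained in $\supp\mathrm{div}(f)\cap\supp\mathrm{div}(g)$, a finite set since $f,g$ are linearly independent nonzero forms), and only at those finitely many points can the minimum of the three divisors be nonzero. Intersecting the finitely many $\calU_{\mathfrak a}$ over that finite bad set gives a nonempty Zariski open $\calU_1\subset W$ on which the correction term is $0$. The same argument works verbatim in the cuspidal case with $\mathfrak c'$ replaced by $\mathfrak c$, using that $W\subset S_m(\Gamma)$ separates points in the cuspidal sense.

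Next I would handle birationality. Since $W$ generates $\mathbb C(\mathfrak R_\Gamma)$, fixing a basis $f_0,\ldots,f_{s-1}$ of $W$ the map $\Psi\colon\mathfrak R_\Gamma\to\bbP^{s-1}$, $\mathfrak a_z\mapsto(f_0(z):\cdots:f_{s-1}(z))$, is birational onto its image $Y$; in particular $\Psi$ is generically injective. Composing with a generic linear projection $\pi\colon\bbP^{s-1}\dashrightarrow\bbP^2$ from a generic linear center, the composite $\pi\circ\Psi$ remains generically injective onto its image as long as the projection center is chosen generically off $Y$ — this is the standard fact that a generic projection of a curve in $\bbP^{s-1}$ to $\bbP^2$ is birational onto its image. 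After fixing $f,g\in W$ linearly independent, the map $\varphi=\varphi_{f,g,h}=(f:g:h)$ is precisely such a projection restricted to the first two coordinates being prescribed and the third varying; I would note that the locus of $h\in W$ for which $\varphi_{f,g,h}$ fails to be generically injective is the vanishing locus of finitely many nonzero polynomial conditions on the coefficients of $h$ (generic injectivity of a projection is a Zariski open condition on the center), hence is contained in a proper Zariski closed subset, whose complement is a nonempty Zariski open $\calU_2\subset W$. On $\calU_2$, $\varphi$ is birational, so $\deg(\varphi)=1$.

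Finally I would set $\calU=\calU_1\cap\calU_2$, which is a nonempty Zariski open subset of $W$ since $W$ is irreducible as an affine space. For $h\in\calU$, part (i) follows from $h\in\calU_2$. For part (ii), apply Theorem \ref{ithm-2}: in the case $W\not\subset S_m(\Gamma)$ the first displayed formula gives $\deg(\varphi)\cdot\deg\calC(f,g,h)=\dim M_m(\Gamma)+g(\Gamma)-1-0$, and since $\deg(\varphi)=1$ we get $\deg\calC(f,g,h)=\dim M_m(\Gamma)+g(\Gamma)-1$; in the case $W\subset S_m(\Gamma)$ the second displayed formula with the cuspidal divisors $\mathfrak c_f,\mathfrak c_g,\mathfrak c_h$ and $\calU_1$ chosen in the cuspidal sense gives $\deg\calC(f,g,h)=\dim S_m(\Gamma)+g(\Gamma)-1$. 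It remains to justify the "for example" clause: if $\dim S_m(\Gamma)\ge\max(g(\Gamma)+2,3)$ then $W=S_m(\Gamma)$ separates points because for a smooth curve of genus $g$ a linear system of (projective) dimension $\ge g+1$, or more precisely the canonical-type system attached to $S_m$, is base-point free — I would cite the Riemann–Roch computation already recorded in the discussion preceding Lemma \ref{prelim-8-cor}, where base-point freeness is identified with the separation-of-points property.

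The main obstacle I anticipate is the birationality step: carefully showing that generic injectivity of $\varphi_{f,g,h}$, with $f,g$ \emph{fixed}, still defines a nonempty Zariski open set of $h$ — one must check the bad $h$ cannot form all of $W$, i.e. that at least one valid $h$ exists, which uses that $W$ generates the function field so that \emph{some} triple from $W$ already separates a generic pair of points; packaging this as an open condition on $h$ (rather than on the full projection center) is the delicate point, and I would likely isolate it as a separate lemma in the style of Lemma \ref{p2-6}.
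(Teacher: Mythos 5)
Your treatment of the correction term in Theorem \ref{ithm-2} is sound and essentially parallels what the paper does: the bad points form a finite set (contained in $\supp(\mathfrak c'_f)$), and at each such $\mathfrak a$ the condition $\mathfrak c'_h(\mathfrak a)=0$ is the complement of a proper linear subspace of $W$, proper precisely because $W$ separates points; the paper packages this as the nonvanishing of a product of linear forms $\prod_{\mathfrak a\in\supp(\mathfrak c'_{f})}\sum_i\lambda_i (f_i/f_{i_{\mathfrak a}})(\mathfrak a)$, which even yields the stronger conclusion $\supp(\mathfrak c'_f)\cap\supp(\mathfrak c'_h)=\emptyset$. The ``for example'' clause is likewise handled as you suggest, via the base-point-freeness and very-ampleness statements (Lemmas \ref{ls-thm} and \ref{prelim-8}).

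The genuine gap is exactly the step you flag as ``the delicate point'' and then leave unresolved: birationality of $\varphi_{f,g,h}$ for $h$ in a nonempty open set, with $f,g$ \emph{fixed}. The standard fact about generic projections of a curve from $\bbP^{s-1}$ to $\bbP^2$ does not apply as stated, because your projection center is constrained to be a hyperplane section of the fixed codimension-two space $\{f=g=0\}$, and openness of the bad locus is worthless without exhibiting one good $h$ in this constrained family. The missing idea (Lemma \ref{p2-8} of the paper) is field-theoretic rather than projective-geometric: since $g/f$ is nonconstant, $K=\Bbb C(g/f)\subset L=\Bbb C(\mathfrak R_\Gamma)$ is a finite separable extension, and $L=K(f_2/f_0,\ldots,f_{s-1}/f_0)$ because $W$ generates the function field; the Primitive Element Theorem then produces $\lambda_i$ with $L=K\bigl((\lambda_2f_2+\cdots+\lambda_{s-1}f_{s-1})/f_0\bigr)$, and the set of such $\lambda$ is cut out by the nonvanishing of the resultant of the characteristic polynomial of multiplication by $(\sum\lambda_if_i)/f_0$ on $L$ and its $X$-derivative --- a polynomial condition that is not identically zero precisely because one primitive element exists. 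This simultaneously supplies the existence of a good $h$ and the Zariski-openness you need, after which $\Bbb C(g/f,h/f)=\Bbb C(\mathfrak R_\Gamma)$ gives birationality and Theorem \ref{ithm-2} with vanishing correction term finishes the degree computation as you describe.
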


In Section \ref{exist} we prove the following corollary (see Corollary \ref{exist-1}):

\begin{Cor}
Assume that $N\not\in \{1, 2, 3, 4, 5, 6, 7, 8, 9, 10, 12, 13, 16, 18, 25\}$ (so that the genus 
$g(\Gamma_0(N))\ge 1$). Assume that  $m\ge 4$ (if $N\neq 11$) and $m\ge 6$ (if $N=11$) is an even integer. 
Let $f, g \in S_m(\Gamma_0(N))$  be linearly independent with integral $q$--expansions.
Then, there exists infinitely many   $h\in S_m(\Gamma_0(N))$   
with integral $q$--expansion such that  we have the following:
\begin{itemize}
\item[(i)] $X_0(N)$ is birationally equivalent to  $\cal C(f, g, h)$, 
\item[(ii)] $\cal C(f, g, h)$ has degree equal to $\dim S_m(\Gamma_0(N))+g(\Gamma_0(N))-1$ 
(this number can be easily explicitly computed using Lemma \ref{prelim-1}(v) and (\ref{gggenus})); 
if $N=11$, then the minimal possible degree achieved (for $m=6$) is $4$, and if $N\neq 11$, then the minimal 
possible degree achieved (for $m=4$) is
$$
\frac13 \psi(N)-  \frac13 \nu_3 -\sum_{d>0, d|N} \phi((d, N/d)),
$$
where $\nu_3$ is the number of elliptic elements of order three on $X_0(N)$,
$\nu_3=0$ if $9|N$, and $\nu_3=\prod_{p|N}\left(1+ \left(\frac{-3}{p}\right)\right)$, otherwise.

\item[(iii)] the equation of $\cal C(f, g, h)$ has integral coefficients.
\end{itemize}
\end{Cor}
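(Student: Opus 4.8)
The plan is to deduce this corollary from Theorem \ref{ithm-3} (i.e. Theorem \ref{p2-thm}) applied to $\Gamma = \Gamma_0(N)$ and $W = S_m(\Gamma_0(N))$, after checking two things: that the hypotheses of Theorem \ref{ithm-3} are met for $\Gamma_0(N)$ in the stated range, and that among the infinitely many admissible $h$ produced by Theorem \ref{ithm-3} one can find infinitely many with integral $q$-expansion, so that (iii) also holds. First I would record, using Lemma \ref{prelim-1}(v) together with the genus formula (\ref{gggenus}), the exact value of $\dim S_m(\Gamma_0(N))$ and of $\dim S_m(\Gamma_0(N)) + g(\Gamma_0(N)) - 1$; the point is to verify $\dim S_m(\Gamma_0(N)) \ge \max(g(\Gamma_0(N)) + 2, 3)$ for all $N$ outside the listed exceptional set when $m \ge 4$, and for $N = 11$ when $m \ge 6$ (the case $m=4$, $N=11$ fails this numeric bound, which is exactly why $N=11$ is singled out). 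This is a finite check in small cases plus a monotonicity/asymptotic estimate in $m$ and $N$. Granting it, Theorem \ref{ithm-3} applies with $W = S_m(\Gamma_0(N))$: $W \subset S_m(\Gamma_0(N))$, $W$ generates $\mathbb C(X_0(N))$, and $W$ separates points, so there is a non-empty Zariski open $\mathcal U \subset W$ such that every $h \in \mathcal U$ gives $X_0(N) \cong \mathcal C(f,g,h)$ birationally, of degree $\dim S_m(\Gamma_0(N)) + g(\Gamma_0(N)) - 1$. This yields (i) and (ii) at once, and the explicit minimal degree for $m = 4$ (resp.\ $m = 6$, $N = 11$) is then just the evaluation of this dimension formula, using that $\dim S_4(\Gamma_0(N)) + g - 1 = \tfrac13\psi(N) - \tfrac13\nu_3 - \sum_{d \mid N}\phi((d, N/d))$ and $\dim S_6(\Gamma_0(11)) + g(\Gamma_0(11)) - 1 = 4$.

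Next I would handle the arithmetic refinement, which is the real content beyond a direct citation of Theorem \ref{ithm-3}. The subspace $S_m(\Gamma_0(N))^{\mathbb Z}$ of forms with integral $q$-expansion is a lattice of full rank in $S_m(\Gamma_0(N))$ — this is standard (a basis of Hecke eigenform-type or the $q$-expansion principle; one can cite the existence of an integral basis). Since $\mathcal U$ is a non-empty Zariski open subset of the complex vector space $W = S_m(\Gamma_0(N))$ and the $\mathbb Q$-points $S_m(\Gamma_0(N))^{\mathbb Q} = S_m(\Gamma_0(N))^{\mathbb Z} \otimes \mathbb Q$ are Zariski dense in $W$, the set $\mathcal U \cap S_m(\Gamma_0(N))^{\mathbb Q}$ is non-empty; in fact it is Zariski dense and hence infinite, and clearing denominators shows $\mathcal U \cap S_m(\Gamma_0(N))^{\mathbb Z}$ is infinite (scaling $h$ by a nonzero rational does not change the map $\mathfrak a_z \mapsto (f(z):g(z):h(z))$ up to the obvious coordinate scaling, so if some rational multiple of $h$ lies in $\mathcal U$ then some integral multiple does too, and it defines the same curve $\mathcal C(f,g,h)$). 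This produces the infinitely many $h$ with integral $q$-expansion demanded in the statement, establishing (i) and (ii) for each of them.

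For part (iii), I would argue that when $f, g, h$ all have integral $q$-expansions the defining polynomial of $\mathcal C(f,g,h) \subset \mathbb P^2$ can be taken in $\mathbb Z[x_0, x_1, x_2]$. The curve is the Zariski closure of the image of $\mathfrak a_z \mapsto (f(z):g(z):h(z))$, equivalently the vanishing locus of the kernel of the ring map $\mathbb C[x_0,x_1,x_2] \to \bigoplus_k M_{km}(\Gamma_0(N))$, $x_i \mapsto (f,g,h)$; this kernel is principal, generated by a single irreducible homogeneous polynomial $P$. Because $f, g, h$ have $q$-coefficients in $\mathbb Z$, the relation $P(f,g,h) = 0$ is detected on $q$-expansions with $\mathbb Z$-coefficients, so the ideal of relations is defined over $\mathbb Q$; its generator can then be scaled to have coprime integer coefficients by Gauss's lemma (the content argument), giving an integral equation. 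The cleanest way to phrase this is to invoke the discussion already used for the model (\ref{mmap}) in the introduction — where exactly the same integrality phenomenon was asserted for $E_4^3(z)\Delta(Nz)$ etc.\ — since the mechanism is identical.

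The main obstacle I expect is the bookkeeping in the first step: verifying $\dim S_m(\Gamma_0(N)) \ge \max(g(\Gamma_0(N)) + 2, 3)$ uniformly, and in particular pinning down precisely which $(m, N)$ are excluded, is where the list $\{1,2,\dots,10,12,13,16,18,25\}$ and the single exception $N = 11$, $m = 4$ come from; one has to combine the dimension formula for $S_m(\Gamma_0(N))$ with the genus formula (\ref{gggenus}) and the elliptic-point counts $\nu_2, \nu_3$, and check the boundary cases by hand. The rest — the density argument for integral $h$ and the Gauss-lemma integrality of the equation — is routine once the framework of Theorem \ref{ithm-3} is in place. I would also remark that the explicit formula for the $m=4$ minimal degree in (ii) requires knowing $\dim M_4(\Gamma_0(N)) - \dim S_4(\Gamma_0(N))$ (the number of cusps) and the precise form of $\mathfrak c_f$ versus $\mathfrak c'_f$, so that the cusp and order-$3$ elliptic contributions $\sum_{d\mid N}\phi((d,N/d))$ and $\tfrac13\nu_3$ appear correctly.
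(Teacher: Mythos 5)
Your proposal follows essentially the same route as the paper: apply Theorem \ref{p2-thm} with $W=S_m(\Gamma_0(N))$ after verifying $\dim S_m(\Gamma_0(N))\ge \max(g(\Gamma_0(N))+2,3)$ (with $N=11$, $m=4$ as the lone failure, forcing $m\ge 6$ there), use the Eichler--Shimura integral basis plus Zariski density of integral points to extract infinitely many integral $h$ from $\mathcal U$, and obtain integrality of the defining equation from the fact that the relation among $f,g,h$ is determined (up to scalar) by a linear system with integer coefficients coming from the $q$-expansions. The paper carries out the dimension bookkeeping you defer, but the argument is the same.
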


\vskip .2in
As an example to Theorem \ref{ithm-3}, we consider the subspace $W\subset M_{12}(\Gamma_0(N))$ which basis is 
$\Delta, E^3_4, \Delta(N\cdot ),$ and $E^3_4(N\cdot )$ (see Lemma \ref{exam-1}). It satisfies the requirement stated in 
Theorem \ref{ithm-3}, and a direct application of Theorem \ref{ithm-3} is stated in Corollary \ref{exam-3}. But 
the following result is an improvement with a similar proof (see Theorem \ref{exam-4}):

\begin{Thm} Let $N\ge 2$. 
Then, there exists infinitely many  pairs $(\alpha, \beta)\in \mathbb Z^2$ such that 
$X_0(N)$ is birational with 
$\cal C(\Delta,  E^3_4,  \alpha \Delta(N\cdot )+ \beta E^3_4(N\cdot ))$,
and 
$$
\deg{\cal C(\Delta,  E^3_4,  \alpha \Delta(N\cdot )+ \beta E^3_4(N\cdot ))}=\psi(N).
$$
\end{Thm}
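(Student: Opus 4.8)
The plan is to apply Theorem~\ref{ithm-3} to the subspace $W\subset M_{12}(\Gamma_0(N))$ spanned by $\Delta, E_4^3, \Delta(N\cdot), E_4^3(N\cdot)$, which by Lemma~\ref{exam-1} generates $\mathbb C(\mathfrak R_{\Gamma_0(N)})$ and separates points, and to the two fixed linearly independent forms $f=\Delta$, $g=E_4^3$. Theorem~\ref{ithm-3} then supplies a non-empty Zariski open $\mathcal U\subset W$ such that for $h\in\mathcal U$ the map $\varphi$ is birational and $\deg\mathcal C(\Delta, E_4^3, h)=\dim M_{12}(\Gamma_0(N))+g(\Gamma_0(N))-1$. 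The first step is to check that this number equals $\psi(N)$: by the dimension formula for $M_{12}(\Gamma_0(N))$ (Lemma~\ref{prelim-1}(v)) together with the genus formula (\ref{gggenus}), one computes $\dim M_{12}(\Gamma_0(N))+g(\Gamma_0(N))-1=\psi(N)$, the elliptic-point and cusp contributions cancelling against the weight-$12$ terms. (This is exactly the classical fact that the modular polynomial $\Phi_N$, whose locus is $\mathcal C(\Delta\Delta(N\cdot),\ldots)$ after the substitution in (\ref{mmap}), has total degree tied to $\psi(N)$; but here we only need the clean numerical identity.)

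The second step is to intersect $\mathcal U$ with the two-parameter family. Consider the affine plane $L=\{\alpha\Delta(N\cdot)+\beta E_4^3(N\cdot):(\alpha,\beta)\in\mathbb C^2\}\subset W$. Since $\Delta(N\cdot)$ and $E_4^3(N\cdot)$ are linearly independent modulo $\mathbb C\Delta+\mathbb C E_4^3$ (they are, as $W$ is $4$-dimensional), $L$ is a $2$-plane meeting $\mathbb C\Delta+\mathbb C E_4^3$ only at $0$; in particular no nonzero element of $L$ lies in the span of $f,g$, so the relevant genericity from Theorem~\ref{ithm-3} can indeed be met within $L$. The key point is that $\mathcal U$ is Zariski-open and non-empty in $W$, hence $\mathcal U\cap L$ is Zariski-open in the $2$-plane $L$; I must argue it is non-empty. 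For this it suffices to exhibit one $h_0=\alpha_0\Delta(N\cdot)+\beta_0 E_4^3(N\cdot)\in\mathcal U$, and the natural choice is $\beta_0=1,\alpha_0=0$, i.e.\ $h_0=E_4^3(N\cdot)$, which corresponds precisely to the classical birational model (\ref{mmap}): the map $\mathfrak a_z\mapsto(\Delta(z)\Delta(Nz):E_4^3(z)\Delta(Nz):E_4^3(Nz)\Delta(z))$ is birational because $\mathbb C(X_0(N))=\mathbb C(j,j(N\cdot))$, and rescaling the homogeneous coordinates by the common factor shows $\varphi_{(\Delta,E_4^3,E_4^3(N\cdot))}$ is birational too. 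Thus $E_4^3(N\cdot)\in\mathcal U$ (it gives a birational $\varphi$ of the expected degree), so $\mathcal U\cap L\neq\varnothing$, hence $\mathcal U\cap L$ is a non-empty Zariski-open subset of the affine plane $L\cong\mathbb C^2$, and therefore contains infinitely many points with $(\alpha,\beta)\in\mathbb Z^2$ (a non-empty Zariski-open subset of $\mathbb A^2$ misses only the zero locus of a nonzero polynomial, which cannot contain all of $\mathbb Z^2$).

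The third step assembles the conclusion: for each such integral pair $(\alpha,\beta)$ with $h=\alpha\Delta(N\cdot)+\beta E_4^3(N\cdot)\in\mathcal U\cap L$, part (i) of Theorem~\ref{ithm-3} gives that $X_0(N)$ is birational to $\mathcal C(\Delta,E_4^3,h)$, and part (ii) (in the case $W\not\subset S_{12}(\Gamma_0(N))$, which holds since $E_4^3\notin S_{12}$) gives $\deg\mathcal C(\Delta,E_4^3,h)=\dim M_{12}(\Gamma_0(N))+g(\Gamma_0(N))-1=\psi(N)$ by Step~1. That is the assertion.

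The main obstacle I anticipate is not any single hard estimate but rather the bookkeeping in Step~1—verifying by a direct computation with the dimension and genus formulas that the weight-$12$ contributions at elliptic points and cusps cancel to leave exactly $\psi(N)$—and, more subtly, making sure the genericity hypotheses of Theorem~\ref{ithm-3} (that $f,g$ be linearly independent and that the open set $\mathcal U$ is taken for \emph{this} pair $f=\Delta,g=E_4^3$) are compatible with staying inside the $2$-plane $L$. The phrase "an improvement with a similar proof" in the statement suggests the authors in fact re-run the proof of Theorem~\ref{ithm-3} tailored to this family rather than quoting it as a black box, presumably to get the cleaner statement that \emph{every} sufficiently generic integral $(\alpha,\beta)$ works; I would first try the black-box route above and only refine if the open set $\mathcal U$ turns out to depend on $h$ in a way that obstructs the plane-section argument.
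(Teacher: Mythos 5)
Your Steps 1 and 3 are fine, and your overall plan (cut the generic open set down to the plane $L=\{\alpha\Delta(N\cdot)+\beta E_4^3(N\cdot)\}$ and show the restriction is non-empty) is the right shape. The gap is in Step 2, in two places. First, the claim that $\varphi_{(\Delta,E_4^3,E_4^3(N\cdot))}$ is birational "by rescaling the homogeneous coordinates by the common factor" of the map (\ref{mmap}) is false: the three coordinates $\Delta\Delta(N\cdot)$, $E_4^3\Delta(N\cdot)$, $E_4^3(N\cdot)\Delta$ have no common factor, and $(\Delta:E_4^3:E_4^3(N\cdot))$ is not projectively equal to them. The field its ratios generate is $\mathbb C\bigl(j,\ E_4^3(N\cdot)/\Delta\bigr)$, which is not visibly $\mathbb C(j,j(N\cdot))$; you would still have to show that $\Delta(N\cdot)/\Delta$ lies in it. Second, and independently, even if that particular map were birational of the right degree, this would not put $E_4^3(N\cdot)$ into $\mathcal U$: the set $\mathcal U$ produced by Theorem~\ref{p2-thm} is a concrete non-vanishing locus built in the proof of Lemma~\ref{p2-8} (a resultant condition plus a product of linear forms), not the locus of all $h$ for which the conclusion happens to hold, so "the conclusion holds at $h_0$" does not give "$h_0\in\mathcal U$". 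As written, the non-emptiness of $\mathcal U\cap L$ is not established.

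The paper closes exactly this gap by opening the black box, as you suspected it might. With the basis $f_0=\Delta$, $f_1=E_4^3$, $f_2=\Delta(N\cdot)$, $f_3=E_4^3(N\cdot)$, the first condition defining $\mathcal U$ in Lemma~\ref{p2-8} is $R(\lambda_2,\lambda_3)\neq 0$, which already depends only on $(\lambda_2,\lambda_3)=(\alpha,\beta)$; it is not identically zero because $\mathbb C(X_0(N))=\mathbb C(j)\bigl(\Delta(N\cdot)/\Delta,\ E_4^3(N\cdot)/\Delta\bigr)$ (here one uses $j(N\cdot)=\bigl(E_4^3(N\cdot)/\Delta\bigr)\big/\bigl(\Delta(N\cdot)/\Delta\bigr)$) together with the primitive element theorem. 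So its restriction to $L$ is a non-empty Zariski open subset of $\mathbb C^2$, which contains infinitely many integer points. The second condition defining $\mathcal U$ (the disjoint-support genericity) is simply not needed for this family: by Lemma~\ref{exam-2}, $\supp{(\mathfrak c'_{\Delta})}\cap\supp{(\mathfrak c'_{E_4^3})}=\emptyset$ already forces $\sum_{\mathfrak a}\min\bigl(\mathfrak c'_{\Delta}(\mathfrak a),\mathfrak c'_{E_4^3}(\mathfrak a),\mathfrak c'_{h}(\mathfrak a)\bigr)=0$ for \emph{every} $h$, so Theorem~\ref{p2-120} gives degree $\dim M_{12}(\Gamma_0(N))+g(\Gamma_0(N))-1=\psi(N)$ unconditionally once $\varphi$ is birational. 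Replacing your "exhibit $h_0\in\mathcal U$" step by these two observations repairs the argument and is precisely the paper's proof.
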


\vskip .2in 

In closing the introduction we should mention several other works which construct plane models of modular curves 
(\cite{bnmjk}, \cite{sgal}, \cite{mshi}, \cite{yy}). They use strategies which are quite different than ours.

I would like to thank M. Kazalicki and G. Savin for some useful discussion about integral structure on the spaces 
of cusp forms.

\section{Preliminaries}\label{prelim}

In this section we recall from (\cite{Miyake}, 2.3) some notions related to the theory of divisors 
of modular forms and state a preliminary result. In this section $\Gamma$ is any Fuchsian group of the 
first kind.

Let $m\ge 2$ be an even integer and $f\in M_{m}(\Gamma)-\{0\}$. Then
$\nu_{z-\xi}(f)$ the order of the holomorphic function $f$ at $\xi$.
For each $\gamma\in \Gamma$, the functional equation
$f(\gamma.z)=j(\gamma, z)^m f(z)$, $z\in \mathbb H$, shows that 
$\nu_{z-\xi}(f)=\nu_{z-\xi'}(f)$, where $\xi'=\gamma.\xi$. 
Also, if we let
$$
e_{\xi} =\# \left(\Gamma_{\xi}/\Gamma\cap \{\pm 1\}\right),
$$
then $e_{\xi}=e_{\xi'}$. The point $\xi\in \mathbb H$ is elliptic if $e_\xi>1$. Next, following (\cite{Miyake}, 2.3), we define
$$
\nu_\xi(f)=\nu_{z-\xi}(f)/e_{\xi}.
$$
Clearly, $\nu_{\xi}=\nu_{\xi'}$, and we may let 
$$
\nu_{\mathfrak a_\xi}(f)=\nu_\xi(f),
$$ 
where 
$$\text{$\mathfrak a_\xi\in \mathfrak R_\Gamma$ is a projection of $\xi$
to $\mathfrak R_\Gamma$,} 
$$
a notation we use throughout this paper.

If $x\in \bbR\cup \{\infty\}$ is a cusp for $\Gamma$, then we define 
$\nu_x(f)$ as follows. Let $\sigma\in SL_2(\bbR)$ such that
$\sigma.x=\infty$. We write 
$$
\{\pm 1\} \sigma \Gamma_{x}\sigma^{-1}= \{\pm 1\}\left\{\left(\begin{matrix}1 & lh'\\ 0 &
    1\end{matrix}\right); \ \ l\in \bbZ\right\},
$$
where $h'>0$. Then we write the Fourier expansion of $f$ at $x$ as follows:
$$
(f|_m \sigma^{-1})(\sigma.z)= \sum_{n=1}^\infty a_n e^{2\pi
  \sqrt{-1}n\sigma.z/h'}.
$$

We let 
$$
\nu_x(f)=N\ge 0,
$$
where $N$ is defined by $a_0=a_1=\cdots =a_{N-1}=0$, $a_N\neq 0$. One
easily see that this definition does not depend on $\sigma$. Also, 
if $x'=\gamma.x$, then
$\nu_{x}(f)=\nu_{x}(f)$. Hence, if $\mathfrak b_x\in \mathfrak
R_\Gamma$ is a cusp corresponding to $x$, then we may define
$$
\nu_{\mathfrak b_x}=\nu_x(f). 
$$

Put
$$
\text{div}{(f)}=\sum_{\mathfrak a\in \mathfrak
R_\Gamma} \nu_{\mathfrak a}(f) \mathfrak a \in \ \  \bbQ\otimes \text{Div}(\mathfrak R_\Gamma),
$$
where $\text{Div}(\mathfrak R_\Gamma)$ is the group of (integral) divisors on 
$\mathfrak R_\Gamma$.

Using (\cite{Miyake}, 2.3), this sum is finite i.e., $ \nu_{\mathfrak a}(f)\neq 0$
for only a finitely many points. We let 
$$
\text{deg}(\text{div}{(f)})=\sum_{\mathfrak a\in \mathfrak
R_\Gamma} \nu_{\mathfrak a}(f).
$$

Let $\mathfrak d_i\in \bbQ\otimes \text{Div}(\mathfrak R_\Gamma)$, $i=1, 2$. Then we say 
that $\mathfrak d_1\ge \mathfrak d_2$ if their difference $\mathfrak d_1 - \mathfrak d_2$ 
belongs to $\text{Div}(\mathfrak R_\Gamma)$ and is non--negative in the usual sense.

\vskip .2in

\begin{Lem}\label{prelim-1} Assume that $m\ge 4$ is an even integer. Assume that 
$f\in  M_m(\Gamma)$, $f\neq 0$. Let $t$ be the number of inequivalent cusps  for $\Gamma$.  Then we have the following:
\begin{itemize}

\item[(i)] For $\mathfrak a\in \mathfrak
R_\Gamma$, we have  $\nu_{\mathfrak a}(f) \ge 0$.

\item [(ii)]  For a cusp $\mathfrak a\in \mathfrak R_\Gamma$, we have that 
$\nu_{\mathfrak a}(f)\ge 1$ is an integer.

\item[(iii)] If  $\mathfrak a\in \mathfrak
R_\Gamma$ is not an elliptic point or a cusp, then $\nu_{\mathfrak a}(f)\ge 0$
is an integer.  If  $\mathfrak a\in \mathfrak
R_\Gamma$ is an elliptic point, then $\nu_{\mathfrak a}(f)-\frac{m}{2}(1-1/e_{\mathfrak a})$ is 
an integer.

\item[(iv)]Let $g(\Gamma)$ be the genus of $ \mathfrak R_\Gamma$. Then 
 $$
\text{deg}(\text{div}{(f)})=m(g(\Gamma)-1)+ \frac{m}{2}\left(t+ \sum_{\mathfrak a\in \mathfrak
R_\Gamma, \ \ elliptic} (1-1/e_{\mathfrak a})\right).
$$

\item[(v)] Let $[x]$ denote the largest integer $\le x$ for $x\in
  \bbR$.  Then

\begin{align*}
\dim S_m(\Gamma) &=
(m-1)(g(\Gamma)-1)+(\frac{m}{2}-1)t+ \sum_{\substack{\mathfrak a\in \mathfrak
R_\Gamma, \\ elliptic}} \left[\frac{m}{2}(1-1/e_{\mathfrak a})\right]\\
\dim M_m(\Gamma)&=\dim S_m(\Gamma)+t.
\end{align*}

\item[(vi)] There exists an integral divisor $\mathfrak c'_f\ge 0$ of degree 
$\dim M_m(\Gamma)+ g(\Gamma)-1$ such that
\begin{align*}
\text{div}{(f)}= & \mathfrak c'_f+ \sum_{\mathfrak a\in \mathfrak
R_\Gamma, \ \ elliptic} \left(\frac{m}{2}(1-1/e_{\mathfrak a}) -
\left[\frac{m}{2}(1-1/e_{\mathfrak
    a})\right]\right)\mathfrak a.
\end{align*}
\end{itemize}
\end{Lem}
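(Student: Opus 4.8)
The plan is to prove Lemma \ref{prelim-1} by assembling classical facts about modular forms on a Fuchsian group of the first kind, essentially following \cite{Miyake}, 2.3, and then extracting the divisor $\mathfrak c'_f$ by a rounding argument. First I would establish (i)--(iii), which are local statements. For a point $\mathfrak a = \mathfrak a_\xi$ with $\xi \in \mathbb H$ not elliptic, $e_\xi = 1$, so $\nu_{\mathfrak a}(f) = \nu_{z-\xi}(f)$ is the usual vanishing order of a holomorphic function, hence a nonnegative integer; this gives half of (iii) and the relevant part of (i). For an elliptic point, $e_{\mathfrak a} > 1$ and one uses the local uniformizer at $\mathfrak a$ on $\mathfrak R_\Gamma$: writing $f$ in terms of it and accounting for the automorphy factor $j(\gamma,z)^m$ under the stabilizer, one sees that $\nu_{z-\xi}(f) \equiv \tfrac{m}{2}(e_\xi - 1) \pmod{e_\xi}$ in the relevant normalization, so that $\nu_{\mathfrak a}(f) - \tfrac{m}{2}(1 - 1/e_{\mathfrak a})$ is an integer and in particular $\nu_{\mathfrak a}(f) \ge 0$. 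For a cusp, the Fourier expansion displayed just before the lemma starts at $a_1$-level or higher because $f$ is a \emph{modular form} of weight $m \ge 4$ (here one only needs $f$ holomorphic at the cusp and the stated normalization with the sum beginning at $n=1$), so $\nu_{\mathfrak a}(f) = N \ge 1$ is an integer, giving (ii).

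Next I would prove the degree formula (iv). This is the valence (or $k/12$-type) formula for $\mathrm{div}(f)$: integrating $d\log f$ around the boundary of a fundamental domain, or equivalently citing (\cite{Miyake}, 2.3), yields
$$
\deg(\mathrm{div}(f)) = \frac{m}{4\pi}\,\mathrm{vol}(\mathfrak R_\Gamma),
$$
and then Gauss--Bonnet expresses the hyperbolic area in terms of the genus, the number $t$ of cusps, and the elliptic orders:
$$
\frac{1}{4\pi}\,\mathrm{vol}(\mathfrak R_\Gamma) = g(\Gamma) - 1 + \frac{1}{2}\Bigl(t + \sum_{\mathfrak a \text{ elliptic}}(1 - 1/e_{\mathfrak a})\Bigr).
$$
Multiplying by $m$ gives (iv). Part (v) is then the Riemann--Roch computation for $\dim S_m(\Gamma)$ and $\dim M_m(\Gamma)$, which I would again take from (\cite{Miyake}, 2.3): $S_m(\Gamma)$ corresponds to global sections of a line bundle of degree $\deg(\mathrm{div}(f))$ minus the mandatory vanishing at cusps and elliptic points (the floor terms), and $M_m(\Gamma)$ adds back one dimension per cusp, whence $\dim M_m(\Gamma) = \dim S_m(\Gamma) + t$.

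Finally, (vi) is the new bookkeeping point. Given $f \in M_m(\Gamma)$, $f \ne 0$, define
$$
\mathfrak c'_f = \mathrm{div}(f) - \sum_{\mathfrak a \text{ elliptic}} \Bigl(\tfrac{m}{2}(1 - 1/e_{\mathfrak a}) - \bigl[\tfrac{m}{2}(1 - 1/e_{\mathfrak a})\bigr]\Bigr)\mathfrak a.
$$
By (iii), at each elliptic point $\mathfrak a$ the coefficient $\nu_{\mathfrak a}(f) - \tfrac{m}{2}(1-1/e_{\mathfrak a})$ is an integer, so subtracting the fractional part $\{\tfrac{m}{2}(1-1/e_{\mathfrak a})\}$ makes the coefficient at $\mathfrak a$ an integer; at all other points $\nu_{\mathfrak a}(f)$ was already a nonnegative integer by (i)--(ii). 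Hence $\mathfrak c'_f \in \mathrm{Div}(\mathfrak R_\Gamma)$ and $\mathfrak c'_f \ge 0$ (the subtracted amount at an elliptic point is at most the integer part of $\nu_{\mathfrak a}(f)$ since $\nu_{\mathfrak a}(f) \ge \tfrac{m}{2}(1-1/e_{\mathfrak a}) - \{\cdot\} \ge 0$ by (i), combined with integrality). For the degree, subtract: using (iv),
$$
\deg \mathfrak c'_f = m(g(\Gamma)-1) + \frac{m}{2}t + \sum_{\mathfrak a \text{ elliptic}} \Bigl[\tfrac{m}{2}(1 - 1/e_{\mathfrak a})\Bigr],
$$
and comparing with the formula for $\dim S_m(\Gamma)$ in (v), one checks $\deg \mathfrak c'_f = \dim S_m(\Gamma) + (g(\Gamma)-1) + t = \dim M_m(\Gamma) + g(\Gamma) - 1$, using $\dim M_m(\Gamma) = \dim S_m(\Gamma) + t$. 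I expect the only real subtlety to be the elliptic-point local analysis in (iii) — correctly pinning down the residue of $\nu_{z-\xi}(f)$ modulo $e_\xi$ from the automorphy factor — and the careful verification that the subtracted fractional amounts do not push any coefficient negative; everything else is a direct appeal to the valence formula, Gauss--Bonnet, and Riemann--Roch as in \cite{Miyake}.
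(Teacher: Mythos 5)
Your proposal is correct in substance and takes essentially the same route as the paper: the paper's own proof consists of citing (i)--(v) as standard facts from Miyake (2.3, 2.5) and deriving (vi) from (iii), (iv), (v) exactly by the rounding-and-degree-count you carry out, so you have simply written out the details behind the citations. Your verification of (vi) -- integrality from (iii), effectivity from $\nu_{\mathfrak a}(f)\ge 0$ plus integrality of the difference, and the degree bookkeeping matching $\dim M_m(\Gamma)+g(\Gamma)-1$ via (iv) and (v) -- is accurate.

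One point to fix: your justification of (ii) does not actually prove what you claim. You argue that the Fourier expansion at a cusp ``starts at $a_1$-level or higher because $f$ is a modular form of weight $m\ge 4$,'' but holomorphy at a cusp only gives $\nu_{\mathfrak a}(f)\ge 0$; a non-cuspidal form such as $E_4^3$ has nonvanishing constant term at $\infty$, and indeed the paper's own Lemma \ref{appmod-2} records $\text{div}(E_4^3)$ with no cusp in its support. The inequality $\nu_{\mathfrak a}(f)\ge 1$ at cusps holds only for $f\in S_m(\Gamma)$ (which is where the paper actually needs it, in defining $\mathfrak c_f$ in (\ref{2div})); as stated for all of $M_m(\Gamma)$ the claim in (ii) cannot be derived, and your appeal to the displayed normalization with the sum beginning at $n=1$ is circular. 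This does not affect (vi) or the rest of your argument, since there you only use $\nu_{\mathfrak a}(f)\ge 0$ and integrality at the cusps.
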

\begin{proof} The claims (i)--(v) are standard (\cite{Miyake}, 2.3, 2.5). The claim (vi) follows from (iii), 
(iv), and (v) (see Lemma 4-1 in \cite{Muic}).
\end{proof}

\vskip .2in

If $f\in S_m(\Gamma)$, we can define an   integral divisor $\mathfrak c'_f\ge 0$ of degree 
$\dim S_m(\Gamma)+g(\Gamma)-1$ by 

\begin{equation}\label{2div}
\mathfrak c_f\overset{def}{=}\mathfrak c'_f-
\sum_{\substack{\mathfrak b \in \mathfrak
R_\Gamma, \\ cusp}} \mathfrak b.
\end{equation}

\vskip .2in 
We end this section with an observation that we use later in the paper. We leave to the reader to check the details.
We introduce some notation (see \cite{Miyake}, 2.1). Let $\alpha\in GL_2^+(\mathbb R)$. Then, the map 
$f\longmapsto f|_k\alpha\overset{def}{=}\det{(\alpha)}^{-k/2}j(\alpha, \cdot)^{-k}f(\alpha. \cdot)$ is an isomorphism 
of vector spaces $M_k(\Gamma)\longrightarrow M_k(\alpha^{-1}\Gamma \alpha)$(resp., 
$S_k(\Gamma)\longrightarrow S_k(\alpha^{-1}\Gamma \alpha)$) (see \cite{Miyake}, (2.1.18)). Also, one easily check that 
$\alpha$ induces the following isomorphism of Riemann surfaces $\mathfrak R_{\alpha^{-1}\Gamma \alpha}\longrightarrow 
\mathfrak R_{\Gamma}$ given in the notation introduced earlier in this section: 
$\mathfrak a'_{z}\overset{def}{=}\left(\alpha^{-1}\Gamma \alpha\right).z \longmapsto \mathfrak a_{\alpha.z}=
\Gamma.(\alpha.z)$, where 
$z\in \mathbb H$, or $z$ is a cusp for $\alpha^{-1}\Gamma \alpha$. Finally, for 
$f\in M_k(\Gamma)$, $f\neq 0$, we have the following: if $\text{div}(f)=\sum_{z}m_z \mathfrak a_z$, then 
$\text{div}(f|_k\alpha)=\sum_{z}m_z \mathfrak a'_{\alpha^{-1}.z}$.

\section{A proof of the formula for the degree}\label{p2}
\vskip .2in

In this section $\Gamma$ is an arbitrary Fuchsian group of the first kind.  
We introduce the objects of our study in the following lemma:

\begin{Lem}\label{p2-1}  Assume that  $m\ge 4$ is an even integer such that 
$\dim M_m(\Gamma)\ge 3$. Let $f, g, h\in M_m(\Gamma)$ be linearly independent.
Then, the image of the map $\varphi: \ \mathfrak R_\Gamma\rightarrow \Bbb P^{2}$
given by
$$
\mathfrak a_z\longmapsto (f(z):  g(z): h(z))
$$
is an irreducible projective curve which we denote by 
$\cal C(f, g, h)$. The degree of $\cal C(f, g, h)$ (i.e., the degree of $P$) is 
$\le \dim M_m(\Gamma)+ g(\Gamma)-1$. Moreover, if $f$, $g$, and $h$ are selected to be cusp forms, then the 
degree is $\le \dim S_m(\Gamma)+ g(\Gamma)-1$.
\end{Lem}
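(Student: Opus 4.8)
The plan is to first deal with the degenerate possibility that the map $\varphi$ is constant: this cannot happen, because $f,g,h$ are linearly independent, so at least two of the three quotients $g/f, h/f$ (on the locus where $f\neq 0$) are non-constant meromorphic functions on $\mathfrak R_\Gamma$, hence $\varphi$ has infinite image. Since $\mathfrak R_\Gamma$ is an irreducible compact Riemann surface and $\varphi$ is holomorphic and non-constant, its image is an irreducible (closed) analytic subset of $\mathbb P^2$ of dimension $1$, hence an irreducible projective plane curve by Chow's theorem; call it $\mathcal C(f,g,h)$. Being an irreducible plane curve it is the zero locus of a single irreducible homogeneous polynomial $P\in\mathbb C[x_0,x_1,x_2]$, and by definition $\deg\mathcal C(f,g,h)=\deg P$.

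Next I would pin down what $\deg P$ computes. The composite $P(f,g,h)$ is a meromorphic modular form–like object: more precisely, for any line $\ell=\{a_0x_0+a_1x_1+a_2x_2=0\}$ not contained in $\mathcal C$, the pullback $\ell\circ\varphi = a_0f+a_1g+a_2h$ is a (generically nonzero) element of $M_m(\Gamma)$, and the number of intersection points of $\mathcal C$ with a \emph{generic} line $\ell$, counted properly, is $\deg\mathcal C\cdot\deg(\varphi)$. Thus it suffices to bound $\deg(\varphi)\cdot\deg\mathcal C$ from above by $\dim M_m(\Gamma)+g(\Gamma)-1$. The key computation: for generic $(a_0,a_1,a_2)$ the form $F:=a_0f+a_1g+a_2h$ vanishes, among the preimages $\varphi^{-1}(\ell\cap\mathcal C)$, exactly to the order dictated by $\varphi$, and the remaining vanishing of $\operatorname{div}(F)$ is forced to sit at the elliptic points (the fractional part recorded in Lemma \ref{prelim-1}(vi)). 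Concretely, $\operatorname{div}(F)\ge \sum_{\mathfrak a}\min(\mathfrak c'_f(\mathfrak a),\mathfrak c'_g(\mathfrak a),\mathfrak c'_h(\mathfrak a))\,\mathfrak a$ need not hold termwise, but the effective divisor $\mathfrak c'_F$ from Lemma \ref{prelim-1}(vi) has degree exactly $\dim M_m(\Gamma)+g(\Gamma)-1$, and $\mathfrak c'_F$ dominates the part of $\varphi^*(\ell)$ coming from the $\dim M_m(\Gamma)+g(\Gamma)-1$ "free" zeros. Pulling the intersection cycle $\ell\cap\mathcal C$ back through $\varphi$ and comparing degrees: $\deg(\varphi)\cdot\deg\mathcal C \le \deg\mathfrak c'_F = \dim M_m(\Gamma)+g(\Gamma)-1$. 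Since $\deg(\varphi)\ge 1$, the bound on $\deg\mathcal C$ follows.

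For the cuspidal refinement, if $f,g,h\in S_m(\Gamma)$ then $F=a_0f+a_1g+a_2h\in S_m(\Gamma)$ as well, and the relevant effective divisor is now $\mathfrak c_F=\mathfrak c'_F-\sum_{\mathfrak b\ \mathrm{cusp}}\mathfrak b$ from (\ref{2div}), which has degree $\dim S_m(\Gamma)+g(\Gamma)-1$; the same pullback-and-compare argument with $\mathfrak c_F$ in place of $\mathfrak c'_F$ gives $\deg(\varphi)\cdot\deg\mathcal C\le \dim S_m(\Gamma)+g(\Gamma)-1$, hence the sharper bound.

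The main obstacle I anticipate is making the "pull back the intersection with a generic line and compare degrees" step rigorous without invoking the full machinery of intersection theory: one must ensure the line $\ell$ avoids the (finitely many) singular points of $\mathcal C$ and the (finitely many) branch points of $\varphi$, so that $\ell\cap\mathcal C$ consists of $\deg\mathcal C$ distinct smooth points each pulled back to $\deg(\varphi)$ distinct points of $\mathfrak R_\Gamma$, and one must check that the local vanishing orders of $F=\ell\circ\varphi$ at those preimages are exactly the corresponding values of $\mathfrak c'_F$ (i.e. no extra vanishing hides at a preimage of a generic line point). This genericity bookkeeping — a dimension count showing the "bad" lines form a proper subvariety of the dual $\mathbb P^2$ — is the technical heart; the paper signals that the careful version is deferred to Lemma \ref{p2-3} and Lemma \ref{p2-6}, so here I would only need the inequality, for which the divisor-degree comparison above suffices.
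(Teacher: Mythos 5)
Your proposal is correct in substance and uses the same central device as the paper --- intersect $\mathcal C(f,g,h)$ with a generic line, observe that the pulled-back linear form $F=a_0f+a_1g+a_2h$ again lies in $M_m(\Gamma)$, and bound everything by $\deg{\mathfrak c'_F}=\dim M_m(\Gamma)+g(\Gamma)-1$ --- but you route the estimate through $\deg(\varphi)\cdot\deg{\mathcal C(f,g,h)}=\deg{(\varphi^*(l))}$ and then discard $\deg(\varphi)\ge 1$, which is heavier than what the paper does here and quietly leans on the multiplicity bookkeeping of Lemmas \ref{p2-3} and \ref{p2-6}, which in the paper come \emph{after} this lemma and themselves use it. The paper's own argument for the inequality is a bare cardinality count with no multiplicities at all: after a coordinate change putting the generic line at $x_0=0$, a point of $l\cap\mathcal C(f,g,h)$ has some preimage $\mathfrak a$ at which the first coordinate $F$ vanishes to higher order than $\min$ of the three, which (since the three divisors $\operatorname{div}(F),\operatorname{div}(G),\operatorname{div}(H)$ share the same fractional elliptic part) forces $\mathfrak c'_F(\mathfrak a)\ge 1$; distinct intersection points have distinct preimages, and an integral effective divisor of degree $d$ has at most $d$ points in its support. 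This avoids any discussion of branch points, local vanishing orders at preimages, or genericity of the line beyond ``in general position,'' which is exactly the technical heart you flag as the obstacle. If you intend your write-up to stand where the lemma stands in the logical order, you should either substitute this support-counting argument or explicitly prove the $\deg(\varphi^*(l))$ identity first; as written, the dependence is circular in presentation even though the mathematics is sound. Your treatment of irreducibility (non-constancy from linear independence, image of a compact Riemann surface, Chow) and of the cuspidal refinement via $\mathfrak c_F$ matches the paper.
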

\begin{proof}  Note that $g/f$ and $h/h$ are rational functions on $\mathfrak R_\Gamma$ considered 
as a smooth irreducible projective curve over $\mathbb C$. Thus, the meromorphic map 
$\mathfrak a_z\longmapsto (f(z):  g(z): h(z))$ is actually a rational map 
$$
\mathfrak a_z\longmapsto (1:  g(z)/f(z): h(z)/g(z)).
$$
Hence,  it is regular since $\mathfrak R_\Gamma$ is smooth. The image of the map is clearly not constant. Hence, 
it is an irreducible curve in $\mathbb P^2$.

Let $l$ be the line in $\Bbb P^{2}$ in general position with respect 
to $\cal C(f, g,  h)$. Then, it intersects $\cal C(f, g,h)$ in different points a number of 
which 
is the degree of $\cal C(f, g,h)$. We can  change the coordinate system so that the line $l$ is 
$x_0=0$. 
In new coordinate system,  the map $\mathfrak a_z\mapsto \left(f(z): g(z) : h(z)\right)$ is of 
the form 
$$
\mathfrak a_z\mapsto \left(F(z): G(z) : H(z)\right),
$$
where $F, G, H$ are again linearly independent.
In particular, $F, G, H \neq 0$. 

We write this map  in the form
$$
\mathfrak a_z\mapsto \left(1: G(z)/F(z) : H(z)/F(z)\right).
$$
By Lemma \ref{prelim-1} (vi), we can write
\begin{align*}
&\text{div}{(G/F)}= \text{div}{(G)} - \text{div}{(F)}=
\mathfrak c'_{G}-\mathfrak c'_{F},\\
&\text{div}{(H/F)}= \text{div}{(H)} - \text{div}{(F)}=
\mathfrak c'_{H}-\mathfrak c'_{F}.\\
\end{align*}
We remark that the divisors $\mathfrak c'_{F}, \mathfrak c'_{G}$, and $\mathfrak c'_{H}$ are 
integral divisors of degree $\dim M_m(\Gamma)+ g(\Gamma)-1$ (see Lemma \ref{prelim-1} (vi)).

Now, we intersect $\cal C(f, g, h)$ with the line $x_0=0$. The intersection points of 
intersection are contained among the points in the support of 
$\mathfrak c'_{F}$. The claim about the degree  follows since the support can not have more than  
$\dim M_m(\Gamma)+ g(\Gamma)-1$ points. If we deal with the cusp forms, then we can slightly improve the last argument 
using (\ref{2div}). This proves the last claim of the lemma.
\end{proof}

\vskip .2in 
Next, we define the degree of the covering map $\varphi: \mathfrak R_\Gamma\longrightarrow 
\cal C(f, g, h)$ (see Lemma \ref{p2-3} below). This would be a standard fact (see page 31 of \cite{Miyake} for the summary)
if $\cal C(f, g, h)$ would have no singularities. We follow and modify the standard way of defining 
the degree of the map as explained in \cite{Miranda}. 
\vskip .2in

First, we observe  

\begin{Lem}\label{p2-2} Maintaing the assumptions of Lemma \ref{p2-1}, the preimage $\varphi^{-1}(q)$ 
is finite for any $q\in \cal C(f, g, h)$.
\end{Lem}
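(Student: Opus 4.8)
The plan is to show that $\varphi$ is a non-constant morphism of irreducible projective curves and then invoke the standard fact that such a morphism has finite fibres. Concretely, first I would recall from the proof of Lemma \ref{p2-1} that $\varphi$ is a regular map from the smooth projective curve $\mathfrak R_\Gamma$ to $\bbP^2$, hence its image $\cal C(f,g,h)$ is a closed irreducible subvariety, and since the image is not a point (the three forms $f,g,h$ being linearly independent, the quotients $g/f$, $h/f$ are non-constant rational functions on $\mathfrak R_\Gamma$), the image is an irreducible projective curve. Thus $\varphi\colon \mathfrak R_\Gamma\longrightarrow \cal C(f,g,h)$ is a dominant morphism between irreducible curves.

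The key step is then a dimension count: a dominant morphism of irreducible varieties $X\to Y$ with $\dim X=\dim Y$ has finite fibres over a dense open set, and in fact—because $X$ here is a projective curve and $\varphi$ is a non-constant morphism to a curve—every fibre is finite. The cleanest elementary argument I would give: if some fibre $\varphi^{-1}(q)$ were infinite, then, being a Zariski-closed subset of the irreducible curve $\mathfrak R_\Gamma$, it would have to be all of $\mathfrak R_\Gamma$, forcing $\varphi$ to be constant, a contradiction. (Equivalently, one can phrase this via the function-field extension $\bbC(\cal C(f,g,h))\hookrightarrow \bbC(\mathfrak R_\Gamma)$, which is a finite extension since both fields have transcendence degree one over $\bbC$, and finiteness of fibres of $\varphi$ over the smooth locus of $\cal C(f,g,h)$ follows from the properness of $\varphi$ together with this finiteness; but the closed-subset argument is more direct and needs less machinery.)

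I expect the only genuine point requiring care—and hence the main obstacle, modest as it is—is making sure the set-theoretic preimage of an arbitrary point $q$, including singular points of $\cal C(f,g,h)$, is handled: one should not rely on $\cal C(f,g,h)$ being smooth. But the closed-subset-of-an-irreducible-curve argument sidesteps this entirely, since it only uses that $\varphi^{-1}(q)$ is closed in $\mathfrak R_\Gamma$ (preimage of the closed point $q$ under a continuous map) and that $\mathfrak R_\Gamma$ is an irreducible curve, neither of which depends on the local structure of the image at $q$. So the proof is short: closedness of the fibre, irreducibility of $\mathfrak R_\Gamma$, and non-constancy of $\varphi$ together give finiteness.
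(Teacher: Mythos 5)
Your argument is correct, but it takes a different route from the paper. The paper fixes $q=(x_0:x_1:x_2)$, assumes without loss of generality $x_2\neq 0$, and observes that $\varphi^{-1}(q)$ is contained in a fibre of the non-constant map $f/h\colon \mathfrak R_\Gamma\to \bbP^1$ (non-constant by linear independence), then invokes the standard fact that a non-constant meromorphic function on a compact Riemann surface has finite fibres. You instead note that $\varphi^{-1}(q)$ is Zariski closed in the irreducible curve $\mathfrak R_\Gamma$, so if it were infinite it would be the whole curve, making $\varphi$ constant --- a contradiction. Both proofs are complete and short. The paper's version stays entirely inside the theory of compact Riemann surfaces (consistent with the author's stated preference for avoiding heavier machinery and citing only the summary in Miyake), while yours leans on the algebraic structure of $\mathfrak R_\Gamma$ as a projective curve and the dimension-theoretic fact that proper closed subsets of an irreducible curve are finite; the function-field and properness remarks you add in parentheses are unnecessary for the closed-subset argument, which is self-contained. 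One small point worth making explicit in your write-up: the closedness of the fibre uses that $\varphi$ is a morphism (not merely a rational map), which is exactly what the regularity statement in Lemma \ref{p2-1} provides, so you should cite it at that step.
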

\begin{proof}Let Let $q=(x_0:x_1:x_2)$.  Without loss of generality we may assume that 
$x_2\neq 0$. Then, $h$ is not identically zero.  Since $f$, $g$, and $h$ are linearly independent, 
the quotient $f/h$ is not constant. 
Now, by the standard theory of compact Riemann surfaces 
(see for example summary on pages 30--31 in \cite{Miyake}), the regular map $\mathfrak R_\Gamma\longrightarrow 
\mathbb P^1$ defined by $f/h$  has finite preimages. 
\end{proof}

\vskip .2in
We let $V$ be the complement of finitely many  points in 
$\cal C(f, g, h)$ where this curve is singular. We let $U$ be the preimage 
of $V$ in $\mathfrak R_\Gamma$. By Lemma \ref{p2-2}, it is a complement of finitely many points which maps to the set 
of singular points in  $\cal C(f, g, h)$.  Thus, both $U$ and $V$ are open Riemann surfaces and we have a holomorphic  
surjective map $\varphi|_U: U\rightarrow V$.

The  multiplicity $mult_p(\varphi|_U)$ of $p\in U$ of $\varphi|_U$ is defined in the usual way:   using suitable 
local coordinates, in a neighborhood of $p$, the map   $\varphi|_U$  is of the form $w\mapsto w^{mult_p(\varphi|_U)}$ 
($w=0$ corresponds to $p$). As usual, following  \cite{Miranda}, we let

\begin{equation}\label{degree}
\deg_{q}(\varphi|_U)=\sum_{p\in \varphi^{-1}(q)} mult_p(\varphi|_U), \ \ q\in U.
\end{equation}
The following lemma is a variant 
of the standard argument (i.e., the case when  $\cal C(f, g, h)$ has no singularities)

\begin{Lem}\label{p2-3}
The map $q\mapsto \deg_{q}(\varphi|_U)$ is constant on $U$. In this way we define a degree of 
of $\varphi$ (since $U$ is uniquely determined by $\varphi$), and  we denoted
by $\deg(\varphi)$.
\end{Lem}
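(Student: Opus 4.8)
\textbf{Proof proposal for Lemma \ref{p2-3}.}

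The plan is to follow the classical argument that the local degree of a non-constant holomorphic map between Riemann surfaces is locally constant, and to check that the presence of singular points on $\cal C(f,g,h)$ causes no trouble because they have already been removed. First I would recall that $\varphi|_U : U \to V$ is a non-constant holomorphic map between the open Riemann surfaces $U$ and $V$, with finite fibres by Lemma \ref{p2-2}. Since $V$ is now a \emph{smooth} open curve (we deleted the singular points), we may work entirely in the category of Riemann surfaces: near any $q\in V$ and any preimage $p\in\varphi^{-1}(q)$ there are local holomorphic coordinates in which $\varphi|_U$ looks like $w\mapsto w^{mult_p(\varphi|_U)}$. The function $d(q):=\deg_q(\varphi|_U)=\sum_{p\in\varphi^{-1}(q)}mult_p(\varphi|_U)$ is then well-defined on all of $V$.

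Next I would prove that $d$ is locally constant on $V$. Fix $q_0\in V$ and let $\varphi^{-1}(q_0)=\{p_1,\dots,p_r\}$. Choose disjoint coordinate discs $D_i$ around $p_i$ on which $\varphi|_U$ has the normal form $w\mapsto w^{m_i}$ with $m_i=mult_{p_i}(\varphi|_U)$; shrinking if necessary, we may take the images $\varphi(D_i)$ all to contain a common small disc $D$ around $q_0$. The key point is properness of $\varphi|_U$ near $q_0$: I claim that after shrinking $D$ we have $\varphi^{-1}(D)\subset \bigcup_i D_i$. This follows because $\varphi|_U$ is a finite (in particular closed, proper) holomorphic map onto its image — or more elementarily, if no such $D$ existed there would be a sequence $q_n\to q_0$ with preimages $p_n'$ outside $\bigcup D_i$; by properness of the full map $\mathfrak R_\Gamma\to\bbP^2$ (compactness of $\mathfrak R_\Gamma$) the $p_n'$ would accumulate at a point $p'\in\mathfrak R_\Gamma$ with $\varphi(p')=q_0$, hence $p'\in\varphi^{-1}(q_0)$, hence $p'=p_i$ for some $i$, contradicting $p_n'\notin D_i$ for large $n$. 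Granting this, for $q\in D\setminus\{q_0\}$ every preimage of $q$ lies in some $D_i$, and inside $D_i$ the equation $w^{m_i}=\zeta$ (with $\zeta$ the coordinate of $q$) has exactly $m_i$ simple solutions, each a multiplicity-one point of $\varphi|_U$. Summing, $d(q)=\sum_i m_i=d(q_0)$ for all $q\in D$. Thus $d$ is locally constant.

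Finally, since $V$ is the complement of finitely many points in the \emph{irreducible} projective curve $\cal C(f,g,h)$ (Lemma \ref{p2-1}), $V$ is connected, so the locally constant integer-valued function $d$ is globally constant on $V$; call this value $\deg(\varphi)$. It depends only on $\varphi$: the singular locus of $\cal C(f,g,h)$ and hence $V$, and therefore $U=\varphi^{-1}(V)$, are determined by $\varphi$ alone. The main obstacle in this argument is the properness/closedness step $\varphi^{-1}(D)\subset\bigcup_i D_i$; one must make sure that deleting the singular points of $\cal C(f,g,h)$ (and their preimages) does not destroy properness of $\varphi|_U$, which is why I phrase the accumulation argument using compactness of $\mathfrak R_\Gamma$ and then observe that the limit point cannot be one of the deleted points, since those map to singular points of $\cal C(f,g,h)$, not to $q_0\in V$.
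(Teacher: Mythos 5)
Your proof is correct and follows essentially the same route as the paper's: local normal forms $w\mapsto w^{m_i}$ on disjoint discs around the fibre, a compactness argument on $\mathfrak R_\Gamma$ to show no preimages escape the chosen discs, counting the $m_i$ simple solutions of $w^{m_i}=\zeta$, and connectedness of $V$ as the complement of finitely many points in the irreducible curve. Your explicit remark that the accumulation point cannot be one of the deleted points (since those map to singular points, not to $q_0\in V$) is a detail the paper leaves implicit, but the argument is the same.
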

\begin{proof} First,  $\cal C(f, g, h)$ is connected since it is a continuous 
image of the connected set $\mathfrak R_\Gamma$.   Then, since $U$ is a complement 
of finitely many points in $\cal C(f, g, h)$, $U$ is connected. Thus, it is enough to show that 
$q\mapsto \deg_{q}(\varphi|_U)$ is locally constant.

Let us show that $q\mapsto \deg_{q}(\varphi|U)$ is locally constant. Let us fix $q \in U$. For each of finitely many points 
$p\in \varphi^{-1}(q)$, we select a neighborhood (charts) $U_p$ of $p$, and a neighborhood $W$ of 
$q$ such that $\varphi(U_p)\subset W$, and   $\varphi$  is of the form
$w_p\mapsto w_p^{mult_p(\varphi|U)}$ ($w_p=0$ corresponds to $p$). By shrinking $U_p$, 
we may assume that they are all disjoint.  Then, for each  $q'\in W-\{q\}$, there are 
$mult_p(\varphi|U)$ different points from $U_p$ which maps to $q'$. Thus, there are 
$\deg_{q}(\varphi|U)=\sum_{p\in \varphi^{-1}(q)} mult_p(\varphi|_U)$ 
different points from the union $\cup_{p\in \varphi^{-1}(q)} U_p$ which maps 
to $q'$. 

We may think that the chart $W$ is given by an open circle $|u|< r$, $u=0$ corresponds to $q$, and 
we may define neighborhoods $W_\rho$, $\rho< r$, of $\rho$ which corresponds to  open circles $|u|< \rho$.
We show that for suitable small $\rho$, none of the preimages is left out i.e., for each 
 $q'\in W_\rho-\{q\}$ we have $\varphi^{-1}(q)\subset \cup_{p\in \varphi^{-1}(q)} U_p$.  If not, 
then  there is a  sequence of points 
such that $q_n\rightarrow q$, and there  is a sequence of points $p_n\in U -\cup_{p\in 
\varphi^{-1}(q)} U_p$ such that $\varphi(p_n)=q_n$. 

The key point is the fact that the sequence $p_n$ belongs to the complement of 
$\cup_{p\in \varphi^{-1}(q)}  U_p$ in $\mathfrak R_\Gamma$. But $\mathfrak R_\Gamma$ is compact so 
the sequence  $p_n$ has a convergent subsequence; we may assume that $p_n$ itself is a
convergent. Let $\lim_n p_n=p'$. Then,  $\varphi(p')=q$ and 
$p'\not \in \cup_{p\in \varphi^{-1}(q)}  U_p$. This is clearly a contradiction.

Thus, for suitable small $\rho$, preimages of $q'\in W_\rho-\{q\}$ 
consist of exactly $\deg_{q}(\varphi|U)$ different points. Clearly, in a neighborhood  of them 
the map is of the form $w\mapsto  w$, which implies 
$$
\deg_{q'}(\varphi|U)=\deg_{q}(\varphi|U),
$$ 
or otherwise there would exist a point $q{'}\in W_\rho-\{q\}$,  and a
preimage $p'$ of $q'$ in some $U_p$ such that the map is in the neighborhood of $p'$ is of the form 
$w\mapsto w^{l}$, $l\ge 1$. Then, for $q{''}\in W_\rho-\{q\}$ near $q'$, in the neighborhood of $p'$ 
the point $q{''}$ would have at least two preimages.  This would result in 
at least $mult_p(\varphi|_U)+1$ of preimages of $q{''}$ in $U_p$ which is impossible. This proves the lemma
\end{proof}

\vskip .2in

Now, we state and prove the main result of the present section.

\begin{Thm}\label{p2-120}  Assume that  $m\ge 4$ is an even integer such that 
$\dim M_m(\Gamma)\ge 3$. Let $f, g, h\in M_m(\Gamma)$ be linearly independent.
Then, we have the following:
$$
\deg{(\varphi)}\cdot \deg{C(f, g, h)}=\dim M_m(\Gamma)+ g(\Gamma)-1 - \sum_{\mathfrak a\in \mathfrak R_\Gamma} 
\min{\left(\mathfrak c'_{f}(\mathfrak a),  \mathfrak c'_{g}(\mathfrak a), 
\mathfrak c'_{h}(\mathfrak a) \right)}.
$$
Moreover, if $f, g, h\in S_m(\Gamma)$, then 
$$
\deg{(\varphi)}\cdot \deg{C(f, g, h)}=\dim S_m(\Gamma)+ g(\Gamma)-1 - \sum_{\mathfrak a\in \mathfrak R_\Gamma} 
\min{\left(\mathfrak c_{f}(\mathfrak a),  \mathfrak c_{g}(\mathfrak a), 
\mathfrak c_{h}(\mathfrak a) \right)}.
$$
In particular, if $\varphi$ is birational, then $\deg{(\varphi)}=1$ (since it is generically injective), and we have 
in either case a simple formula for the  degree of $\deg{C(f, g, h)}$ in terms of $f, g$, and $h$.
\end{Thm}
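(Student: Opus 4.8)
The plan is to reduce the theorem to a careful bookkeeping of divisors under the covering map $\varphi\colon\mathfrak R_\Gamma\to\cal C(f,g,h)$, using the geometric interpretation of degree as the number of intersection points of $\cal C(f,g,h)$ with a generic line $l\subset\mathbb P^2$ (this is the same mechanism already used in the proof of Lemma \ref{p2-1}, but now we must keep track of multiplicities rather than merely bounding the support). First I would, after a projective change of coordinates as in Lemma \ref{p2-1}, replace $f,g,h$ by linearly independent $F,G,H\in M_m(\Gamma)$ so that the chosen generic line is $x_0=0$; the intersection $l\cap\cal C(f,g,h)$ is then cut out by $F$, and the defining polynomial $P$ of $\cal C(f,g,h)$ restricted to $l$ has its zeros exactly at the finitely many points in the support of $\mathfrak c'_F$ that actually lie on the curve, counted with the appropriate multiplicities. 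The total number of such points with multiplicity is $\deg\cal C(f,g,h)$, by Bézout / the definition of degree of a plane curve.

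Next I would relate this count upstairs. The key is a local identity: near a point $p\in U$ lying over a nonsingular point $q=\varphi(p)$ of $\cal C(f,g,h)$, in suitable local coordinates $\varphi$ has the form $w\mapsto w^{\mathrm{mult}_p(\varphi|_U)}$, so the pullback $\varphi^*(l\cap\cal C)$ near $q$ has multiplicity $\mathrm{mult}_p(\varphi|_U)\cdot(\text{intersection mult. of }l\text{ and }\cal C\text{ at }q)$. Summing over $p\in\varphi^{-1}(q)$ gives $\deg(\varphi)$ times the local intersection multiplicity, by Lemma \ref{p2-3}. On the other hand, the pullback divisor $\varphi^*(l\cap\cal C)$ on $\mathfrak R_\Gamma$ is, by the discussion at the end of Section \ref{prelim} and Lemma \ref{prelim-1}(vi), exactly $\mathrm{div}(F)$ minus the base locus common to $F,G,H$ — i.e.\ $\mathfrak c'_F-\sum_{\mathfrak a}\min(\mathfrak c'_F(\mathfrak a),\mathfrak c'_G(\mathfrak a),\mathfrak c'_H(\mathfrak a))\mathfrak a$, since $\varphi$ is only defined after dividing out the gcd of the three forms (the elliptic fractional parts cancel because they are the same for $F,G,H$). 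Taking degrees: $\deg(\varphi)\cdot\deg\cal C(f,g,h)=\deg\mathfrak c'_F-\sum_{\mathfrak a}\min(\mathfrak c'_F,\mathfrak c'_G,\mathfrak c'_H)=\dim M_m(\Gamma)+g(\Gamma)-1-\sum_{\mathfrak a}\min(\mathfrak c'_f,\mathfrak c'_g,\mathfrak c'_h)$, using that $\deg\mathfrak c'_F=\dim M_m(\Gamma)+g(\Gamma)-1$ by Lemma \ref{prelim-1}(vi) and that $\min(\mathfrak c'_F,\mathfrak c'_G,\mathfrak c'_H)$ is coordinate-independent (a generic linear change does not affect the common vanishing order). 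The cuspidal case is identical, replacing $\mathfrak c'$ by $\mathfrak c$ throughout and invoking (\ref{2div}), which uniformly subtracts $\sum_{\text{cusp}}\mathfrak b$ from each divisor.

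The main obstacle I expect is the justification that $\varphi^*(l\cap\cal C(f,g,h))=\mathfrak c'_F-\sum_{\mathfrak a}\min(\cdots)\mathfrak a$ holds with the correct multiplicities at \emph{all} relevant points, including points lying over singular points of $\cal C(f,g,h)$ and over the base locus. The cleanest route is to argue that both sides are effective divisors of the same degree whose supports and multiplicities agree away from the finitely many bad points, and that the generic choice of $l$ — already built into the hypothesis that $l$ is in general position — avoids the images of singular points and of the base locus entirely, so that every point of $l\cap\cal C$ is a smooth point with all preimages in $U$ and with local intersection multiplicity one. Concretely: a generic line misses the finite singular set and the finite image of $\mathrm{Supp}(\min(\mathfrak c'_F,\mathfrak c'_G,\mathfrak c'_H))$, and meets $\cal C(f,g,h)$ transversally at $\deg\cal C(f,g,h)$ distinct smooth points; over each such point the fibre of $\varphi|_U$ contributes its multiplicities summing to $\deg(\varphi)$; hence $\varphi^{-1}(l\cap\cal C)$ consists of $\deg(\varphi)\cdot\deg\cal C(f,g,h)$ points counted with multiplicity, and this is precisely the degree of the divisor of $F$ after removing the common base-locus contribution. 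Once one commits to "generic $l$" in this strong sense, the remaining steps are the routine divisor-degree computations supplied by Lemma \ref{prelim-1}.
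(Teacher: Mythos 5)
Your proposal is correct and follows essentially the same route as the paper: both compute $\deg(\varphi)\cdot\deg{\mathcal{C}(f,g,h)}$ as the degree of the pullback of a generic line avoiding the singular locus, identify that pullback divisor with $\mathfrak c'_{F}-\sum_{\mathfrak a}\min\left(\mathfrak c'_{f}(\mathfrak a),\mathfrak c'_{g}(\mathfrak a),\mathfrak c'_{h}(\mathfrak a)\right)\mathfrak a$ as in (\ref{p2-40}), and conclude via Lemma \ref{prelim-1}(vi) and (\ref{2div}). The only difference is presentational: where you assert that a generic line meets the curve transversally in $\deg{\mathcal{C}(f,g,h)}$ distinct smooth points, the paper proves this elementarily with a resultant/discriminant argument (Lemma \ref{p2-6}), and it packages the base-locus subtraction as the linear-system divisor $\mathfrak d_k$ rather than performing the subtraction coordinatewise.
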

\begin{proof} The formula in the case $f, g, h\in S_m(\Gamma)$ follows at once from the general case using 
(\ref{2div}).  We consider the general case $f, g, h\in M_m(\Gamma)$.

In the first step of the proof, we associate the linear system to the map $\varphi$.
Assume that  $k\in M_m(\Gamma)$ is non--zero. We write the map in the form
$$
\varphi(\mathfrak a_z)=(f(z):  g(z): h(z))=(F(z):G(z):H(z)),
$$
where $F=f/k$, $G=g/k$, and $H=h/k$.  Then, we define the divisor $\mathfrak d_k$ in the usual way \cite{Miranda} using
$$
\mathfrak d_k(\mathfrak a)=-\min{\left(\text{div}(F)(\mathfrak a), \text{div}(G)(\mathfrak a), 
 \text{div}(H)(\mathfrak a) 
\right)}, \ \ \mathfrak a\in \mathfrak R_\Gamma.
$$

Using Lemma \ref{prelim-1} (vi), we compute 
\begin{equation}\label{p2-40}
\begin{aligned}
{\mathfrak d_k} & = -\sum_{\mathfrak a\in \mathfrak R_\Gamma} 
\min{\left(\text{div}(F)(\mathfrak a), \text{div}(G)(\mathfrak a),  \text{div}(H)(\mathfrak a) 
\right)}\mathfrak a\\
& = -\sum_{\mathfrak a\in \mathfrak R_\Gamma} 
\min{\left(\mathfrak c'_{f}(\mathfrak a)- \mathfrak c'_{k}(\mathfrak a),
\mathfrak c'_{g}(\mathfrak a)- \mathfrak c'_{k}(\mathfrak a), 
\mathfrak c'_{h}(\mathfrak a)- \mathfrak c'_{k}(\mathfrak a)\right)}\mathfrak a\\
& =\mathfrak c'_{k} - \sum_{\mathfrak a\in \mathfrak R_\Gamma} 
\min{\left(\mathfrak c'_{f}(\mathfrak a),  \mathfrak c'_{g}(\mathfrak a), 
\mathfrak c'_{h}(\mathfrak a) \right)}\mathfrak a.
\end{aligned}
\end{equation}

The computation in (\ref{p2-40}) shows that different $k$'s determine the same linear system $|\mathfrak d_k|$. 
We shall select $k=h$ in the sequel, and let 
\begin{equation}\label{p2-41}
\mathfrak d=\mathfrak d_h.
\end{equation}

\vskip .2in 

Let  $l\subset \Bbb P^2$ be a line. Let us write 
$\varphi^*(l)$ be the hyperplane divisor of the map $\varphi: \mathfrak R_\Gamma 
\rightarrow \cal C(f, g, h)$. By definition, if we write the equation of the line 
$l$ as follows: $a_0x_0+a_1x_1+a_2x_2=0$, then 
\begin{equation}\label{p2-42}
\varphi^*(l)=\text{div}(a_0 F+ a_1 G+ a_2)+\mathfrak d.
\end{equation}

\vskip .2in
If $\cal C(f, g, h)$ is smooth (i.e., an embedded Riemann surface), then one would 
define the
intersection divisor $\text{div}(l)$ of a line $l$ as follows. Let $q\in l\cap \cal 
C(f, g, h)$.
We select a coordinate function $x_i$ which does not vanish at $q$ and let 
$\text{div}(l)(q)=\ord_q (l/x_i)$. This is idependent of the coordinate function used. 
We have
$$
\text{div}(l)=\sum_{q\in l\cap \cal C(f, g, h)} \text{div}(l)(q) q.
$$
Still assuming that $\cal C(f, g, h)$ is smooth, we have
\begin{equation}\label{p2-4}
\deg{(\text{div}(l))}=\deg{C(f, g, h)},
\end{equation}
and 
\begin{equation}\label{p2-5}
\deg{(\mathfrak d)}=\deg{(\varphi^*(l))}=\deg{(\varphi)}\cdot \deg{(\text{div}(l))}.
\end{equation}

\vskip .2in

Of course, in general $C(f, g, h)$ can have singularities outside $V$ (introduced before the statement of 
Lemma \ref{p2-3}). So,  we modify the proof of above formulas such that  they hold in generality we need. First of all, 
we restrict ourselves to the lines  such that  
$l\cap \cal C(f, g, h)\subset V$. Then, we may define $\text{div}(l)$ as before. 

\vskip .2in

\begin{Lem}\label{p2-6} Let $l\subset \Bbb P^2$ be any line such that 
$l\cap \cal C(f, g, h)\subset V$. Then, (\ref{p2-4}) and (\ref{p2-5}) hold.
Moreover, we can select a line $l\subset \Bbb P^2$ such that 
$l\cap \cal C(f, g, h)\subset V$ and $l\cap \cal C(f, g, h)$ consists of 
$\deg{C(f, g, h)}$ different points.
\end{Lem}
\begin{proof} With the aid of Lemma \ref{p2-3}, we easily 
adapt the proof of  (\cite{Miranda}, Proposition 4.23) to prove (\ref{p2-5}). 
We leave details to the reader.

To show  (\ref{p2-4}), we adapt the classical argument with resultants. 
We may assume $(0:0:1)\not \in \cal C(f, g, h)$. 
We look at the family of lines $l_\lambda$ given by $x_0-\lambda x_1=0$ that pass 
through this point. Since there are just finitely many points in
$\cal C(f, g, h)-V$, for all but finitely many $\lambda$'s we have the following: 
$l_\lambda\cap \cal C(f, g, h)\subset V$.

We observe that $x_0-\lambda x_1$ and $x_1$ never vanish simultaneously on 
$\cal C(f, g, h)$ since  $(0:0:1)\not \in \cal C(f, g, h)$. Thus, 
$\text{div}(x_0-\lambda x_1)$ is determined by  $x_0/x_1-\lambda$ at any point of 
intersection of $l_\lambda$ and $\cal C(f, g, h)$. 

The intersection of  $x_0-\lambda x_1$ with $\cal C(f, g, h)$  is of the form 
$(\lambda: 1: \mu)$. If we let $P$ be the irreducible homogeneous polynomial which 
locus is $\cal C(f, g, h)$,  then the equation for $\mu$ is given 
by $P(\lambda, 1, \mu)=0$. Since $(0:0:1)\not \in \cal C(f, g, h)$, we may write
(up to a non--zero constant depending on $P$ only)
$$
P(\lambda, 1, \mu)=\mu^{\deg{C(f, g, h)}}+\sum_{i=0}^{\deg{C(f, g, h)}-1} a_i(\lambda)\mu^i,
$$
where $a_i$ is polynomial in $\lambda$. The discriminant of $P$ with respect to $\mu$ 
(that is, a resultant of $P(\lambda, 1, \mu)$ and $\frac{\partial}{\partial \mu}
P(\lambda, 1, \mu)$ ) is a polynomial of $\lambda$ which does not vanish identically.\footnote
{Otherwise,  if $X_0, X_1, X_2$ denote independent variables, then the resultant of 
$P(X_0, X_1, X_2)$ and $\frac{\partial }{\partial X_2}P(X_0, X_1, X_2)$  which is a homogeneous
polynomial in $X_0, X_1$  must be zero. But then $P(X_0, X_1, X_2)$ and $\frac{\partial }{\partial X_2}P(X_0, X_1, X_2)$ 
would have a common irreducible factor. This factor is obviously 
$P(X_0, X_1, X_2)$ since it is  irreducible and of higher 
degree than its derivative. This a contradiction since this polynomial has the degree $>$ than its derivative.}

 At all but finitely many points $\lambda$, we have 
that the equation for $\mu$ 
\begin{equation}\label{p2-7}
P(\lambda, 1, \mu)=\mu^{\deg{C(f, g, h)}}+\sum_{i=0}^{\deg{C(f, g, h)}-1} a_i(\lambda)\mu^i=0,
\end{equation}
satisfies 
$$
P(\lambda, 1, \mu)=0\implies \frac{\partial }{\partial X_2}P(\lambda, 1, \mu)\neq 0.
$$
This means that for such $\lambda$, we have $\deg{C(f, g, h)}$ different solutions for 
$\mu$. By the Implicit function theorem, $\frac{\partial }{\partial X_2}P(\lambda, 1, \mu)\neq 0$
means that near the point $(\lambda: 1: \mu)$, the local coordinate 
is $x_0/x_1$. Hence,
$$
\ord_{(\lambda: 1: \mu)}(x_0-\lambda x_1)=1.
$$
Finally, for $\lambda$ which makes the discriminant non--vanishing, we have
$$
\text{div}(x_0-\lambda x_1)=\sum_{(\lambda: 1: \mu)} (\lambda: 1: \mu),
$$
where $\mu$ runs over all solutions of (\ref{p2-7}). This implies 
$$
\deg{(\text{div}(x_0-\lambda x_1))}=\deg{C(f, g, h)}.
$$
This proves the claim about the degree.
\end{proof}

\vskip .2in 
Having completed the proof of Lemma \ref{p2-6}, the proof of Theorem \ref{p2-120} is easy to complete.
Let $l\subset \mathbb P^2$ be any line. Then, by Lemma \ref{p2-6},  (\ref{p2-4}) and (\ref{p2-5}) hold. 
So, if we combine them with (\ref{p2-40}) (with $k=h$), we obtain

\begin{align*}
\deg{(\varphi)}\cdot \deg{C(f, g, h)}&= \deg{\left(\mathfrak c'_{h} - \sum_{\mathfrak a\in \mathfrak R_\Gamma} 
\min{\left(\mathfrak c'_{f}(\mathfrak a),  \mathfrak c'_{g}(\mathfrak a), 
\mathfrak c'_{h}(\mathfrak a) \right)}\mathfrak a\right)}\\
&=
\deg{(\mathfrak c'_{h})}-  \sum_{\mathfrak a\in \mathfrak R_\Gamma} 
\min{\left(\mathfrak c'_{f}(\mathfrak a),  \mathfrak c'_{g}(\mathfrak a), 
\mathfrak c'_{h}(\mathfrak a) \right)}\\
&=\dim M_m(\Gamma)+ g(\Gamma)-1 - \sum_{\mathfrak a\in \mathfrak R_\Gamma} 
\min{\left(\mathfrak c'_{f}(\mathfrak a),  \mathfrak c'_{g}(\mathfrak a), 
\mathfrak c'_{h}(\mathfrak a) \right)}.
\end{align*}
The last equality follows by Lemma \ref{prelim-1} (vi).
\end{proof}

\section{A generic construction of birational maps}\label{gen}

In this section, we let $t_m=\dim S_m(\Gamma)$. The goal of this section is to construct 
various models of the curve $\mathfrak R_\Gamma$, where $\Gamma$ is any Fuchsian group of the 
first kind. 

\vskip .2in

\begin{Lem}\label{ls-thm}  Let $m\ge 4$ be an even integer such that $t_m\ge g(\Gamma)+2$. 
Then,  the field of rational functions
$\Bbb C (\mathfrak R_\Gamma)$ is generated over $\Bbb  C$ by the rational functions
$f_i/f_0$, $1\le i \le t_m-1$, where $f_0, \ldots, f_{t_m-1}$ is a basis of $S_m(\Gamma)$.
\end{Lem}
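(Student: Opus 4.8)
The plan is to exhibit a basis of $S_m(\Gamma)$ whose associated quotients give a birational map onto the image curve, and to detect birationality via the degree formula in Theorem \ref{p2-120}. First I would recall that the complete linear system $|\mathfrak c_f|$ attached to a nonzero $f\in S_m(\Gamma)$ has degree $\dim S_m(\Gamma)+g(\Gamma)-1$ (see (\ref{2div}) and Lemma \ref{prelim-1}(vi)), and that for any basis $f_0,\dots,f_{t_m-1}$ of $S_m(\Gamma)$ the map $\mathfrak a_z\mapsto (f_0(z):\cdots:f_{t_m-1}(z))$ into $\mathbb P^{t_m-1}$ is, after clearing the common contribution of $\mathfrak c_{f_0}$, the map associated to a linear subsystem of $|\mathfrak c_{f_0}|$. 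Since $\deg(\mathfrak c_{f_0})=t_m+g(\Gamma)-1\ge 2g(\Gamma)+1$ by hypothesis, this divisor is very ample, so the full system $|\mathfrak c_{f_0}|$ embeds $\mathfrak R_\Gamma$; the only issue is whether the particular subsystem spanned by the $f_i/f_0$ (equivalently by all of $S_m(\Gamma)$, which spans the whole system since $\text{div}(f_i/f_0)+\mathfrak c_{f_0}\in|\mathfrak c_{f_0}|$ runs over a spanning set) is large enough to separate points and tangents.

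The key step is therefore: the $\mathbb C$-span of the divisors $\{\text{div}(f_i)+(\text{common cusp part})\}$, i.e. the image of $S_m(\Gamma)$ in $H^0(\mathfrak R_\Gamma,\calO(\mathfrak c_{f_0}))$, is all of $H^0$. This is essentially the statement $\dim S_m(\Gamma)=\dim H^0(\mathfrak R_\Gamma,\calO(\mathfrak c_{f_0}))$, which follows from Riemann--Roch: $\deg(\mathfrak c_{f_0})=t_m+g(\Gamma)-1\ge 2g(\Gamma)+1>2g(\Gamma)-2$, so the canonical term vanishes and $\dim H^0=\deg(\mathfrak c_{f_0})-g(\Gamma)+1=t_m$. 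Hence the map $\mathfrak R_\Gamma\to\mathbb P^{t_m-1}$ given by the basis is (up to the linear isomorphism $\mathbb P H^0$) the complete embedding associated to the very ample divisor $\mathfrak c_{f_0}$, so it is a closed embedding, in particular birational onto its image. Therefore $\mathbb C(\mathfrak R_\Gamma)$ is generated by the $f_i/f_0$.

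I expect the main obstacle to be the bookkeeping identifying $S_m(\Gamma)$, as a space of modular forms, with $H^0(\mathfrak R_\Gamma,\calO(\mathfrak c_{f_0}))$: one must check that $f\mapsto \text{div}(f/f_0)+\mathfrak c_{f_0}=\mathfrak c_f$ sets up the isomorphism of $S_m(\Gamma)$ with the sections of the line bundle $\calO(\mathfrak c_{f_0})$, using Lemma \ref{prelim-1}(vi) and (\ref{2div}) to control the elliptic and cuspidal contributions uniformly (the elliptic fractional parts cancel in every difference $\text{div}(f)-\text{div}(f_0)$, and the forced $+1$ at each cusp is exactly what is removed in passing from $\mathfrak c'$ to $\mathfrak c$). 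Once that identification is in hand, the very-ampleness of a divisor of degree $\ge 2g(\Gamma)+1$ on a curve is classical and the conclusion is immediate; alternatively, one can avoid invoking very-ampleness directly and instead argue that $\deg(\varphi)=1$ in Theorem \ref{p2-120} by a base-point and separation count, but the Riemann--Roch route above is cleaner.
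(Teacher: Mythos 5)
Your argument is correct and follows the same overall route as the paper: both identify the map given by a basis of $S_m(\Gamma)$ with the map attached to the complete linear system $|\mathfrak c_{f_0}|$, and both conclude by observing that $\deg \mathfrak c_{f_0}=t_m+g(\Gamma)-1\ge 2g(\Gamma)+1$ makes this divisor very ample, hence the map an embedding. The one genuine difference is in how the completeness of the linear system is established, i.e.\ the identification $L(\mathfrak c_{f_0})=\{h/f_0:\ h\in S_m(\Gamma)\}$: the paper cites an external result (Proposition 2-10 of \cite{Muic1}, proved there by a direct computation with orders at elliptic points and cusps, and valid already for $m\ge 4$, $t_m\ge 1$), whereas you deduce it from Riemann--Roch by a dimension count, using that $\deg\mathfrak c_{f_0}>2g(\Gamma)-2$ forces $\dim L(\mathfrak c_{f_0})=t_m$ and that $h\mapsto h/f_0$ is an injection of the $t_m$-dimensional space $S_m(\Gamma)$ into $L(\mathfrak c_{f_0})$ (the inclusion $\mathrm{div}(h/f_0)+\mathfrak c_{f_0}=\mathfrak c_h\ge 0$ being exactly the cancellation of elliptic fractional parts and cusp contributions you describe). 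Your shortcut is self-contained and cleaner under the hypothesis $t_m\ge g(\Gamma)+2$ of this lemma, but it is strictly weaker than the cited proposition, which the author needs in other places without the degree bound; within the scope of this lemma, though, nothing is lost and the proof is complete.
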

\begin{proof} This is (\cite{Muic1}, Corollary  3-7). Let us sketch the proof. For $f\in S_m(\Gamma)$, we 
consider $L(\mathfrak c_f)$ which is by definition the space of all $F\in \Bbb C (\mathfrak R_\Gamma)$ such that
$\text{div}(F)+\mathfrak c_f\ge 0$. In (\cite{Muic1}, Proposition 2-10) we show that 
$L(\mathfrak c_f)=\left\{g/f; \ \ g\in S_m(\Gamma)\right\}$ assuming only that $m\ge 4$ and $t_m\ge 1$.
The proof of this is similar to the proof of (\cite{Muic}, Theorem 4-15) using computations on pages $17$ and $18$ of
\cite{Muic}. Then, we construct the embedding $\mathfrak R_\Gamma\longrightarrow \bbP^{t_m-1}$ using the holomorphic map
$$
\mathfrak a_z\mapsto \left(f_0(z): \cdots : f_{t_m-1}(z)\right)= \left(f_0(z)/f(z): \cdots : f_{t_m-1}(z)/f(z)\right).
$$
A computation similar to that in (\ref{p2-40}) shows that
this map is attached to the linear system $|\mathfrak c_f|$ as we demonstrate this in (\cite{Muic1}, Theorem 3-3).
If $t_m\ge g(\Gamma)+2$, then $\mathfrak c_f$ is very ample. So, the map is an embedding.  The claim of the lemma 
is an obvious consequence of this.
\end{proof}

\vskip .2in

\begin{Lem}\label{prelim-8} Let $\xi\in X$ or let $\xi$ be a cusp for $\Gamma$. Let
$m\ge 4$ be an even integer such that $t_m\ge g(\Gamma)+1$. Then,  
there exists $f\in S_m(\Gamma)$ such that $\mathfrak c_{f}(\mathfrak a_\xi)= 0$.
\end{Lem}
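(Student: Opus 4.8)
\emph{Plan of proof.} Work on the compact Riemann surface $\mathfrak R_\Gamma$, regarded as a smooth projective curve of genus $g(\Gamma)$, and write $P=\mathfrak a_\xi$ for the point in question. Fix once and for all a nonzero $f_0\in S_m(\Gamma)$ (which exists since $t_m\ge g(\Gamma)+1\ge 1$). The plan is to reinterpret the assertion ``$\mathfrak c_f(P)=0$ for some $f$'' as the statement that $P$ is not a base point of the complete linear system $|\mathfrak c_{f_0}|$, and then to verify base-point-freeness at $P$ by a Riemann--Roch dimension count that uses the hypothesis $t_m\ge g(\Gamma)+1$ to kill the relevant ``dual'' spaces.

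First I would set up the dictionary between cusp forms and divisors. As recalled in the proof of Lemma \ref{ls-thm} (via \cite{Muic1}, Proposition 2-10, which needs only $m\ge 4$ and $t_m\ge 1$), one has $L(\mathfrak c_{f_0})=\{\,g/f_0:\ g\in S_m(\Gamma)\,\}$. Moreover, since by Lemma \ref{prelim-1}(vi) the elliptic correction $\text{div}(h)-\mathfrak c'_h$ is the same divisor for every nonzero $h\in M_m(\Gamma)$, for any $g\in S_m(\Gamma)$ we get
$$
\text{div}(g/f_0)+\mathfrak c_{f_0}=\bigl(\text{div}(g)-\text{div}(f_0)\bigr)+\mathfrak c'_{f_0}-\!\!\sum_{\mathfrak b\ \mathrm{cusp}}\!\!\mathfrak b=\mathfrak c'_g-\!\!\sum_{\mathfrak b\ \mathrm{cusp}}\!\!\mathfrak b=\mathfrak c_g .
$$
Hence the effective divisors of the form $\mathfrak c_g$, $g\in S_m(\Gamma)\setminus\{0\}$, are exactly the members of the complete linear system $|\mathfrak c_{f_0}|$, and what we must prove is precisely that $P$ is not a base point of $|\mathfrak c_{f_0}|$, i.e. $\ell(\mathfrak c_{f_0}-P)=\ell(\mathfrak c_{f_0})-1$.

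Next I would run the Riemann--Roch computation. Let $K$ be a canonical divisor on $\mathfrak R_\Gamma$, of degree $2g(\Gamma)-2$. By Lemma \ref{prelim-1}(vi) and \eqref{2div}, $\deg\mathfrak c_{f_0}=t_m+g(\Gamma)-1$, so $\deg(K-\mathfrak c_{f_0})=g(\Gamma)-1-t_m$ and $\deg(K-\mathfrak c_{f_0}+P)=g(\Gamma)-t_m$; by the hypothesis $t_m\ge g(\Gamma)+1$ both of these degrees are strictly negative, whence $\ell(K-\mathfrak c_{f_0})=\ell(K-\mathfrak c_{f_0}+P)=0$. Applying Riemann--Roch to $\mathfrak c_{f_0}$ and to $\mathfrak c_{f_0}-P$ then gives $\ell(\mathfrak c_{f_0})=t_m$ and $\ell(\mathfrak c_{f_0}-P)=t_m-1$. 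Therefore there exists $F\in L(\mathfrak c_{f_0})\setminus L(\mathfrak c_{f_0}-P)$; writing $F=f/f_0$ with $f\in S_m(\Gamma)$, the divisor $\mathfrak c_f=\text{div}(F)+\mathfrak c_{f_0}$ is effective but does not dominate $P$, i.e. $\mathfrak c_f(\mathfrak a_\xi)=0$, as desired.

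There is no serious obstacle here; the argument is short once the dictionary is in place. The one step that deserves genuine care is the identity $\text{div}(g/f_0)+\mathfrak c_{f_0}=\mathfrak c_g$ together with the inclusion $L(\mathfrak c_{f_0})\supseteq\{g/f_0:g\in S_m(\Gamma)\}$, since this is what lets us pass from a base-point-free section back to an actual cusp form; I would also note explicitly that the cited description of $L(\mathfrak c_{f_0})$ is valid under our weaker hypothesis $t_m\ge g(\Gamma)+1$ because it only requires $t_m\ge 1$ (it is the very ampleness in Lemma \ref{ls-thm} that needs $t_m\ge g(\Gamma)+2$, which we do not invoke).
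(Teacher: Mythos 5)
Your argument is correct, and it is genuinely different in character from what the paper does: the paper's ``proof'' of Lemma \ref{prelim-8} is only a citation of (\cite{Muic1}, Lemma 2-9), described there as a straightforward generalization of computations in (\cite{Muic}, Section 4), whereas you supply a complete self-contained derivation. Your two key steps both check out: the identity $\text{div}(g/f_0)+\mathfrak c_{f_0}=\mathfrak c_g$ follows from Lemma \ref{prelim-1}(vi) because the elliptic correction $\text{div}(h)-\mathfrak c'_h$ is independent of $h$, and the Riemann--Roch count is right, since $\deg\mathfrak c_{f_0}=t_m+g(\Gamma)-1$ forces $\deg(K-\mathfrak c_{f_0}+P)=g(\Gamma)-t_m\le-1<0$ under the hypothesis $t_m\ge g(\Gamma)+1$, giving $\ell(\mathfrak c_{f_0}-P)=t_m-1<t_m=\ell(\mathfrak c_{f_0})$. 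One small remark: you do not actually need the full equality $L(\mathfrak c_{f_0})=\{g/f_0\}$ from (\cite{Muic1}, Proposition 2-10); the inclusion $\{g/f_0: g\in S_m(\Gamma)\}\subseteq L(\mathfrak c_{f_0})$ (which you prove directly) together with your computation $\ell(\mathfrak c_{f_0})=t_m=\dim S_m(\Gamma)$ already forces equality, so your proof could be made independent of that citation as well. What your approach buys is transparency and a clearly identified role for the hypothesis $t_m\ge g(\Gamma)+1$ (it kills the two special divisors $K-\mathfrak c_{f_0}$ and $K-\mathfrak c_{f_0}+P$); what the paper's citation buys is brevity and consistency with the companion papers, where the same computation is presumably carried out once and reused.
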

\begin{proof} This is (\cite{Muic1}, Lemma 2-9). The proof of this lemma is a straightforward generalization of
computations made in (\cite{Muic}, Section 4, pages $17$ and $18$). 
\end{proof}

\vskip .2in

\begin{Lem}\label{prelim-8-cor} Assume that  $m\ge 4$ is an even integer. 
Let $W\not\subset S_m(\Gamma)$ be a subspace which separates the points of $\mathfrak R_\Gamma$.
Select a basis $f_0, \ldots, f_{s-1}$ for $W$. Then, for each $\xi\in X$ or a cusp for $\Gamma$, 
there exists $i$ such that $\mathfrak c'_{f_i}(\mathfrak a_\xi)= 0$.
\end{Lem}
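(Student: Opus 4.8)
The plan is to reduce everything to the ultrametric (valuation) property of the local orders $\nu_{\mathfrak a}$, combined with the single structural input that, by Lemma \ref{prelim-1}(vi), $\mathfrak c'_f$ is obtained from $\mathrm{div}(f)$ by subtracting a correction supported at the elliptic points whose coefficients depend only on $m$ and the $e_{\mathfrak a}$, and not on $f$. Writing $\delta(\mathfrak a)=\frac{m}{2}(1-1/e_{\mathfrak a})-\left[\frac{m}{2}(1-1/e_{\mathfrak a})\right]$ when $\mathfrak a$ is elliptic and $\delta(\mathfrak a)=0$ otherwise, this says that for every $f\in M_m(\Gamma)$, $f\neq 0$, and every $\mathfrak a\in\mathfrak R_\Gamma$ one has $\mathfrak c'_f(\mathfrak a)=\nu_{\mathfrak a}(f)-\delta(\mathfrak a)$, and that this is a non--negative integer because $\mathfrak c'_f\ge 0$ is an integral divisor.

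First I would record the elementary inequality that underlies the whole argument: for a fixed $\xi\in\mathbb H$ or a fixed cusp $\xi$ for $\Gamma$, and for any $g_1,\dots,g_r\in M_m(\Gamma)$ and scalars $c_1,\dots,c_r$ not all zero,
$$
\nu_{\mathfrak a_\xi}\Bigl(\textstyle\sum_{j}c_jg_j\Bigr)\ \ge\ \min_{j:\,c_j\neq 0}\nu_{\mathfrak a_\xi}(g_j).
$$
If $\xi\in\mathbb H$ this is just the statement that the order of vanishing at $\xi$ of a holomorphic function is a valuation (scalars do not change it, and the order of a sum is at least the minimum of the orders), followed by division by the fixed positive integer $e_\xi$. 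If $\xi$ is a cusp it follows in the same way from the fact that $f\mapsto (f|_m\sigma^{-1})(\sigma.z)$ is linear, so that after choosing a single $\sigma$ with $\sigma.\xi=\infty$ the Fourier coefficients add and $\nu_\xi$ merely records the index of the first nonvanishing one. Subtracting the constant $\delta(\mathfrak a_\xi)$ from both sides, the same inequality holds verbatim with $\mathfrak c'_{(-)}(\mathfrak a_\xi)$ in place of $\nu_{\mathfrak a_\xi}(-)$.

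Then, given $\xi$, I would invoke the hypothesis that $W\not\subset S_m(\Gamma)$ separates the points of $\mathfrak R_\Gamma$: there is some $f^\ast\in W$, $f^\ast\neq 0$, with $\mathfrak c'_{f^\ast}(\mathfrak a_\xi)=0$, i.e.\ $\nu_{\mathfrak a_\xi}(f^\ast)=\delta(\mathfrak a_\xi)$. Expand $f^\ast=\sum_i c_i f_i$ in the given basis; since each $f_i\neq 0$ and $\mathfrak c'_{f_i}\ge 0$ we have $\nu_{\mathfrak a_\xi}(f_i)\ge\delta(\mathfrak a_\xi)$ for every $i$, so the inequality above yields
$$
\delta(\mathfrak a_\xi)=\nu_{\mathfrak a_\xi}(f^\ast)\ \ge\ \min_{i:\,c_i\neq 0}\nu_{\mathfrak a_\xi}(f_i)\ \ge\ \delta(\mathfrak a_\xi).
$$
Hence equality holds throughout, so there is an index $i$ (necessarily with $c_i\neq 0$) for which $\nu_{\mathfrak a_\xi}(f_i)=\delta(\mathfrak a_\xi)$, that is $\mathfrak c'_{f_i}(\mathfrak a_\xi)=0$, which is exactly the assertion.

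I do not expect a genuine obstacle here. The two points needing a little care are that $\mathfrak c'_f(\mathfrak a)-\nu_{\mathfrak a}(f)$ is truly independent of $f$ (this is precisely Lemma \ref{prelim-1}(vi)) and that the valuation inequality is legitimate at cusps and not only at points of $\mathbb H$ (this is the linearity of the slash operator together with the definition of $\nu_\xi$ at a cusp); both are routine. As a sanity check, the conclusion is compatible with the basis--independence of the separation property noted after its definition, since the argument used only that $f_0,\dots,f_{s-1}$ span $W$.
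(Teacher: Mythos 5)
Your argument is correct and is essentially the paper's proof unpacked: the paper phrases it contrapositively by noting that if every $\mathfrak c'_{f_i}(\mathfrak a_\xi)\ge 1$ then the span of the $f_i/f$ sits inside the Riemann--Roch space $L(\mathfrak c'_f-\mathfrak a_\xi)$, which contradicts the existence of a separating $g$ — and the fact that $L(\mathfrak c'_f-\mathfrak a_\xi)$ is closed under linear combinations is exactly your ultrametric inequality together with the $f$-independence of the elliptic correction from Lemma \ref{prelim-1}(vi). Both proofs rest on the same two ingredients you isolated, so there is nothing to add.
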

\begin{proof} Let $f\in W$, $f\neq 0$, be an arbitrary form. Consider the linear space 
$$
L(\mathfrak c'_f)=\{F\in \Bbb C(\mathfrak R_\Gamma); \ \ \text{div}(F)+\mathfrak c'_f\ge 0\}.
$$
Then, it contains a linear subspace $W_1$ consisting of all quotients $g/f$, $g\in W$. This is so, 
since,  for $g\neq 0$, by Lemma \ref{prelim-1} (vi) we obtain
$$
\text{div}\left(\frac{g}{f}\right) + \mathfrak c'_f=\text{div}(g)-\text{div}(f)+ \mathfrak c'_f=
\mathfrak c'_g-\mathfrak c'_f+\mathfrak c'_f=\mathfrak c'_g\ge 0.
$$
Assume that the claim of the lemma is not true, then for all $i$ we have $\mathfrak c'_{f_i}(\mathfrak a_\xi)\ge 1$. 
So, we have 
$$
\text{div}\left(\frac{f_i}{f}\right) + \mathfrak c'_f-\mathfrak a_\xi=\text{div}(f_i)-\text{div}(f)+ \mathfrak c'_f=
\mathfrak c'_{f_i}-\mathfrak c'_f+\mathfrak c'_f -\mathfrak a_\xi =\mathfrak c'_{f_i}-\mathfrak a_\xi\ge 0.
$$
Thus, $f_i/f\in L(\mathfrak c'_f -\mathfrak a_\xi)$. This implies that $W_1\subset  L(\mathfrak c'_f -\mathfrak a_\xi)$.
But there exists $g\in W$ such that $\mathfrak c'_{g}(\mathfrak a_\xi)= 0$.
Then $\mathfrak c'_{g}-\mathfrak a_\xi\ge 0$ is clearly not true.
\end{proof}

\vskip .2in 

\begin{Lem}\label{prelim-8-cor-1} Assume that  $m\ge 4$ is an even integer. 
Let $W\subset S_m(\Gamma)$ be a subspace which separates the points of $\mathfrak R_\Gamma$.
Select a basis $f_0, \ldots, f_{s-1}$ for $W$. Then, for each $\xi\in X$ or a cusp for $\Gamma$, 
there exists $i$ such that $\mathfrak c_{f_i}(\mathfrak a_\xi)= 0$.
\end{Lem}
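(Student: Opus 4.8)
\textbf{Proof proposal for Lemma \ref{prelim-8-cor-1}.}

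The plan is to run the exact same argument as in Lemma \ref{prelim-8-cor}, only replacing the divisor $\mathfrak c'_f$ attached to a form $f \in M_m(\Gamma)$ by the divisor $\mathfrak c_f = \mathfrak c'_f - \sum_{\mathfrak b \text{ cusp}} \mathfrak b$ attached to a cusp form $f \in S_m(\Gamma)$ as in (\ref{2div}). The only point that needs checking is that the displayed divisor identities used in Lemma \ref{prelim-8-cor} survive this substitution, and this is immediate because passing from $\mathfrak c'$ to $\mathfrak c$ subtracts the \emph{same} cuspidal divisor $\sum_{\mathfrak b}\mathfrak b$ from every form, so it cancels in any difference $\mathfrak c_g - \mathfrak c_f = \mathfrak c'_g - \mathfrak c'_f$.

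First I would fix a nonzero $f \in W$ and consider the linear space $L(\mathfrak c_f) = \{F \in \mathbb C(\mathfrak R_\Gamma) : \text{div}(F) + \mathfrak c_f \ge 0\}$. For any nonzero $g \in W$, using Lemma \ref{prelim-1}(vi) together with (\ref{2div}) one gets
$$
\text{div}\!\left(\frac{g}{f}\right) + \mathfrak c_f = \text{div}(g) - \text{div}(f) + \mathfrak c_f = \mathfrak c'_g - \mathfrak c'_f + \mathfrak c_f = \mathfrak c'_g - \sum_{\mathfrak b}\mathfrak b = \mathfrak c_g \ge 0,
$$
since $\mathfrak c_g \ge 0$ by the remark after (\ref{2div}). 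Hence $g/f \in L(\mathfrak c_f)$, so the subspace $W_1 = \{g/f : g \in W\}$ lies in $L(\mathfrak c_f)$.

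Next I would argue by contradiction exactly as before. Suppose $\mathfrak c_{f_i}(\mathfrak a_\xi) \ge 1$ for all $i$. Then for each $i$,
$$
\text{div}\!\left(\frac{f_i}{f}\right) + \mathfrak c_f - \mathfrak a_\xi = \mathfrak c_{f_i} - \mathfrak a_\xi \ge 0,
$$
so $f_i/f \in L(\mathfrak c_f - \mathfrak a_\xi)$, and therefore $W_1 \subset L(\mathfrak c_f - \mathfrak a_\xi)$. But since $W \subset S_m(\Gamma)$ separates the points of $\mathfrak R_\Gamma$, there is some $g \in W$ with $\mathfrak c_g(\mathfrak a_\xi) = 0$; for this $g$, $\text{div}(g/f) + \mathfrak c_f - \mathfrak a_\xi = \mathfrak c_g - \mathfrak a_\xi$, which is not $\ge 0$ (its coefficient at $\mathfrak a_\xi$ is $-1$), contradicting $g/f \in W_1 \subset L(\mathfrak c_f - \mathfrak a_\xi)$. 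This forces some $i$ with $\mathfrak c_{f_i}(\mathfrak a_\xi) = 0$.

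There is essentially no obstacle here: the lemma is the cuspidal analogue of Lemma \ref{prelim-8-cor} and the proof is a line-by-line transcription, the one thing to watch being that the definition of "separates the points" in the cuspidal case (Section \ref{gen}) is stated with $\mathfrak c_f$ rather than $\mathfrak c'_f$, which is exactly what makes the final contradiction step go through. I would simply remark that the proof is identical to that of Lemma \ref{prelim-8-cor} with $\mathfrak c'$ replaced by $\mathfrak c$ throughout, invoking (\ref{2div}) to see that all the divisor cancellations are unchanged.
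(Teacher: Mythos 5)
Your proposal is correct and is exactly the paper's approach: the paper's proof of this lemma is the single remark that, in view of (\ref{2div}), it has the same proof as Lemma \ref{prelim-8-cor}, which is precisely the line-by-line transcription you carry out. Your explicit check that the cuspidal divisor $\sum_{\mathfrak b}\mathfrak b$ cancels in every difference $\mathfrak c_g-\mathfrak c_f$ is the (routine) verification the paper leaves to the reader.
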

\begin{proof} In view of (\ref{2div}) this has the same proof as the previous lemma. 
\end{proof}
\vskip .2in 

\begin{Lem}\label{p2-8}  
Assume that  $m\ge 4$ is an even integer. Let $W\subset M_m(\Gamma)$, $\dim W\ge 3$,  be a subspace which generates
 the field of rational 
functions $\mathbb C(\mathfrak R_\Gamma)$, and separates the points of $\mathfrak R_\Gamma$.
Then there exists  a non--empty Zariski open set $\cal U\subset W$ such that for any 
$h\in \cal U$, we have that the field of rational functions
$\Bbb C (\mathfrak R_\Gamma)$ is generated over $\Bbb  C$ by the rational functions
$g/f$ and $h/f$, and  $\supp{(\mathfrak c_{f})} \cap \supp{(\mathfrak c_{h})}=\emptyset $ if 
$W\subset S_m(\Gamma)$ or $\supp{(\mathfrak c'_{f})} \cap \supp{(\mathfrak c'_{h})}=\emptyset $ if 
$W\not\subset S_m(\Gamma)$.
\end{Lem}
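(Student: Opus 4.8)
The plan is to produce the Zariski open set $\mathcal{U}$ as an intersection of two kinds of ``good'' open sets: one guaranteeing that $g/f$ and $h/f$ generate $\mathbb{C}(\mathfrak{R}_\Gamma)$, and one guaranteeing the disjointness of supports. First I would fix the linearly independent pair $f, g \in W$ (note that $\dim W \ge 3$ so such a pair exists, but here $f, g$ are given). Since $W$ generates $\mathbb{C}(\mathfrak{R}_\Gamma)$, there is some basis $f_0, \dots, f_{s-1}$ of $W$ for which $\mathfrak{a}_z \mapsto (f_0(z) : \cdots : f_{s-1}(z))$ is birational onto its image in $\mathbb{P}^{s-1}$; equivalently $\mathbb{C}(\mathfrak{R}_\Gamma) = \mathbb{C}(f_1/f_0, \dots, f_{s-1}/f_0)$. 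The condition that a three-term projection $(f : g : h)$ remains birational is that the induced map $\mathfrak{R}_\Gamma \to \mathbb{P}^2$ is still generically injective; by a standard generic-projection argument (projecting $\mathbb{P}^{s-1}$ to $\mathbb{P}^2$ from a generic linear center, then composing with the embedding/birational map), the set of $h \in W$ for which $\mathfrak{a}_z \mapsto (f(z) : g(z) : h(z))$ fails to be birational is contained in a proper Zariski-closed subset of $W$. I would phrase this via the incidence variety of pairs of points $\mathfrak{a}_z \ne \mathfrak{a}_{z'}$ with $f(z)/f(z') = g(z)/g(z')$ (a curve or finite set in $\mathfrak{R}_\Gamma \times \mathfrak{R}_\Gamma$ once $f, g$ are fixed, after deleting the locus where the birational map already separates them) and noting that the extra condition $h(z)/h(z') = $ that common value cuts out, for each such exceptional pair, a hyperplane in $W$; since the bad pairs form at most a one-dimensional family while we only need to avoid a codimension-$\ge 1$ condition generically, the union of bad $h$ is contained in a proper closed subset. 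This gives a non-empty Zariski open $\mathcal{U}_1 \subset W$ on which $g/f, h/f$ generate the field.

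Next I would handle the support-disjointness. Work in the case $W \subset S_m(\Gamma)$ (the case $W \not\subset S_m(\Gamma)$ is identical with $\mathfrak{c}'$ in place of $\mathfrak{c}$, using Lemma \ref{prelim-8-cor} in place of Lemma \ref{prelim-8-cor-1}). The divisor $\mathfrak{c}_f$ is a fixed effective divisor of degree $\dim S_m(\Gamma) + g(\Gamma) - 1$, so its support is a finite set of points $\mathfrak{a}_1, \dots, \mathfrak{a}_r \in \mathfrak{R}_\Gamma$. For each $i$, since $\text{div}(h) - \text{div}(f)$ and $\mathfrak{c}'_h, \mathfrak{c}'_f$ are related as in Lemma \ref{prelim-1}(vi), we have $\mathfrak{c}_h(\mathfrak{a}_i) = \nu_{\mathfrak{a}_i}(h) - (\text{correction depending only on }\mathfrak{a}_i)$, so the condition $\mathfrak{c}_h(\mathfrak{a}_i) = 0$ is equivalent to $\nu_{\mathfrak{a}_i}(h)$ taking its minimal allowed value, i.e. $h$ does not vanish ``to excess'' at $\mathfrak{a}_i$. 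The key input is Lemma \ref{prelim-8-cor-1}: because $W$ separates the points of $\mathfrak{R}_\Gamma$, for each $i$ there exists $f^{(i)} \in W$ with $\mathfrak{c}_{f^{(i)}}(\mathfrak{a}_i) = 0$. The set $\{h \in W : \mathfrak{c}_h(\mathfrak{a}_i) \ge 1\}$ is a proper linear subspace of $W$ (it is the kernel of the nonzero linear functional on $W$ given by the appropriate ``leading coefficient at $\mathfrak{a}_i$'' evaluation, nonzero precisely because $f^{(i)}$ lies outside it). Hence $\mathcal{U}_2 := W \setminus \bigcup_{i=1}^r \{h : \mathfrak{c}_h(\mathfrak{a}_i) \ge 1\}$ is a non-empty Zariski open subset of $W$, and for $h \in \mathcal{U}_2$ the support of $\mathfrak{c}_h$ is disjoint from $\{\mathfrak{a}_1, \dots, \mathfrak{a}_r\} = \supp(\mathfrak{c}_f)$.

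Finally I would set $\mathcal{U} = \mathcal{U}_1 \cap \mathcal{U}_2$, which is non-empty and Zariski open in $W$ (intersection of two non-empty Zariski opens in an irreducible affine space), and observe that any $h \in \mathcal{U}$ satisfies both conclusions. I expect the main obstacle to be the first part — making rigorous that the non-birational locus is contained in a \emph{proper} Zariski-closed subset of $W$. The cleanest route is probably to exploit that $W$ already contains a birational system: fix any $h_0 \in W$ making $(f : g : h_0)$ birational if one exists directly from the given generating property, or, more robustly, pass through the full embedding/birational map $\Phi : \mathfrak{R}_\Gamma \to \mathbb{P}^{s-1}$ attached to $W$ and use that a generic linear projection $\mathbb{P}^{s-1} \dashrightarrow \mathbb{P}^2$ restricts to a birational map on any fixed curve, so the projections that fail form a proper closed subset of the Grassmannian of centers; pulling this back to the affine chart of choices of the third coordinate $h$ (with $f, g$ fixed as the other two) gives the desired proper closed bad locus. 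One must check the bad locus is not all of $W$, which is exactly where the hypothesis that $W$ generates the function field (rather than merely separating points) is used.
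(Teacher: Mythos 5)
Your second step (disjointness of supports) is essentially the paper's own argument: for each point $\mathfrak a$ of the finite set $\supp(\mathfrak c_f)$, the separation hypothesis via Lemma \ref{prelim-8-cor-1} (or \ref{prelim-8-cor}) yields a nonzero linear functional on $W$, namely $h\mapsto (h/f_{i_{\mathfrak a}})(\mathfrak a)$ for a suitable $f_{i_{\mathfrak a}}$ not vanishing to excess at $\mathfrak a$, and the paper takes the product of these over $\mathfrak a\in\supp(\mathfrak c_f)$ to cut out the open set. Your first step, however, has a genuine gap as written. You claim that since the pairs $(\mathfrak a_z,\mathfrak a_{z'})$ not separated by $(f:g)$ form a one-dimensional family and each imposes a hyperplane condition $H_{(z,z')}=\{h\in W:\ h(z)f(z')-h(z')f(z)=0\}$, the union of bad $h$ is contained in a proper closed subset. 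That dimension count fails: a one-parameter family of hyperplanes in $W$ can sweep out all of $W$ (the union of all lines through the origin already covers $\mathbb C^2$). The argument can be repaired using a fact you mention but do not exploit: birationality only requires \emph{generic} injectivity, so $h$ is bad only if it lies in $H_{(z,z')}$ for a dense set of pairs on some component $Z_i$ of the incidence curve dominating $\mathfrak R_\Gamma$. The bad locus is then the finite union over $i$ of the linear subspaces $\bigcap_{(z,z')\in Z_i}H_{(z,z')}$, each proper because $W$ generates $\mathbb C(\mathfrak R_\Gamma)$. Your fallback via generic linear projections has the same issue in disguise, since the centers are constrained to the fixed subspace $\{f=g=0\}$ and are not generic in the Grassmannian.

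The paper avoids the incidence geometry entirely and you may prefer its route: set $K=\mathbb C(g/f)$ and $L=\mathbb C(\mathfrak R_\Gamma)$, a finite separable extension of $K$ generated by $f_2/f_0,\dots,f_{s-1}/f_0$ where $f_0=f$, $f_1=g$. The primitive element theorem gives one $(\lambda_2,\dots,\lambda_{s-1})$ with $L=K\left((\lambda_2f_2+\cdots+\lambda_{s-1}f_{s-1})/f_0\right)$, and the full open set is the non-vanishing locus of the resultant $R(\lambda_2,\dots,\lambda_{s-1})$ of the characteristic polynomial of multiplication by $(\sum_{i\ge 2}\lambda_i f_i)/f_0$ on $L$ over $K$ and its $X$-derivative; non-emptiness of $\{R\neq 0\}$ is exactly the primitive element theorem, and adding arbitrary $\lambda_0 f_0+\lambda_1 f_1$ changes nothing. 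This produces the required Zariski open set directly, with no separate argument about which pairs must be avoided.
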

\begin{proof} For the matter of notation, we consider the case $W\subset S_m(\Gamma)$. In the other case, one needs 
to replace all $\mathfrak c$ with  $\mathfrak c'$.

We select a basis $f_0, \ldots, f_{s-1}$ of $W$, $\dim W=s\ge 3$ such that 
$f=f_0$ and $g=f_1$. By the assumption on $W$, the field of rational functions
$\Bbb C (\mathfrak R_\Gamma)$ is generated over $\Bbb  C$ by all $f_i/f_0$, $1\le i\le s$. 
We let 
$$
K=\Bbb C(f_1/f_0),
$$
and 
$$
L= \Bbb C (\mathfrak R_\Gamma)=
\Bbb C(f_1/f_0, \ldots, f_{s-1}/f_0)=K(f_2/f_0, \ldots, f_{s-1}/f_0).
$$
By Lemma \ref{p2-1},
$f_2/f_0, \ldots, f_{s-1}/f_0$ are all algebraic over $K$. Thus, the field $L$ 
is a finite algebraic extension of $K$. It is also obviously separable. Hence, 
by a variant of a proof of Primitive Element 
Theorem there exists $\lambda_2, \ldots, \lambda_{s-1}\in \Bbb C$ such that  
$$
L= K((\lambda_2 f_2+\cdots + \lambda_{s-1} f_{s-1})/f_0)=
\Bbb C(f_1/f_0, (\lambda_2 f_2+\cdots + \lambda_{s-1} f_{s-1})/f_0).
$$

Now, we explain the systematic way to get them all.  For $(\lambda_2, \ldots, \lambda_{s-1})\in \Bbb C^{s-2}$, 
we consider  the characteristic polynomial
$$
P(X, \lambda_2, \ldots, \lambda_{s-1})=
 \det{\left(X\cdot Id_L- T_{(\lambda_2 f_2+\cdots + \lambda_{s-1} f_{s-1})/f_0}\right)},
$$
where $T_x: L\rightarrow L$, is an $K$--endomorphism given by $T_x(y)=xy$, and 
$Id_L$ is identity on $L$. The resultant $R$ with respect to the variable $X$ 
of the polynomial $P(X, \lambda_2, \ldots, \lambda_{s-1})$ and its 
derivative $\frac{\partial}{\partial X}P(X, \lambda_2, \ldots, \lambda_{s-1})$
is a polynomial in $\lambda_2, \ldots, \lambda_{s-1}$. 

If 
$R(\lambda_2, \ldots, \lambda_{s-1})\neq 0$, then 
$(\lambda_2 f_2+\cdots + \lambda_{s-1} f_{s-1})/f_0$ generate $L$ over $K$. 
Indeed, the characteristic polynomial $P(X, \lambda_2, \ldots, \lambda_{s-1})$ has no
multiple roots in the algebraic closure of $L$. It also has the same roots as the 
minimal polynomial of $(\lambda_2 f_2+\cdots + \lambda_{s-1} f_{s-1})/f_0$. Thus, 
they are equal. Since the degree of the characteristic polynomial is equal to $[L:K]$, 
this element must be primitive. The first part of the proof assures that 
the resultant is not identically zero so that these considerations make sense. 

Hence, primitive elements for the extension $K\subset L$ are constructed from the set of all 
$$
h=\lambda_2 f_2+\cdots + \lambda_{s-1} f_{s-1} \in \mathbb C f_2\oplus \cdots \oplus  \mathbb C f_{s-1}
$$
which belong to  the Zariski open set defined by 
\begin{equation}\label{p2-50}
R(\lambda_2, \ldots, \lambda_{s-1})\neq 0.
\end{equation}
It does not affect the thing if we enlarge $h$ to be 
$$
h=\lambda_0f_0+ \lambda_1f_1+\lambda_2 f_2+\cdots + \lambda_{s-1} f_{s-1},
$$
where $\lambda_0, \lambda_1$ are abitrary complex numbers.  This means that $h$ can be selected
from the  Zariski open subset of  $W$ given by (\ref{p2-50}), where we consider 
the resultant $R$ as a polynomial of all variables $\lambda_0, \ldots, \lambda_{s-1}$ but which does not depend 
on the first two variables.

Now, we prove the last part of the lemma.  By the second assumption on $W$ and Lemma \ref{prelim-8-cor-1}, for each  
$\mathfrak a \in \supp{(\mathfrak c_{f_0})}$ there exists 
$i_{\mathfrak a}\in \{1, \ldots, s-1\}$
such that $\mathfrak a \not\in \supp{(\mathfrak c_{f_{i_{\mathfrak a}}})}$. Then, the rational functions 
$f_i/f_{i_{\mathfrak a}}$ are defined at $\mathfrak a$ since we have 
the following  (see Lemma \ref{prelim-1} (vi))
$$
\text{div}\left(\frac{f_i}{f_{i_{\mathfrak a}}}\right)=\text{div}(f_i)-\text{div}(f_{i_{\mathfrak a}})=
\mathfrak c_{f_i}-\mathfrak c_{f_{i_{\mathfrak a}}},
$$
where the right--most difference consists of effective divisors,
so that the point 
$\mathfrak a$ does not belong to the divisors of poles because of $\mathfrak a \not\in 
\supp{(\mathfrak c_{f_{i_{\mathfrak a}}})}$.

Now, we can form the 
following product of non--zero linear forms in $(\lambda_0, \ldots, \lambda_{s-1})\in \Bbb C^{s}$
$$
\prod_{\mathfrak a \in \supp{(\mathfrak c_{f_0})}} \left(\lambda_0 
\frac{f_0}{f_{i_{\mathfrak a}}}(\mathfrak a) +
\lambda_1 \frac{f_1}{f_{i_{\mathfrak a}}}(\mathfrak a) +\cdots + \lambda_{s-1} 
\frac{f_{s-1}}{f_{i_{\mathfrak a}}}(\mathfrak a) 
\right).
$$

For $\sum_{i=0}^{s-1}\lambda_i f_i$ in a Zariski open subset of $W$,  defined by making this product not equal to zero,
neither of $\mathfrak a \in \supp{(\mathfrak c_{f_0})}$ belong to the divisor of zeroes 
$\text{div}_0\left((\sum_{i=0}^{s-1} \lambda_i f_i)/f_{i_{\mathfrak a}}\right)$ of the corresponding rational function.
Since $\mathfrak a \not\in \supp{(\mathfrak c_{f_{i_{\mathfrak a}}})}$ and 
$$
\text{div}_0\left(\frac{\sum_{i=0}^{s-1}\lambda_i f_i}{f_{i_{\mathfrak a}}}\right)
-\text{div}_\infty\left(\frac{\sum_{i=0}^{s-1}\lambda_i f_i}{f_{i_{\mathfrak a}}}\right)=
\text{div}\left(\frac{\sum_{i=0}^{s-1}\lambda_i f_i}{f_{i_{\mathfrak a}}}\right)=
\mathfrak c_{\sum_{i=0}^{s-1}\lambda_i f_i}-\mathfrak c_{f_{i_{\mathfrak a}}},
$$
where the right most expression is a difference of two effective divisors, 
we get 
$$
\mathfrak a \in \supp{(\mathfrak c_{f_0})} \implies 
\mathfrak a \not\in \supp{(\mathfrak c_{\lambda_0  f_0+\lambda_1 f_1+\cdots + 
\lambda_{s-1} f_{s-1}})}.
$$
Combining this with (\ref{p2-50}), we complete the proof of the lemma.
\end{proof}

\vskip .2in 
\begin{Thm}\label{p2-thm}
Assume that  $m\ge 4$ is an even integer. Let $W\subset M_m(\Gamma)$, $\dim W\ge 3$,  be a subspace which generates
 the field of rational 
functions $\mathbb C(\mathfrak R_\Gamma)$, and separates the points of $\mathfrak R_\Gamma$. For example, if 
$\dim S_m(\Gamma) \ge \max{(g(\Gamma)+2, 3)}$, then we can take $W=S_m(\Gamma)$. Let $f, g \in W$ be linearly independent.
Then there exists  a non--empty Zariski open set $\cal U\subset W$ such that for any 
$h\in \cal U$ we have the following:
\begin{itemize}
\item[(i)] $\mathfrak R_\Gamma$ is birationally equivalent to  $\cal C(f, g, h)$, and 
\item[(ii)] $\cal C(f, g, h)$ has degree equal to $\dim M_m(\Gamma)+g(\Gamma)-1$ (resp., 
$\dim S_m(\Gamma)+g(\Gamma)-1$) if $W\not\subset S_m(\Gamma)$ (resp., $W\subset S_m(\Gamma)$).
\end{itemize}
\end{Thm}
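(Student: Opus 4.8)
The plan is to combine the two previous lemmas of this section---the primitive element construction (Lemma \ref{p2-8}) and the degree formula (Theorem \ref{p2-120})---together with the remark just before Lemma \ref{p2-1} about how $M_k(\Gamma)$ and divisors transform under $GL_2^+(\mathbb R)$, though in fact the latter will not be needed here. First I would dispose of the example in the statement: if $\dim S_m(\Gamma)\ge \max(g(\Gamma)+2,3)$, then by Lemma \ref{ls-thm} the quotients $f_i/f_0$ of a basis of $S_m(\Gamma)$ generate $\mathbb C(\mathfrak R_\Gamma)$, so $W=S_m(\Gamma)$ generates the field of rational functions; and by Lemma \ref{prelim-8} (applicable since $t_m\ge g(\Gamma)+1$) for each $\xi$ there is $f\in S_m(\Gamma)$ with $\mathfrak c_f(\mathfrak a_\xi)=0$, so $W=S_m(\Gamma)$ separates the points of $\mathfrak R_\Gamma$. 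Thus the hypotheses of the theorem are met, and we may assume a general $W$ satisfying them.

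Next, given linearly independent $f,g\in W$, apply Lemma \ref{p2-8} to obtain a non-empty Zariski open $\mathcal U\subset W$ such that for every $h\in\mathcal U$ the field $\mathbb C(\mathfrak R_\Gamma)$ is generated over $\mathbb C$ by $g/f$ and $h/f$, and moreover $\supp(\mathfrak c_f)\cap\supp(\mathfrak c_h)=\emptyset$ (if $W\subset S_m(\Gamma)$) or $\supp(\mathfrak c'_f)\cap\supp(\mathfrak c'_h)=\emptyset$ (if $W\not\subset S_m(\Gamma)$). Shrinking $\mathcal U$ if necessary, we may also arrange that $f,g,h$ remain linearly independent (the locus where $h\in\mathbb C f\oplus\mathbb C g$ is a proper linear subspace of $W$, hence its complement is Zariski open dense, and intersecting with the previous $\mathcal U$ keeps it non-empty). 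Fix such an $h$.

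For part (i): the map $\varphi\colon\mathfrak a_z\mapsto(f(z):g(z):h(z))=(1:g(z)/f(z):h(z)/f(z))$ induces, on function fields, the inclusion $\mathbb C(\mathcal C(f,g,h))=\mathbb C(g/f,h/f)\hookrightarrow\mathbb C(\mathfrak R_\Gamma)$; since $g/f$ and $h/f$ generate $\mathbb C(\mathfrak R_\Gamma)$ by the choice of $h$, this inclusion is an equality, so $\varphi$ is birational and $\deg(\varphi)=1$ by Theorem \ref{p2-120} (it is generically injective). For part (ii): apply the degree formula of Theorem \ref{p2-120}. In the case $W\not\subset S_m(\Gamma)$ we get
$$
\deg\mathcal C(f,g,h)=\dim M_m(\Gamma)+g(\Gamma)-1-\sum_{\mathfrak a\in\mathfrak R_\Gamma}\min\bigl(\mathfrak c'_f(\mathfrak a),\mathfrak c'_g(\mathfrak a),\mathfrak c'_h(\mathfrak a)\bigr),
$$
and the sum vanishes: at any point $\mathfrak a$, either $\mathfrak c'_f(\mathfrak a)=0$ (if $\mathfrak a\notin\supp(\mathfrak c'_f)$) or else $\mathfrak a\in\supp(\mathfrak c'_f)$, whence $\mathfrak a\notin\supp(\mathfrak c'_h)$ by disjointness, so $\mathfrak c'_h(\mathfrak a)=0$; in either case the minimum is $0$. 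The case $W\subset S_m(\Gamma)$ is identical with $\mathfrak c'$ replaced by $\mathfrak c$ and $\dim M_m(\Gamma)$ by $\dim S_m(\Gamma)$, using the second formula of Theorem \ref{p2-120}. The only mild point requiring care is verifying that the several Zariski open conditions---primitivity of $h/f$ over $\mathbb C(g/f)$, disjointness of the divisor supports, and linear independence of $f,g,h$---can be imposed simultaneously; but each cuts out a non-empty Zariski open subset of the irreducible variety $W$, so their intersection is again non-empty, and this is exactly the content packaged in Lemma \ref{p2-8} together with the elementary linear-independence observation above. Hence no real obstacle remains; the theorem follows.
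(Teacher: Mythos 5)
Your argument follows the paper's proof essentially verbatim: the example case via Lemmas \ref{ls-thm} and \ref{prelim-8}, the open set $\mathcal U$ and generation of the function field via Lemma \ref{p2-8} (giving birationality and hence $\deg(\varphi)=1$), and then the degree formula of Theorem \ref{p2-120} with the correction sum vanishing because $\supp(\mathfrak c'_f)\cap\supp(\mathfrak c'_h)=\emptyset$ (resp.\ with $\mathfrak c$ in the cuspidal case). Your extra remark about shrinking $\mathcal U$ to keep $f,g,h$ linearly independent (so that Lemma \ref{p2-1} applies) is a small point of care the paper leaves implicit, but it does not change the route.
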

\begin{proof} First of all, Lemmas \ref{ls-thm} and \ref{prelim-8} assure that $W=S_m(\Gamma)$
 generates the field of rational 
functions $\mathbb C(\mathfrak R_\Gamma)$, and separates the points of $\mathfrak R_\Gamma$ whenever 
$\dim S_m(\Gamma) \ge \max{(g(\Gamma)+2, 3)}$.

Now, go back to the general subspace which satisfies these conditions. 
We select the set $\cal U\subset W$ given by Lemma \ref{p2-8}.
Since, by Lemma \ref{p2-8}, the field of rational functions $\Bbb C (\mathfrak R_\Gamma)$ is generated over 
$\Bbb  C$ by the rational functions $g/f$ and $h/f$, we immediatelly get that the map given by Lemma \ref{p2-1}
is birational equivalence. This proves (i). 

Now, we prove (ii). Since, $\varphi$ is birational, by Theorem \ref{p2-120},  we get 
$\deg{(\varphi)}=1$, and, if $W\not\subset S_m(\Gamma)$, then  

\begin{align*}
\deg{C(f, g, h)}=\deg{(\varphi)}\cdot \deg{C(f, g, h)}& =\dim M_m(\Gamma)+g(\Gamma)-1 - \sum_{\mathfrak a\in \mathfrak R_\Gamma} 
\min{\left(\mathfrak c'_{f}(\mathfrak a),  \mathfrak c'_{g}(\mathfrak a), 
\mathfrak c'_{h}(\mathfrak a) \right)}\\
&=\dim M_m(\Gamma)+g(\Gamma)-1,
\end{align*}
since
$\supp{(\mathfrak c'_{f})} \cap \supp{(\mathfrak c'_{h})}=\emptyset $. The case $W\subset S_m(\Gamma)$ is treated 
similarly. 
\end{proof}

\section{Application to modular equation}\label{appmod}

In this section we complete the proof of Theorem \ref{ithm} following the approach explained in the introduction.
We start the proof with the following well--known lemma:

\begin{Lem}\label{appmod-1}
Assume that $\Gamma=SL_2(\mathbb Z)$. Then, we have the following:
\begin{align*}
& \text{div}{(\Delta)}=\mathfrak a_\infty\\
&\text{div}{(E_4)} =\frac13 \mathfrak a_{(1+\sqrt{-3})/2}\\
&\text{div}{(E^3_4)} = \mathfrak a_{(1+\sqrt{-3})/2}.
\end{align*}
\end{Lem}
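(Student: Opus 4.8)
The statement to prove is Lemma~\ref{appmod-1}: for $\Gamma = SL_2(\mathbb{Z})$,
\begin{align*}
& \text{div}{(\Delta)}=\mathfrak a_\infty,\\
&\text{div}{(E_4)} =\tfrac13 \mathfrak a_{(1+\sqrt{-3})/2},\\
&\text{div}{(E^3_4)} = \mathfrak a_{(1+\sqrt{-3})/2}.
\end{align*}

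The plan is to combine the classical valence formula (the $k/12$ formula) with the known structure of the elliptic points of $SL_2(\mathbb{Z})$, together with the special normalizations built into the definition of $\nu_\xi(f)$ in Section~\ref{prelim}. First I would recall that $SL_2(\mathbb{Z})$ has a single cusp $\mathfrak a_\infty$, one elliptic point of order $e=2$ at $\mathfrak a_i$, and one elliptic point of order $e=3$ at $\mathfrak a_\rho$ where $\rho = (1+\sqrt{-3})/2$, and no other elliptic points; moreover $g(\Gamma) = 0$ and $t = 1$. For $\Delta \in S_{12}(\Gamma)$, Lemma~\ref{prelim-1}(iv) gives $\deg(\text{div}(\Delta)) = 12(0-1) + 6\big(1 + (1-1/2) + (1-1/3)\big) = -12 + 6(1 + \tfrac12 + \tfrac23) = -12 + 13 = 1$. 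Since $\Delta$ is a cusp form, $\nu_{\mathfrak a_\infty}(\Delta) \ge 1$ by Lemma~\ref{prelim-1}(ii), and since $\Delta$ is nowhere vanishing on $\mathbb{H}$ (the standard fact, visible from the product expansion $\Delta = q\prod(1-q^n)^{24}$), all other $\nu_{\mathfrak a}(\Delta) = 0$. Hence $\nu_{\mathfrak a_\infty}(\Delta) = 1$ and $\text{div}(\Delta) = \mathfrak a_\infty$.

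Next, for $E_4 \in M_4(\Gamma)$ I would again apply the degree formula: $\deg(\text{div}(E_4)) = 4(0-1) + 2\big(1 + \tfrac12 + \tfrac23\big) = -4 + 2\cdot\tfrac{13}{6} = -4 + \tfrac{13}{3} = \tfrac13$. Now $E_4$ is not a cusp form, so it need not vanish at $\infty$ — indeed its constant term is $1$, so $\nu_{\mathfrak a_\infty}(E_4) = 0$. By Lemma~\ref{prelim-1}(iii), at the order-$3$ elliptic point $\mathfrak a_\rho$ the quantity $\nu_{\mathfrak a_\rho}(E_4) - \tfrac{4}{2}(1 - 1/3) = \nu_{\mathfrak a_\rho}(E_4) - \tfrac43$ is an integer, while at the order-$2$ point $\mathfrak a_i$, $\nu_{\mathfrak a_i}(E_4) - 1$ is an integer, and at all non-elliptic non-cusp points $\nu_{\mathfrak a}(E_4)$ is a nonnegative integer. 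Since the total degree is only $\tfrac13$ and all summands are nonnegative, the only way to achieve this is $\nu_{\mathfrak a_\rho}(E_4) = \tfrac13$ (the minimal value consistent with the congruence $\nu \equiv \tfrac43 \pmod 1$ and $\nu \le \tfrac13$ being $\tfrac13$, since $\tfrac13 = \tfrac43 - 1$), with all other $\nu_{\mathfrak a}(E_4) = 0$. This forces $\text{div}(E_4) = \tfrac13 \mathfrak a_\rho$. Finally, $\text{div}(E_4^3) = 3\,\text{div}(E_4) = \mathfrak a_\rho$, which can either be seen directly or confirmed via the degree formula for weight $12$.

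The main obstacle — really the only nontrivial input — is the vanishing of $E_4$ precisely at $\rho$ and nowhere else in a fundamental domain (equivalently, that $E_4(i) \ne 0$, or that $j$ has a triple zero at $\rho$); but this is genuinely classical and the degree-formula argument above pins down the divisor once one knows $E_4$ vanishes somewhere at $\rho$ to positive order. Concretely I would note that the order-$3$ modular transformation $z \mapsto \rho z$-type argument, or the classical identity $j = E_4^3/\Delta$ with $j(\rho) = 0$ and $j$ having a zero of order $3$ at $\rho$ on the $j$-line, gives $\nu_{\mathfrak a_\rho}(E_4) > 0$ directly; the valence formula then leaves no room for anything else. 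Since the author flags this as "well-known," I expect the proof in the paper to simply cite the standard reference (e.g.\ Serre or \cite{Miyake}, 2.3 and the discussion of $SL_2(\mathbb{Z})$) rather than reprove it.
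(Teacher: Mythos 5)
Your proof is correct, and the skeleton (compute $\deg(\text{div})$ from Lemma \ref{prelim-1}(iv), then pin down the divisor) matches the paper's. The one place where you genuinely diverge is the key step for $E_4$: the paper establishes $\nu_{\mathfrak a_\rho}(E_4)>0$ by an explicit computation, taking the elliptic element $\gamma=\left(\begin{smallmatrix}1&-1\\ 1&0\end{smallmatrix}\right)$ fixing $\rho=(1+\sqrt{-3})/2$ and deducing $E_4(\rho)=j(\gamma,\rho)^4E_4(\rho)$ with $j(\gamma,\rho)^4=-\rho\neq 1$, hence $E_4(\rho)=0$; you instead invoke the congruence in Lemma \ref{prelim-1}(iii), which for $m=4$ and $e=3$ says $\nu_{\mathfrak a_\rho}(E_4)-\tfrac43\in\mathbb Z$, so together with nonnegativity $\nu_{\mathfrak a_\rho}(E_4)\ge\tfrac13$, and the valence formula forces equality. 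Your route is self-contained within the machinery of Section \ref{prelim} and needs no explicit evaluation; the paper's is more concrete and shows directly \emph{why} $E_4$ vanishes there. Note that your closing paragraph undersells your own argument: you say the vanishing of $E_4$ at $\rho$ is ``the only nontrivial input,'' but your congruence argument already derives it, so no external classical fact ($j=E_4^3/\Delta$, etc.) is needed. Also, for $\Delta$ you cite the product expansion to rule out zeros in $\mathbb H$, but this is superfluous: $\deg=1$, $\nu_{\mathfrak a_\infty}(\Delta)\ge 1$, and $\nu_{\mathfrak a}(\Delta)\ge 0$ everywhere already force $\text{div}(\Delta)=\mathfrak a_\infty$, which is all the paper uses.
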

\begin{proof} By Lemma \ref{prelim-1} (iv), we get $\text{deg}(\text{div}{(\Delta)})=1$. Since $\Delta$ is a cusp form, the
first formula follows. Again, by Lemma \ref{prelim-1} (iv), we get 
$\text{deg}(\text{div}{(E_4)})=1/3$. Set $\epsilon=(1+\sqrt{-3})/2$ and 
$\gamma=\left(\begin{matrix}1 & -1\\ 1& 0\end{matrix}\right)$. Then $\gamma.\epsilon=\epsilon$, and 
we have 
$$
E_4(\epsilon)=E_4(\gamma.\epsilon)=
j(\gamma, \epsilon)^{4} E_4(\epsilon)\implies E_4(\epsilon)=0,
$$
since $j(\gamma, \epsilon)^{4}=\epsilon^4=-\epsilon\neq 1$. Thus, by the preliminary considerations in Section 
\ref{prelim}, we obtain the second formula. The third formula is a direct consequence of the second.
\end{proof}

\vskip.2in 
In the remainder of this section we let  $\Gamma=\Gamma_0(N)$ and assume that $N\ge 2$. 
We warn the reader that we compute $\text{div}$ with respect to $\Gamma_0(N)$,  not with respect to 
$SL_2(\mathbb Z)$ as in the previous lemma.

Applying the arguments from the proof of (\cite{Miyake}, Theorem 4.2.7), we find that 
the representatives for $\Gamma_0(N)$--orbits of cusps for $\Gamma_0(N)$ are of the form 
$p/q$, $p, q\in \mathbb Z$ relatively prime, with exactly $\varphi((k, N/k))$ of them  satisfying $(q, N)=k$, for each 
$1\le k\le N$, $k| N$. When $k=N$, there is only one representative, and it belongs to the orbit 
$\Gamma_0(N).\infty$. We denote by $C_N$ the set of those representatives.
An elementary computation shows that  

\begin{align*}
SL_2(\mathbb Z)_{p/q}&=\left\{\left(\begin{matrix}\epsilon -pqt & p^2t\\ -q^2 t & \epsilon +pqt \end{matrix}\right): \ \
\epsilon=\pm 1, \ t\in \mathbb Z\right\}\\
\Gamma_0(N)_{p/q}&=\left\{\left(\begin{matrix}\epsilon -pqt & p^2t\\ -q^2 t & \epsilon +pqt \end{matrix}\right): \ \
\epsilon=\pm 1, \ t\in \mathbb Z, \ N| q^2t\right\}.
\end{align*}
Select $p', q'\in\mathbb Z$ such that $pp'+qq'=1$. Let $\sigma_{p/q}=
\left(\begin{matrix}p' & q'\\ -q & p \end{matrix}\right)\in SL_2(\mathbb R)$. Then, 
$\sigma_{p/q}.p/q=\infty$, and 
\begin{equation}\label{cccusppp}
\begin{aligned}
\sigma_{p/q}SL_2(\mathbb Z)_{p/q}\sigma^{-1}_{p/q} &=\left\{\pm 1\right\}\left\{
\left(\begin{matrix}1 & t\\ 0 & 1 \end{matrix}\right): \ \ t\in \mathbb Z\right\}\\
\sigma_{p/q}\Gamma_0(N)_{p/q}\sigma^{-1}_{p/q}&=\left\{\pm 1\right\}\left\{
\left(\begin{matrix}1 & t\\ 0 & 1 \end{matrix}\right): \ \ t\in \mathbb Z, \ N| q^2t\right\}, 
\end{aligned}
\end{equation}
where  the very last group is isomorphic to $\mathbb Z$ and generated by $N/(N, k^2)=
N/(k\cdot (k, N/k))$.

Next, we let 
$\Gamma'_0(N)$ to be the subgroup of $SL_2(\mathbb Z)$ consisting of all matrices 
$\left(\begin{matrix}* & a\\ * & * \end{matrix}\right)$ where $a$ is divisible by $N$.
If we let $\tau=\left(\begin{matrix}N & 0\\ 0 & 1 \end{matrix}\right)$, then 
$\Gamma'_0(N)=\tau \Gamma_0(N) \tau^{-1}$.
So, representatives of cusps for this group are $\tau.p/q=Np/q$. Obviously, we have 
$\Gamma'_0(N)_{Np/q}= \tau\Gamma_0(N)_{p/q} \tau^{-1}$. We let $\tau_{p/q}=\sigma_{p/q}\tau^{-1}$.
Using conjugation by $\left(\begin{matrix}0 & -1\\ 1 & 0 \end{matrix}\right)\in SL_2(\mathbb Z)$, the group 
$\Gamma'_0(N)$ is transformed onto $\Gamma'_0(N)$, and cusp $Np/q$ is transformed onto the cusp $-q/Np$ which is 
$\Gamma_0(N)$--equivalent to one of the form $p_1/q_1$, $p_1, q_1, \mathbb Z$ relatively prime, $(q_1, N)=N/k$, 
described above.

\begin{Lem}\label{appmod-2}
Assume that  $\Gamma=\Gamma_0(N)$ and $N\ge 2$. Set $\epsilon=(1+\sqrt{-3})/2$. Then, 
by considering $\Delta, E^3_4\in M_{12}(\Gamma_0(N))$,   we have the following:
\begin{align*}
\text{div}{(\Delta)}&=\sum_{p/q\in C_N} \frac{N}{k} \frac{1}{(k, N/k)} \mathfrak a_{p/q}\\
\text{div}{(\Delta (N\cdot ))}&=\sum_{p/q\in C_N} k \frac{1}{(k, N/k)} \mathfrak a_{p/q}\\
\text{div}{(E^3_4)} &= \sum_{\gamma\in \Gamma_0(N) \backslash SL_2(\mathbb Z)/SL_2(\mathbb Z)_\epsilon}
m_\gamma \mathfrak a_{\gamma.\epsilon}\\
\text{div}{(E^3_4(N\cdot ))} &= \sum_{\gamma\in \Gamma_0(N) \backslash \{\frac1N \gamma.\epsilon; \  \gamma\in SL_2(\mathbb Z)\}} 
m_\gamma \mathfrak a_{\gamma.\epsilon},
\end{align*}
where $m_\gamma\ge 1$ in both cases. 
\end{Lem}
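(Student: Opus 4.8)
The plan is to compute the four divisors one at a time, splitting into the two cusp forms $\Delta, \Delta(N\cdot)$ on the one hand and the two Eisenstein cubes $E_4^3, E_4^3(N\cdot)$ on the other. For the cuspidal ones the key input is Lemma \ref{prelim-1}(iv), which gives the total degree of $\mathrm{div}(\Delta)$ as a modular form of weight $12$ for $\Gamma_0(N)$, together with the fact that $\Delta$ has no zeros on $\mathbb H$ and no zeros at cusps other than where its $q$-expansion (in the appropriate local parameter) forces one. Concretely, since $\Delta$ is nonvanishing on $\mathbb H$, $\mathrm{div}(\Delta)$ is supported on the cusps $C_N$; the order of vanishing at a cusp $p/q$ with $(q,N)=k$ is read off from (\ref{cccusppp}): the width of that cusp for $\Gamma_0(N)$ is $N/(k(k,N/k))$, and since $\Delta$ as a form on $SL_2(\mathbb Z)$ vanishes to order exactly $1$ at $\infty$ in the variable $e^{2\pi i z}$, the order $\nu_{p/q}(\Delta)$ in the $\Gamma_0(N)$-local parameter is that width, i.e.\ $\frac{N}{k}\cdot\frac{1}{(k,N/k)}$. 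Summing over the $\varphi((k,N/k))$ representatives with $(q,N)=k$, over all $k\mid N$, recovers the claimed formula (and one should check the total matches Lemma \ref{prelim-1}(iv) as a sanity check, using $\psi(N)=\sum_{k\mid N}\varphi((k,N/k))\frac{k}{(k,N/k)}$ type identities).

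For $\Delta(N\cdot)$ I would use the preliminary observation at the end of Section \ref{prelim} about $f\mapsto f|_k\alpha$ and the induced isomorphism of Riemann surfaces, applied with $\alpha=\tau=\mathrm{diag}(N,1)$: up to scalar, $\Delta(N\cdot)=\Delta|_{12}\tau$, so $\mathrm{div}(\Delta(N\cdot))$ on $\mathfrak R_{\Gamma_0(N)}$ is the pushforward, via the identification $\mathfrak R_{\Gamma'_0(N)}\cong\mathfrak R_{\Gamma_0(N)}$, of the divisor of $\Delta$ viewed on $\Gamma'_0(N)=\tau\Gamma_0(N)\tau^{-1}$. Tracking the cusp $p/q$ through $\tau_{p/q}=\sigma_{p/q}\tau^{-1}$ as set up just before the lemma, the relevant width becomes $k/(k,N/k)$ instead of $(N/k)/(k,N/k)$ — the roles of $k$ and $N/k$ swap — which gives the second formula. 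The symmetry here is exactly the Fricke/Atkin–Lehner flip, and the bookkeeping in (\ref{cccusppp}) and the paragraph on $\Gamma'_0(N)$ is designed to make it mechanical.

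For the Eisenstein cubes the situation is genuinely different: $E_4^3$ does vanish on $\mathbb H$, namely at the $SL_2(\mathbb Z)$-orbit of $\epsilon=(1+\sqrt{-3})/2$ (Lemma \ref{appmod-1}), and nowhere at cusps since $E_4$ has nonzero constant term in every cusp expansion. Restricting from $SL_2(\mathbb Z)$ to $\Gamma_0(N)$ breaks that single orbit into the double-coset set $\Gamma_0(N)\backslash SL_2(\mathbb Z)/SL_2(\mathbb Z)_\epsilon$, and at each resulting point $\gamma.\epsilon$ the order $m_\gamma$ of vanishing (in the $\Gamma_0(N)$-local parameter, i.e.\ after dividing by the $\Gamma_0(N)$-elliptic order $e_{\gamma.\epsilon}$) is a positive integer — positive because $E_4^3$ does vanish there, integral because of Lemma \ref{prelim-1}(iii) for weight $12$ (which makes $\nu_{\mathfrak a}(E_4^3)-6(1-1/e_{\mathfrak a})$ an integer, and $6(1-1/e_{\mathfrak a})$ is already an integer when $e_{\mathfrak a}\in\{1,3\}$). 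The fourth formula is then the same statement for $E_4^3|_{12}\tau$, with the orbit $\{\frac1N\gamma.\epsilon\}$ replacing $\{\gamma.\epsilon\}$ exactly as $\Delta(N\cdot)$ replaced $\Delta$. The main obstacle — though it is more a matter of careful index-juggling than of a hard idea — is verifying that the widths/ramification indices come out to precisely $N/(k(k,N/k))$ and $k/(k,N/k)$ from the explicit stabilizer computations in (\ref{cccusppp}), and confirming the $k\leftrightarrow N/k$ exchange under conjugation by $\tau$ and by $\left(\begin{smallmatrix}0&-1\\1&0\end{smallmatrix}\right)$; the rest follows from Lemma \ref{prelim-1} and the Section \ref{prelim} remarks on $f|_k\alpha$.
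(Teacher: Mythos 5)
Your proposal is correct and follows essentially the same route as the paper: compute $\mathrm{div}(\Delta)$ from the cusp widths $N/(k(k,N/k))$ extracted from (\ref{cccusppp}) together with the order-one vanishing of $\Delta$ at $\infty$, transfer to $\Delta(N\cdot)$ via the slash action by $\tau=\mathrm{diag}(N,1)$ and the isomorphism $\mathfrak R_{\Gamma_0'(N)}\cong\mathfrak R_{\Gamma_0(N)}$ from the end of Section \ref{prelim}, and for $E_4^3$ use that its only zeros lie on the $SL_2(\mathbb Z)$-orbit of $\epsilon$, where the elliptic order divides $3$ so the local multiplicities are positive integers. The only cosmetic difference is that you invoke Lemma \ref{prelim-1}(iii) directly where the paper cites part (vi) to conclude $\mathfrak c'_{E_4^3}=\mathrm{div}(E_4^3)$; the content is identical.
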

\begin{proof} We use the construction of the divisor of a modular form explained in Section \ref{prelim}.
To find Fourier expansion at $p/q$ for $\Delta$ considered
as a cusp form on $\Gamma_0(N)$, we use Lemma \ref{appmod-1} and  (\ref{cccusppp}). 
 The first expression in  (\ref{cccusppp})  and Lemma \ref{appmod-1} tell us that the Fourier 
 expansion at $p/q$ for $\Delta$ considered as a cusp form on $SL_2(\mathbb Z)$ has the first Fourier 
coefficient non--zero. We use the second expression in (\ref{cccusppp}) to scale formula. 
Similalrly, we first determine the divisor of $\Delta$ considered
as a cusp form on $\Gamma'_0(N)$ using the discussion in the paragraph before the statement of the lemma.
Finally, we get the divisor of  $\Delta (N\cdot )$ by using the remark from the end of Section \ref{prelim}

For the third and fourth formula, we observe that  
 Lemma \ref{appmod-1} implies that $SL_2(\mathbb Z)$--orbit of  $\epsilon$ are only zeroes of $E_4$. 
Some of them are elliptic for $\Gamma_0(N)$ and some are not. But, since elliptic among them are of order $3$ and $m=12$,
 we see that by Lemma \ref{prelim-1} (vi) we have $\mathfrak c'_{E^3_4} =\text{div}{(E^3_4)}$. 
This immediately  implies the third formula. For the fourth, only zeroes
of $E^3_4(N\cdot )$ are in the set $\{\frac1N \gamma.\epsilon; \  \gamma\in SL_2(\mathbb Z)\}$, and the claim follows.
The last claim is obvious from Lemma \ref{appmod-1}.
\end{proof}

\vskip .2in

Having Lemma \ref{appmod-2}, it is easy to complete the proof of Theorem \ref{ithm}. 
We consider the regular  map given by (\ref{mmap}). As we indicated in the introduction
this map is birational. Thus, by  Theorem \ref{p2-120}, we 
have the following formula for the degree of the corresponding curve which is the same 
as the degree of $N$--th modular polynomial $\Phi_N$
$$
\deg{(\Phi_N)}= \dim S_{24}(\Gamma_0) + g(X_0(N)) -1 - \sum_{\mathfrak a\in X_0(N)} 
\min{\left(\mathfrak c_{\Delta(N\cdot )\Delta}(\mathfrak a),  \mathfrak c_{E^3_4\Delta(N\cdot )}(\mathfrak a), 
\mathfrak c_{E^3_4(N\cdot ) \Delta}(\mathfrak a) \right)}.
$$

Now, let us write $\nu_2(\Gamma_0(N))$, $\nu_3(\Gamma_0(N))$, and $\nu_\infty(\Gamma_0(N))$  for the number of inequivalent 
elliptic points of order $2$, inequivalent elliptic points of order $3$, and inequivalent cusps for 
$\Gamma_0(N)$, respectively.

Next, Lemma \ref{appmod-2}, implies that 
\begin{align*}
&\sum_{\mathfrak a\in X_0(N)}
\min{\left(\mathfrak c_{\Delta(N\cdot )\Delta}(\mathfrak a),  \mathfrak c_{E^3_4\Delta(N\cdot )}(\mathfrak a), 
\mathfrak c_{E^3_4(N\cdot ) \Delta}(\mathfrak a) \right)}= \\
&=\sum_{\substack{k|N \\0<k\le N}}\varphi((k, N/k))\min{\left(k,  \frac{N}{k}\right)} 
\frac{1}{(k, N/k)}- \nu_\infty(\Gamma_0(N))\\
&=\varphi(\sqrt{n})+ 2\sum_{\substack{k|N \\\sqrt{N}<k\le N}}\varphi((k, N/k))
\frac{N/k}{(k, N/k)}- \nu_\infty(\Gamma_0(N)),
\end{align*}
where we use the convention from the introduction that $\varphi(\sqrt{n})=0$ if $n$ is not a perfect square.

Now,  inserting explicit formulas for $\dim S_{24}(\Gamma_0(N))$ 
(see Lemma \ref{prelim-1}(v)), using (see \cite{Miyake}, Theorem 4.2.11) 
\begin{equation}\label{gggenus}
g(\Gamma_0(N))=1+ \frac{1}{12}[SL_2(\mathbb Z): \Gamma_0(N)]- \frac{\nu_2(\Gamma_0(N))}{4}-
 \frac{\nu_3(\Gamma_0(N))}{3}- \frac{\nu_\infty(\Gamma_0(N))}{2},
\end{equation}
and recalling that elliptic points of $\Gamma_0(N)$ are of order $2$ or $3$, we obtain 
$$
\deg{(\Phi_N)}= 2[SL_2(\mathbb Z): \Gamma_0(N)] -\varphi(\sqrt{n})- 2\sum_{\substack{k|N \\\sqrt{N}<k\le N}}\varphi((k, N/k))
\frac{N/k}{(k, N/k)}.
$$

This formula can be further simplified if we compute the degree of the divisor of $\Delta$ in two ways: first using
 its definition (see Section \ref{prelim}; using the discussion before Lemma \ref{appmod-2}), and using 
Lemma \ref{prelim-1} (iv) which gives us by means of (\ref{gggenus})
 
\begin{align*}
\text{deg}(\text{div}{(\Delta)})&=12(g(\Gamma_0(N))-1)+ 6\nu_\infty(\Gamma_0(N)) + 3\nu_2(\Gamma_0(N)) + 4\nu_3(\Gamma_0(N))
\\
&= [SL_2(\mathbb Z): \Gamma_0(N)].
\end{align*}
As a result we obtain the following identity\footnote{In passing, we remark that (\cite{Miyake}, Theorems 4.2.5) 
implies $[SL_2(\mathbb Z): \Gamma_0(N)]=\psi(N)$ (see the 
Introduction).}
$$\sum_{\substack{k|N \\0<k\le N}} \frac{N}{k}  \frac{1}{(k, N/k)} \varphi((k, N/k))=[SL_2(\mathbb Z): \Gamma_0(N)].
$$
 Now,  Theorem \ref{ithm} follows easily.

\section{Existence of Integral models}\label{exist}

The goal of this section is to prove the following corollary to Theorem \ref{p2-thm} (stated as Theorem \ref{ithm-3}
in the Introduction):

\begin{Cor}\label{exist-1}
Assume that $N\not\in \{1, 2, 3, 4, 5, 6, 7, 8, 9, 10, 12, 13, 16, 18, 25\}$ (so that the genus 
$g(\Gamma_0(N))\ge 1$). Assume that  $m\ge 4$ (if $N\neq 11$) and $m\ge 6$ (if $N=11$) is an even integer. 
Let $f, g \in S_m(\Gamma_0(N))$  be linearly independent with integral $q$--expansions.
Then, there exists infinitely many   $h\in S_m(\Gamma_0(N))$   
with integral $q$--expansion such that  we have the following:
\begin{itemize}
\item[(i)] $X_0(N)$ is birationally equivalent to  $\cal C(f, g, h)$, 
\item[(ii)] $\cal C(f, g, h)$ has degree equal to $\dim S_m(\Gamma_0(N))+g(\Gamma_0(N))-1$ 
(this number can be easily explicitly computed using Lemma \ref{prelim-1}(v) and (\ref{gggenus})); 
if $N=11$, then the minimal possible degree achieved (for $m=6$) is $4$, and if $N\neq 11$, then the minimal 
possible degree achieved (for $m=4$) is
$$
\frac13 \psi(N)-  \frac13 \nu_3 -\sum_{d>0, d|N} \phi((d, N/d)),
$$
where $\nu_3$ is the number of elliptic elements of order three on $X_0(N)$,
$\nu_3=0$ if $9|N$, and $\nu_3=\prod_{p|N}\left(1+ \left(\frac{-3}{p}\right)\right)$ otherwise.

\item[(iii)] the equation of $\cal C(f, g, h)$ has integral coefficients.
\end{itemize}
\end{Cor}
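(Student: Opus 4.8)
The plan is to deduce Corollary \ref{exist-1} from Theorem \ref{p2-thm} (equivalently Theorem \ref{ithm-3}), feeding in two ingredients: (a) that $W = S_m(\Gamma_0(N))$ satisfies the hypotheses of that theorem for the stated range of $N$ and $m$, and (b) that the Zariski-open set $\mathcal U \subset W$ produced by the theorem meets the $\mathbb Z$-lattice of cusp forms with integral $q$--expansion in infinitely many points. First I would verify (a). By Theorem \ref{p2-thm} it suffices to check $\dim S_m(\Gamma_0(N)) \ge \max(g(\Gamma_0(N))+2, 3)$. Using Lemma \ref{prelim-1}(v) together with the genus formula (\ref{gggenus}), one has $\dim S_m(\Gamma_0(N)) = (m-1)(g-1) + (\tfrac m2 - 1)\nu_\infty + \sum_{\text{ell.}}[\tfrac m2(1-1/e_{\mathfrak a})]$; for $m \ge 4$ (and for $m \ge 6$ when $N=11$, where $g=1$ and $\nu_\infty = 2$ force a slightly larger $m$) an elementary estimate shows this exceeds $g+2$ precisely once $g(\Gamma_0(N)) \ge 1$, i.e.\ once $N$ is outside the listed finite set of genus-zero levels. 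The excluded small cases $\{14,\dots\}$ with $g\ge 1$ but $m=4$ too small are exactly what the $N=11$ caveat handles; I would check the borderline levels by hand.

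Next I would address (b), which is the crux. Theorem \ref{p2-thm} gives a nonempty Zariski-open $\mathcal U \subset W = S_m(\Gamma_0(N))$ (viewed as an affine space over $\mathbb C$) of "good" forms $h$. The space $S_m(\Gamma_0(N))$ carries a natural $\mathbb Z$-structure: the $\mathbb Z$-module $S_m(\Gamma_0(N))_{\mathbb Z}$ of forms with integral $q$--expansion at $\infty$ is a full-rank lattice, by the $q$--expansion principle / results on integral structures (this is where the acknowledgement to Kazalicki and Savin is relevant). Fix a $\mathbb Z$-basis of this lattice; then $\mathcal U$ pulled back is the complement of a proper Zariski-closed subset of $\mathbb A^{s}_{\mathbb C}$, hence defined by the non-vanishing of a nonzero polynomial $R$. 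I would invoke the elementary fact that a nonzero polynomial over an infinite domain does not vanish on a full lattice of translates — more precisely, for any nonzero $R \in \mathbb C[x_1,\dots,x_s]$ there are infinitely many $(\lambda_1,\dots,\lambda_s) \in \mathbb Z^s$ (indeed outside a finite union of hyperplanes) with $R(\lambda) \ne 0$. (One can also keep $f, g$ fixed and only vary the remaining coordinates, as in the proof of Lemma \ref{p2-8}.) Each such integral point yields an $h \in S_m(\Gamma_0(N))_{\mathbb Z}$ lying in $\mathcal U$, giving infinitely many admissible $h$; parts (i) and (ii) for such $h$ are then immediate from Theorem \ref{p2-thm}(i),(ii) and (\ref{2div}).

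For the explicit minimal degree in (ii): by Theorem \ref{p2-thm}(ii) the degree equals $\dim S_m(\Gamma_0(N)) + g(\Gamma_0(N)) - 1$, which is nondecreasing in the even weight $m$, so the minimum over admissible $m$ is attained at $m=4$ (or $m=6$ if $N=11$). For $m=4$ I would plug $m=4$ into Lemma \ref{prelim-1}(v), getting $\dim S_4 + g - 1 = 4(g-1) + \nu_\infty + \sum_{\text{ell.\ of order }3}[\tfrac43(1-\tfrac13)] = 4(g-1) + \nu_\infty + (\nu_3 - \lceil \nu_3 \text{ corrections}\rceil)$ — more carefully, $[\tfrac43 \cdot \tfrac23] = [\tfrac89] = 0$ so the order-3 elliptic points contribute $0$ each while order-2 points contribute $[\tfrac43 \cdot \tfrac12] = 0$ as well, leaving $\dim S_4 + g - 1 = 4(g-1)+\nu_\infty$. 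Then substituting the genus formula (\ref{gggenus}) with $[SL_2(\mathbb Z):\Gamma_0(N)] = \psi(N)$, the $\nu_2$ and $\nu_\infty$ terms telescope and one is left with $\tfrac13\psi(N) - \tfrac13\nu_3$; the correction $-\sum_{d\mid N}\varphi((d,N/d)) = -\nu_\infty(\Gamma_0(N))$ coming from the shift $\mathfrak c_f = \mathfrak c'_f - \sum_{\text{cusp}}\mathfrak b$ in (\ref{2div}) accounts for the last summand. For $N=11$, $g=1$, $m=6$: $\dim S_6(\Gamma_0(11)) = 5(g-1) + 2\nu_\infty + \cdots$; a direct computation gives degree $4$. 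Finally, (iii): the curve $\mathcal C(f,g,h)$ is the zero locus of the irreducible homogeneous polynomial $P \in \mathbb C[x_0,x_1,x_2]$ vanishing on the image; since $f, g, h$ all have integral $q$--expansions, the algebraic relation among $f/h, g/h$ can be taken with coefficients in $\mathbb Q$ (comparing $q$--expansions gives a linear system over $\mathbb Q$), hence after clearing denominators with integral coefficients, and by Gauss's lemma the irreducible $P$ can be normalized to lie in $\mathbb Z[x_0,x_1,x_2]$ with content $1$.

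I expect the main obstacle to be (b), specifically pinning down the integral structure: one must be sure that $S_m(\Gamma_0(N))_{\mathbb Z}$ is a full lattice in $S_m(\Gamma_0(N))$ (so that integral points are Zariski-dense) and that "integral $q$--expansion at $\infty$" is the right notion making (iii) work simultaneously at all cusps — this is exactly the point on which the paper thanks Kazalicki and Savin, and I would cite the relevant integrality statement rather than reprove it. Everything else is bookkeeping with Lemma \ref{prelim-1} and the genus formula.
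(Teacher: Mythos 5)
Your overall route is the same as the paper's: apply Theorem \ref{p2-thm} with $W=S_m(\Gamma_0(N))$, check the hypothesis $\dim S_m(\Gamma_0(N))\ge\max{(g(\Gamma_0(N))+2,3)}$ via Lemma \ref{prelim-1}(v) and (\ref{gggenus}), use the integral basis of $S_m$ (the paper invokes Eichler--Shimura) together with the Zariski density of a cofinite subset of $\mathbb Z^{s}$ in $\mathbb C^{s}$ to find infinitely many integral $h$ in the open set $\mathcal U$, and obtain (iii) from the fact that the coefficients of the defining polynomial solve a homogeneous linear system with integral coefficients determined by the $q$--expansions. All of that matches the paper's proof.

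There is, however, a concrete arithmetic error in your derivation of the minimal degree in (ii), and it propagates. In Lemma \ref{prelim-1}(v) the elliptic contribution for $m=4$ is $\left[\frac{m}{2}\left(1-\frac{1}{e_{\mathfrak a}}\right)\right]$, i.e.\ $\left[\frac{4}{3}\right]=1$ for each order-$3$ point and $\left[1\right]=1$ for each order-$2$ point --- not $0$ as you assert (you computed $\frac43\cdot\frac23$ where the formula calls for $\frac42\cdot\frac23$). Hence $\dim S_4(\Gamma_0(N))+g-1=4(g-1)+\nu_\infty+\nu_2+\nu_3$, and substituting (\ref{gggenus}) yields $\frac13\psi(N)-\frac13\nu_3-\nu_\infty$ directly; there is no extra ``correction from (\ref{2div})'' to add, since Theorem \ref{p2-thm}(ii) already gives the degree as $\dim S_m+g-1$ when $W\subset S_m(\Gamma)$ (the cusp shift is built into the definition of $\mathfrak c_f$ and hence into that statement). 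With your value $4(g-1)+\nu_\infty$ the substitution would instead produce $\frac13\psi(N)-\nu_2-\frac43\nu_3-\nu_\infty$, which is not the claimed formula. The same slip undermines the hypothesis check you left as ``an elementary estimate'': the paper's case analysis relies on the $+\nu_2+\nu_3$ term precisely to handle prime $N$ with $g=1$ (for example $N=17$, where $\dim S_4=0+2+2+0=4\ge g+2=3$ only because $\nu_2=2$; with your count one gets $2<3$ and the argument fails), and it is exactly the residual case $N$ prime, $\nu_2=\nu_3=0$, $g=1$ that isolates $N=11$ and forces $m\ge 6$ there. So the skeleton of your argument is correct and agrees with the paper, but the bookkeeping in (ii) and the borderline verification of the dimension hypothesis need to be redone as in the paper's proof.
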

\begin{proof} First, by Eichler--Shimura theory, for 
each even integer $m\ge 2$ the space of cusp forms $S_m(\Gamma)$ has a basis as a complex vector space 
consisting of forms which have integral $q$--expansions. So, if we 
$f$ and $g$ with  integral coefficients in their $q$--expansions, then we can select infinitely many $h$ which also
 have integral coefficients in  their $q$--expansions. This is because $\mathbb Z^l$ and a complement of finite subset of 
it are Zariski dense in $\mathbb C^l$, for any $l\ge 1$. As a consequence, since the polynomial 
equation of $\cal C(f, g, h)$, after inserting the $q$--expansions for $f$, $g$, and $h$,  
produces a homogeneous system with integral coefficients which has the coefficients of the polynomial as a unique 
solution up to a scalar, the coefficients can be taken to be integral as well. At the end, to apply Theorem \ref{p2-thm},
so that above discussion is valid, we need to assure that $\dim S_m(\Gamma_0(N))\ge \max{(g(\Gamma_0(N))+2, 3)}$.
Since we assume that $g(\Gamma_0(N))\ge 1$, we only require that $\dim S_m(\Gamma_0(N))\ge g(\Gamma_0(N))+2$.
Using  Lemma \ref{prelim-1}(v),  and the notation introduced at the end of Section \ref{appmod},  we obtain

\begin{multline*}
\dim S_m(\Gamma_0(N)) =
(m-1)(g(\Gamma_0(N))-1)+\left(\frac{m}{2}-1\right)\nu_\infty(\Gamma_0(N)) +\\
+ \left[\frac{m}{4}\right]\nu_2(\Gamma_0(N))
+\left[\frac{m}{3}\right]\nu_3(\Gamma_0(N)).
\end{multline*}

By (\cite{Miyake}, Theorem 4.2.7), we have 
$$\nu_\infty(\Gamma_0(N))=\sum_{d>0, d|N} \phi((d, N/d))\ge 3
$$
unless $N$ is prime number in which case $\nu_\infty(\Gamma_0(N))=2$. Next, unless 
$\nu_2(\Gamma_0(N))=\nu_3(\Gamma_0(N))=0$, above formula shows that  for $m=4$ we have 
$$
\dim S_4(\Gamma_0(N))\ge 3(g(\Gamma_0(N))-1)+ 2\left(\frac{4}{2}-1\right)+ 1=3 g(\Gamma_0(N)
\ge g(\Gamma_0(N))+2, 
$$
since we assume that $g(\Gamma_0(N)\ge 1$. Similarly we have if $\nu_2(\Gamma_0(N))=\nu_3(\Gamma_0(N))=0$ but 
$N$ is not prime. It remains to consider the case $N$ is prime and  $\nu_2(\Gamma_0(N))=\nu_3(\Gamma_0(N))=0$. 
In this case 
$$
\dim S_4(\Gamma_0(N))= 3g(\Gamma_0(N))-1\ge g(\Gamma_0(N))+2 
$$ 
if and only if $g(\Gamma_0(N))\ge 2$. It remains to consider the case 
 $N$ is prime, $\nu_2(\Gamma_0(N))=\nu_3(\Gamma_0(N))=0$, and $g(\Gamma_0(N))=1$. In this case (\ref{gggenus}) 
gives us $[SL_2(\mathbb Z): \Gamma_0(N)]=12$. Applying (\cite{Miyake}, Theorem 4.2.5), we see that 
$\psi(N)=N+1=12$ since $N$ is prime. Hence, $N=11$. In this case, we use (\cite{Miyake}, Theorem 4.2.5) to check that we 
indeed have $\nu_2(\Gamma_0(11))=\nu_3(\Gamma_0(11))=0$ and $g(\Gamma_0(N))=1$. This gives us 
$\dim S_4(\Gamma_0(11)) = 2$ and $\dim S_6(\Gamma_0(11)) =4$. In this case is 
$\dim S_6(\Gamma_0(11)) + g(\Gamma_0(11))-1=4$. 

Apart from this case, we can use $m=4$, which gives us the formula

\begin{align*}
\dim S_4(\Gamma_0(N)) + g(\Gamma_0(N))-1&=
\frac13 \psi(N)-  \frac13 \nu_3(\Gamma_0(N))- \nu_\infty(\Gamma_0(N)).
\end{align*}
Now, we apply  (\cite{Miyake}, Theorem 4.2.5) to complete the proof of the corollary.
\end{proof}

\section{An Improvement of Theorem \ref{p2-thm}} \label{exam}

The goal of this section is to prove a corollary to Theorem \ref{p2-thm} (stated as Theorem \ref{ithm-3}
in the Introduction) and its improvement. We prove necessary lemmas first. 

\begin{Lem}\label{exam-1} Let $N\ge 2$. Then, 
$\Delta, E^3_4, \Delta(N\cdot ),$ and $E^3_4(N\cdot )$ are linearly independent.
\end{Lem}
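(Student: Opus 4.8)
The plan is to show that the only linear relation $a\Delta + b E_4^3 + c\,\Delta(N\cdot) + d\,E_4^3(N\cdot) = 0$ in $M_{12}(\Gamma_0(N))$ is the trivial one, by exploiting the very different divisors computed (or computable) via Lemma \ref{appmod-1} and the discussion preceding Lemma \ref{appmod-2}. First I would dispose of the ``same scaling'' pairs: $\Delta$ and $E_4^3$ are linearly independent already as elements of $M_{12}(SL_2(\mathbb Z))$ (which is one-dimensional over $\Delta$ only after we know $E_4^3 = \Delta\cdot j$ is not a scalar multiple of $\Delta$, i.e.\ $j$ is nonconstant), hence a fortiori in $M_{12}(\Gamma_0(N))$; similarly $\Delta(N\cdot)$ and $E_4^3(N\cdot)$ are linearly independent, being the images of $\Delta,E_4^3$ under the injective scaling operator $f\mapsto f|_{12}\left(\begin{smallmatrix}N&0\\0&1\end{smallmatrix}\right)$ (up to normalization), which is an isomorphism of $M_{12}(SL_2(\mathbb Z))$ onto $M_{12}(\Gamma'_0(N))$ as recalled at the end of Section \ref{prelim}.

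The heart of the argument is to separate the ``level $1$'' pair from the ``level $N$'' pair. I would use the $q$-expansion at the cusp $\infty$. Write $q = e^{2\pi i z}$. Then $\Delta = q + O(q^2)$ and $E_4^3 = 1 + 720q + O(q^2)$ have nonzero constant-or-linear leading behavior with $v_\infty(\Delta)=1$, $v_\infty(E_4^3)=0$; whereas $\Delta(Nz) = q^N + O(q^{2N})$ and $E_4^3(Nz) = 1 + 720 q^N + O(q^{2N})$ only involve powers of $q^N$. Suppose $a\Delta + bE_4^3 + c\,\Delta(N\cdot) + d\,E_4^3(N\cdot) = 0$. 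Looking at the constant term gives $b + d = 0$. Looking at the coefficient of $q$ (recall $N\ge 2$, so $q^N$ does not contribute at order $1$) gives $a + 720 b = 0$. Then $a\Delta + bE_4^3 = a\Delta + bE_4^3$ agrees, as a $q$-series, with $-c\,\Delta(N\cdot) + b\,E_4^3(N\cdot) = b\big(E_4^3(N\cdot) - E_4^3\big) - (a\Delta + \cdots)$; cleaner: rearrange to $a\Delta + bE_4^3 = -c\,\Delta(Nz) - d\,E_4^3(Nz)$, whose right-hand side is a power series in $q^N$. But $a\Delta + bE_4^3 = j\cdot(a\Delta/E_4^3 \cdot E_4^3 + \ldots)$—better to argue directly: $a\Delta + bE_4^3$, if nonzero, is a nonzero element of the one-dimensional-modulo-lower-weight space $M_{12}(SL_2(\mathbb Z))$, and $M_{12}(SL_2(\mathbb Z))$ has a basis $\{\Delta, E_4^3\}$ (or $\{E_{12}, \Delta\}$); such a form cannot be a series in $q^N$ for $N \geq 2$ unless it is zero, because $E_4^3 = 1 + 720q + \cdots$ forces a nonzero $q^1$ coefficient whenever $b\neq 0$, and $a\Delta$ alone has nonzero $q^1$ coefficient whenever $a\neq 0$. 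Hence $a\Delta+bE_4^3=0$, so $a=b=0$ by the level-one independence, and then $c\,\Delta(N\cdot) + d\,E_4^3(N\cdot)=0$ forces $c=d=0$ by the level-$N$ independence. This completes the proof.

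An equivalent route, which I would mention as a remark, avoids $q$-expansions and uses divisors: by Lemma \ref{appmod-2}, $\mathrm{div}(\Delta)$ and $\mathrm{div}(\Delta(N\cdot))$ are supported on cusps with different coefficients (the coefficient at $\mathfrak a_{p/q}$ is $\tfrac{N/k}{(k,N/k)}$ versus $\tfrac{k}{(k,N/k)}$, and these differ for at least one $k\mid N$ once $N\ge 2$), while $\mathrm{div}(E_4^3)$ and $\mathrm{div}(E_4^3(N\cdot))$ are supported away from the cusps entirely; comparing orders at a cusp where the relation would force cancellation yields the claim. The $q$-expansion argument is cleaner, so I would make it primary.

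The main obstacle, such as it is, is purely bookkeeping: one must be careful that for $N\ge 2$ the shifted forms $\Delta(N\cdot), E_4^3(N\cdot)$ genuinely ``start later'' so that the $q^0$ and $q^1$ coefficients of the relation are controlled only by the unshifted pair — this uses $N\ge 2$ in an essential way (for $N=1$ the four forms are of course not independent). Once that separation is in hand, everything reduces to the two elementary independence facts at level $1$ and level $N$, which follow from $\dim M_{12}(SL_2(\mathbb Z)) = 2$ together with the injectivity of the scaling isomorphism recorded at the end of Section \ref{prelim}.
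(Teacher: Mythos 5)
Your overall strategy (compare low-order $q$-coefficients at $\infty$, using $N\ge 2$ so that $\Delta(N\cdot)$ and $E_4^3(N\cdot)$ contribute nothing between $q^1$ and $q^{N-1}$) is exactly the paper's, and the first two steps agree with it: the constant term gives $b+d=0$ and the coefficient of $q$ gives $a+720b=0$. The gap is in the step that is supposed to force $a=b=0$. You rearrange to $a\Delta+bE_4^3=-c\,\Delta(N\cdot)-d\,E_4^3(N\cdot)$, note the right side is a power series in $q^N$, and conclude the left side must vanish ``because $E_4^3$ forces a nonzero $q^1$ coefficient whenever $b\neq 0$, and $a\Delta$ alone has nonzero $q^1$ coefficient whenever $a\neq 0$.'' That reasoning does not survive taking the linear combination: the $q^1$ coefficient of $a\Delta+bE_4^3$ is $a+720b$, which vanishes on the whole line $a=-720b$, not only at the origin. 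Concretely, $E_4^3-720\Delta=1+190080\,q^2+\cdots$ is a nonzero form with zero $q^1$ coefficient. Worse, you have \emph{already derived} $a+720b=0$ from the relation, so invoking the $q^1$ coefficient again is circular and yields no new information; as written, the argument never rules out $(a,b)=(-720t,t)$ with $t\neq 0$.

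The repair is precisely what the paper does: extract one more linear equation in $(a,b)$ from a coefficient that the level-$N$ forms still cannot reach. For $N>2$ the coefficient of $q^2$ gives $-24a+172800b=0$, and for $N=2$ the coefficient of $q^3$ gives $252a+13824000b=0$ (here $\Delta(2\cdot)$ and $E_4^3(2\cdot)$ contribute only in even powers of $q$, so they drop out of the $q^3$ coefficient). Either equation together with $a+720b=0$ has only the trivial solution, whence $a=b=0$, then $d=-b=0$ and $c\,\Delta(N\cdot)=0$ gives $c=0$. (The underlying claim you wanted --- that a nonzero element of $M_{12}(SL_2(\mathbb Z))$ cannot have a $q$-expansion supported on powers of $q^N$ for $N\ge 2$ --- is true, but it needs an argument of at least this strength; the $q^1$ coefficient alone does not prove it.) Your concluding divisor-based remark is a reasonable alternative sketch but is not developed far enough to substitute for the missing step.
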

\begin{proof} We write their $q$--expansions
\begin{align*}
\Delta&=q-24q^2+252q^3+\cdots\\
E^3_4 &=1+720q+172800 q^2+ 13824000q^3+\cdots\\
\Delta(N\cdot )&=q^N-24q^{2N}+252q^{3N}+\cdots\\
E^3_4(N\cdot )&=1+720q^N +172800 q^{2N}+ 13824000q^{3N}+\cdots.
\end{align*}
Now, assume that  for $\alpha, \beta, \gamma, \delta\in \mathbb C$ we have 
$$
\alpha\Delta+ \beta E^3_4 +\gamma \Delta(N\cdot )+ \delta E^3_4(N\cdot )=0.
$$
Inserting $q$--expansions in that expression, the coefficients of $q^0$ and $q$ must be equal to zero i.e., 
$\beta+\delta=0$ and $\alpha +720\beta=0$. If $N>2$, then $-24\alpha+172800 \beta=0$ is a coefficient of $q^2$.
 If $N=2$, then $252\alpha+ 13824000\beta=0$ is a coefficient of $q^3$. In either case, 
 we get immediately $\alpha=\beta=\gamma=\delta=0$.
\end{proof}

\begin{Lem}\label{exam-2} Let $N\ge 2$. Then, by considering $\Delta$ and $E^3_4$ as modular forms of $\Gamma_0(N)$, 
we have $\supp{(\mathfrak c'_{\Delta})} \cap \supp{(\mathfrak c'_{E^3_4})}=\emptyset $. In particular, a four--dimensional 
subspace $W\subset M_{12}(\Gamma_0(N))$ spanned by $\Delta, E^3_4, \Delta(N\cdot ),$ and $E^3_4(N\cdot )$ 
separates points on $X_0(N)$.
\end{Lem}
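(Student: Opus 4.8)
The plan is to compute the divisors $\mathrm{div}(\Delta)$ and $\mathrm{div}(E^3_4)$ on $X_0(N)$, read off $\mathfrak c'_{\Delta}$ and $\mathfrak c'_{E^3_4}$ from Lemma \ref{prelim-1}(vi), and observe that their supports are disjoint. The key point is that $\mathrm{div}(\Delta)$, computed on $\Gamma_0(N)$, is supported entirely at the cusps (by Lemma \ref{appmod-2}, first formula), while $\mathrm{div}(E^3_4)$ is supported entirely on the $SL_2(\mathbb Z)$--orbit of $\epsilon=(1+\sqrt{-3})/2$ (by Lemma \ref{appmod-2}, third formula), which consists of elliptic or ordinary points but never cusps. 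So first I would invoke Lemma \ref{appmod-2} to write
$$
\mathrm{div}(\Delta)=\sum_{p/q\in C_N}\frac{N}{k}\frac{1}{(k,N/k)}\,\mathfrak a_{p/q},\qquad
\mathrm{div}(E^3_4)=\sum_{\gamma}m_\gamma\,\mathfrak a_{\gamma.\epsilon},
$$
and note that since $\Delta$ is a cusp form, $\mathfrak c'_{\Delta}$ differs from $\mathrm{div}(\Delta)$ only at elliptic points, hence is still supported among the cusps; and since, as already observed in the proof of Lemma \ref{appmod-2}, $\mathfrak c'_{E^3_4}=\mathrm{div}(E^3_4)$, its support lies in the image of the $SL_2(\mathbb Z)$--orbit of $\epsilon$. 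As a cusp is never $SL_2(\mathbb Z)$--equivalent to an interior point, the two supports are disjoint.

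Next I would deduce the ``in particular'' claim. To show $W$ separates points on $X_0(N)$, I need, for each $\mathfrak a\in X_0(N)$, a form $F\in W$ with $\mathfrak c'_F(\mathfrak a)=0$. Since $\supp(\mathfrak c'_{\Delta})\cap\supp(\mathfrak c'_{E^3_4})=\emptyset$, at least one of $\Delta$, $E^3_4$ already does the job: if $\mathfrak a\notin\supp(\mathfrak c'_{\Delta})$ take $F=\Delta$, otherwise $\mathfrak a\notin\supp(\mathfrak c'_{E^3_4})$ and take $F=E^3_4$. This uses only the two spanning vectors $\Delta,E^3_4\in W$, so the conclusion holds a fortiori for the four--dimensional $W$. (The definition of ``separates points'' for $W\not\subset S_m(\Gamma)$ uses $\mathfrak c'_f$, which is the relevant notion here since $E^3_4\notin S_{12}(\Gamma_0(N))$.)

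The only real content is the disjointness of supports, and the main obstacle there is simply making sure the bookkeeping across $\Gamma_0(N)$, $\Gamma'_0(N)$ and $SL_2(\mathbb Z)$ is correct — i.e., that when we pull $\Delta$ and $E^3_4$ back to $\Gamma_0(N)$ as in Section \ref{appmod}, the zeros of $E^3_4$ genuinely stay on interior points and do not migrate to cusps under the change of level. This is already handled by Lemma \ref{appmod-2} (in particular by the remark there that the only zeros of $E^3_4$ on $\Gamma_0(N)$ lie in the $SL_2(\mathbb Z)$--orbit of $\epsilon$, together with $\mathfrak c'_{E^3_4}=\mathrm{div}(E^3_4)$ because $12/2\cdot(1-1/3)=4\in\mathbb Z$), so the proof should be short: cite Lemma \ref{appmod-2}, note the two supports sit in disjoint subsets of $X_0(N)$ (cusps versus non--cusps), and conclude separation of points as above.
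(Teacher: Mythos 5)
Your proof is correct and follows the same route as the paper: it reads off $\text{div}(\Delta)$ and $\text{div}(E^3_4)$ from Lemma \ref{appmod-2}, uses Lemma \ref{prelim-1}(vi) to pass to $\mathfrak c'_{\Delta}$ and $\mathfrak c'_{E^3_4}$, and concludes disjointness because one support consists of cusps and the other of interior points. You also spell out the ``in particular'' deduction (for each $\mathfrak a$ one of $\Delta$, $E^3_4$ has $\mathfrak c'$ vanishing there), which the paper leaves implicit.
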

\begin{proof} We use the expressions for $\text{div}{(\Delta)}$ and $\text{div}{(E^3_4)}$ from Lemma \ref{appmod-2}, 
and the definition of $\mathfrak c'_{\Delta}$ and $\mathfrak c'_{E^3_4}$ Lemma \ref{prelim-1} (vi)
to see that $\supp{(\mathfrak c'_{\Delta})} \cap \supp{(\mathfrak c'_{E^3_4})}=\emptyset $.
\end{proof}

\vskip .2in

\begin{Cor}\label{exam-3} Let $N\ge 2$. 
Then, there exists infinitely many  $(\alpha, \beta, \gamma, \delta)\in \mathbb Z^4$ such that 
$X_0(N)$ is birational with 
$\cal C(\Delta,  E^3_4,  \alpha\Delta+ \beta E^3_4 +\gamma \Delta(N\cdot )+ \delta E^3_4(N\cdot ))$,
and 
$$
\deg{\cal C(\Delta,  E^3_4,  \alpha\Delta+ \beta E^3_4 +\gamma \Delta(N\cdot )+ \delta E^3_4(N\cdot ))}=\psi(N).
$$
\end{Cor}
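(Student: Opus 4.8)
The plan is to deduce Corollary \ref{exam-3} directly from Theorem \ref{p2-thm} applied to the four-dimensional subspace $W\subset M_{12}(\Gamma_0(N))$ spanned by $\Delta, E^3_4, \Delta(N\cdot),$ and $E^3_4(N\cdot)$, together with the integrality bookkeeping already used in the proof of Corollary \ref{exist-1}. First I would record that $W$ is genuinely four-dimensional (Lemma \ref{exam-1}) and that $W\not\subset S_{12}(\Gamma_0(N))$, since $E^3_4\in W$ is not cuspidal. I would then check that $W$ satisfies the two hypotheses of Theorem \ref{p2-thm}: that $W$ generates $\mathbb C(\mathfrak R_{\Gamma_0(N)})$ and that $W$ separates points of $X_0(N)$. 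Separation of points is exactly Lemma \ref{exam-2}, via $\supp(\mathfrak c'_\Delta)\cap\supp(\mathfrak c'_{E^3_4})=\emptyset$. Generation of the function field follows because $E^3_4(N\cdot)/\Delta(N\cdot) = j(N\cdot)$ and $E^3_4/\Delta = j$ both lie among the quotients $f_i/f_0$ with $f_0=\Delta$, and $\mathbb C(X_0(N))$ is generated by $j$ and $j(N\cdot)$ — the classical fact recalled in the introduction around \eqref{mmap}.

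Next I would apply Theorem \ref{p2-thm} with this $W$ and with the linearly independent pair $f=\Delta$, $g=E^3_4$. The theorem produces a non-empty Zariski open set $\mathcal U\subset W$ so that for every $h\in\mathcal U$ the curve $\cal C(\Delta, E^3_4, h)$ is birationally equivalent to $X_0(N)$ and has degree $\dim M_{12}(\Gamma_0(N)) + g(\Gamma_0(N)) - 1$, the value relevant since $W\not\subset S_{12}(\Gamma_0(N))$. By Lemma \ref{prelim-1}(v), $\dim M_{12}(\Gamma_0(N)) = \dim S_{12}(\Gamma_0(N)) + \nu_\infty(\Gamma_0(N))$, and a degree-of-divisor computation — identical in spirit to the one carried out for $\Delta$ in Section \ref{appmod} — gives $\dim M_{12}(\Gamma_0(N)) + g(\Gamma_0(N)) - 1 = [SL_2(\mathbb Z):\Gamma_0(N)] = \psi(N)$; in fact one can quote Lemma \ref{prelim-1}(iv) applied with $m=12$ directly, since the degree of $\text{div}(\Delta)$ is $[SL_2(\mathbb Z):\Gamma_0(N)]$ and this equals $\dim M_{12}+g-1$ by part (vi). That pins down the degree as $\psi(N)$.

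It remains to see that $\mathcal U$ contains infinitely many points with integral coordinates $(\alpha,\beta,\gamma,\delta)\in\mathbb Z^4$, where $h = \alpha\Delta + \beta E^3_4 + \gamma\Delta(N\cdot) + \delta E^3_4(N\cdot)$. Since $\Delta, E^3_4, \Delta(N\cdot), E^3_4(N\cdot)$ all have integral $q$-expansions, the lattice $\mathbb Z^4$ inside $W\cong\mathbb C^4$ (using this basis) is Zariski dense, so $\mathcal U\cap\mathbb Z^4$ is infinite — this is the same density argument used in the proof of Corollary \ref{exist-1}. Finally, for any such $h$, the homogeneous polynomial cutting out $\cal C(\Delta, E^3_4, h)$ can be taken with integral coefficients: substituting the integral $q$-expansions into the defining equation yields a homogeneous linear system over $\mathbb Z$ whose solution space is one-dimensional, hence spanned by an integral vector — again verbatim the reasoning of Corollary \ref{exist-1}. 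I do not expect a serious obstacle here; the only point requiring a little care is the verification that $W$ generates the function field (as opposed to merely some subfield), which is where the classical $j$, $j(N\cdot)$ description is essential, and the confirmation that the relevant degree is the $M$-version $\dim M_{12}+g-1$ rather than the $S$-version, which is forced by $E^3_4\notin S_{12}(\Gamma_0(N))$.
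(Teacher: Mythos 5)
Your proposal is correct and follows essentially the same route as the paper: verify that $W=\mathrm{span}(\Delta, E^3_4, \Delta(N\cdot), E^3_4(N\cdot))$ separates points (Lemma \ref{exam-2}) and generates $\mathbb C(X_0(N))$ (via $j$ and $j(N\cdot)$), apply Theorem \ref{p2-thm} with the Zariski density of $\mathbb Z^4$, and identify $\dim M_{12}(\Gamma_0(N))+g(\Gamma_0(N))-1$ with $\psi(N)$ by the degree computation from Section \ref{appmod}. Your observation that for $m=12$ the elliptic corrections in Lemma \ref{prelim-1}(vi) vanish, so the degree equals $\deg(\mathrm{div}(\Delta))=[SL_2(\mathbb Z):\Gamma_0(N)]$, is a clean way to carry out the computation the paper leaves to the reader.
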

\begin{proof} We observe that $W\not\subset S_{12}(\Gamma_0(N))$. Next, $W$ separates points on $X_0(N)$ by Lemma
\ref{exam-2}. It also generates the field of rational functions on $X_0(N)$ (see the Introduction).
This shows that we can apply  Theorem \ref{p2-thm}. We just observe that $\mathbb Z^4$ is Zariski dense in 
 $\mathbb C^4$, and that the same holds for a complement of a  finite subset in $\mathbb Z^4$.
The curves are of degree $\dim M_{12}(\Gamma_0(N))+ g(\Gamma_0(N))-1$. 
Finally, we compute $\dim M_{12}(\Gamma_0(N))+ g(\Gamma_0(N))-1$ using similar computations to those made  
at the end of Section \ref{appmod}. We leave details to the reader.
\end{proof}

\vskip .2in

The following result is an improvement of Theorem \ref{p2-thm}:
 
\begin{Thm}\label{exam-4} Let $N\ge 2$. 
Then, there exists infinitely many  pairs $(\alpha, \beta)\in \mathbb Z^2$ such that 
$X_0(N)$ is birational with 
$\cal C(\Delta,  E^3_4,  \alpha \Delta(N\cdot )+ \beta E^3_4(N\cdot ))$,
and 
$$
\deg{\cal C(\Delta,  E^3_4,  \alpha \Delta(N\cdot )+ \beta E^3_4(N\cdot ))}=\psi(N).
$$
\end{Thm}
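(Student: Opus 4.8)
The plan is to re-run the proofs of Lemma \ref{p2-8} and Theorem \ref{p2-thm} with the four--dimensional subspace $W\subset M_{12}(\Gamma_0(N))$ spanned by $\Delta,\,E^3_4,\,\Delta(N\cdot),\,E^3_4(N\cdot)$ --- which is linearly independent (Lemma \ref{exam-1}), generates $\mathbb C(X_0(N))$ (Introduction), and separates points (Lemma \ref{exam-2}) --- taking $f=\Delta$ and $g=E^3_4$, but keeping $h$ inside the two--dimensional subspace $\mathbb C\,\Delta(N\cdot)\oplus\mathbb C\,E^3_4(N\cdot)$ rather than all of $W$. The reason this sharper conclusion is available is that the extra components along $f_0=\Delta$ and $f_1=E^3_4$ which Lemma \ref{p2-8} allows for $h$ are used there only to force the support condition $\supp(\mathfrak{c}'_{f})\cap\supp(\mathfrak{c}'_{h})=\emptyset$; here one never needs it, because $\supp(\mathfrak{c}'_{\Delta})\cap\supp(\mathfrak{c}'_{E^3_4})=\emptyset$ already (Lemma \ref{exam-2}), so $\min\bigl(\mathfrak{c}'_{\Delta}(\mathfrak a),\mathfrak{c}'_{E^3_4}(\mathfrak a),\mathfrak{c}'_{h}(\mathfrak a)\bigr)=0$ at every $\mathfrak a\in X_0(N)$, no matter what $h$ is.

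For birationality, set $K=\mathbb C\bigl(E^3_4/\Delta\bigr)=\mathbb C(j)$ and $L=\mathbb C(X_0(N))$. Since $j(N\cdot)=\bigl(E^3_4(N\cdot)/\Delta\bigr)\big/\bigl(\Delta(N\cdot)/\Delta\bigr)$, the field $K\bigl(\Delta(N\cdot)/\Delta,\ E^3_4(N\cdot)/\Delta\bigr)$ contains $\mathbb C\bigl(j,\,j(N\cdot)\bigr)=L$ and hence equals $L$; moreover $[L:K]=[SL_2(\mathbb Z):\Gamma_0(N)]<\infty$ and, the characteristic being zero, $L/K$ is separable. The primitive--element portion of the proof of Lemma \ref{p2-8} then applies verbatim with $f_2=\Delta(N\cdot)$, $f_3=E^3_4(N\cdot)$: forming the characteristic polynomial $P(X,\alpha,\beta)$ of multiplication by $\bigl(\alpha\Delta(N\cdot)+\beta E^3_4(N\cdot)\bigr)/\Delta$ on $L$ over $K$ and the resultant $R(\alpha,\beta)$ of $P$ with $\partial_X P$, one obtains a polynomial $R\not\equiv 0$ such that $R(\alpha,\beta)\neq 0$ forces $\bigl(\alpha\Delta(N\cdot)+\beta E^3_4(N\cdot)\bigr)/\Delta$ to generate $L$ over $K$. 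Hence for $(\alpha,\beta)$ in the nonempty Zariski--open set $\mathcal U_0=\{R\neq 0\}\setminus\{(0,0)\}\subset\mathbb C^2$, putting $h=\alpha\Delta(N\cdot)+\beta E^3_4(N\cdot)$ we have: $\Delta,E^3_4,h$ linearly independent (Lemma \ref{exam-1}), so $\cal C(\Delta,E^3_4,h)$ is an irreducible plane curve (Lemma \ref{p2-1}); and $\mathbb C(X_0(N))$ is generated over $\mathbb C$ by $g/f=E^3_4/\Delta$ and $h/f$, whence $\varphi$ is birational. This gives the birationality assertion.

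For the degree, apply Theorem \ref{p2-120} to the birational $\varphi$: then $\deg(\varphi)=1$ and
\[
\deg\cal C(\Delta,E^3_4,h)=\dim M_{12}(\Gamma_0(N))+g(\Gamma_0(N))-1-\sum_{\mathfrak a\in X_0(N)}\min\bigl(\mathfrak{c}'_{\Delta}(\mathfrak a),\mathfrak{c}'_{E^3_4}(\mathfrak a),\mathfrak{c}'_{h}(\mathfrak a)\bigr),
\]
and the sum is $0$ by the remark in the first paragraph. It remains to identify $\dim M_{12}(\Gamma_0(N))+g(\Gamma_0(N))-1$ with $\psi(N)$: by Lemma \ref{prelim-1}(iv),(v) this number equals $\deg(\text{div}(\Delta))=12(g(\Gamma_0(N))-1)+6\nu_\infty(\Gamma_0(N))+3\nu_2(\Gamma_0(N))+4\nu_3(\Gamma_0(N))=[SL_2(\mathbb Z):\Gamma_0(N)]=\psi(N)$, which is exactly the computation already carried out in Section \ref{appmod}. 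Finally, since a complement of a finite subset of $\mathbb Z^2$ is still Zariski--dense in $\mathbb C^2$, the open set $\mathcal U_0$ contains infinitely many integral pairs (otherwise its closed complement, containing a Zariski--dense set, would be all of $\mathbb C^2$), and each such pair works.

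The only point genuinely new compared with Theorem \ref{p2-thm} is the identity $K\bigl(\Delta(N\cdot)/\Delta,\ E^3_4(N\cdot)/\Delta\bigr)=\mathbb C(X_0(N))$, i.e.\ that restricting $h$ to $\mathbb C\,\Delta(N\cdot)\oplus\mathbb C\,E^3_4(N\cdot)$ still suffices to generate the function field, together with the observation that the support--disjointness which the clean degree formula needs is automatic here. Neither is delicate, so I do not anticipate a real obstacle; the bookkeeping with resultants and the dimension count are exactly as in Lemma \ref{p2-8} and Section \ref{appmod}.
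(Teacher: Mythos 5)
Your proposal is correct and follows essentially the same route as the paper: restrict $h$ to $\mathbb C\,\Delta(N\cdot)\oplus\mathbb C\,E^3_4(N\cdot)$, reuse only the resultant/primitive--element portion of the proof of Lemma \ref{p2-8} (the support--disjointness portion being unnecessary because Lemma \ref{exam-2} already gives $\supp{(\mathfrak c'_{\Delta})}\cap\supp{(\mathfrak c'_{E^3_4})}=\emptyset$, which kills the correction sum in Theorem \ref{p2-120}), and then identify $\dim M_{12}(\Gamma_0(N))+g(\Gamma_0(N))-1$ with $\psi(N)$ via the computation of Section \ref{appmod} and conclude with the Zariski density of $\mathbb Z^2$. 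You merely spell out details the paper leaves implicit, notably the identity $\mathbb C(j)\bigl(\Delta(N\cdot)/\Delta,\,E^3_4(N\cdot)/\Delta\bigr)=\mathbb C(X_0(N))$.
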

\begin{proof} For $W$ defined by Lemma \ref{exam-2}, we let 
$f_0=\Delta$,  $f_1=E^3_4$,  $f_2=\Delta(N\cdot )$, $f_3= E^3_4(N\cdot )$. This is a basis of $W$ in the notation of 
the proof of Lemma \ref{p2-8}. In our case the resultant $R(\lambda_2, \lambda_3)$ from the proof of Lemma \ref{p2-8}
is used to secure that $(\lambda_2f_2+\lambda_3f_3)/f_0$ is a primitive element of appropriate extension if and only if 
$R(\lambda_2, \lambda_3)\neq 0$ (see (\ref{p2-50})). The remainder of the proof of  Lemma \ref{p2-8} is not relevant 
for us since we have the first claim of Lemma \ref{exam-2} at our disposal: $\supp{(\mathfrak c'_{\Delta})} \cap \supp{(\mathfrak c'_{E^3_4})}=\emptyset $. With this in hand, the proof of Theorem \ref{p2-thm} is easy to modify so that with the help
of arguments in the proof of  Corollary \ref{exam-4} we can complete the proof.
\end{proof}

\end{document}